\tikzstyle{nodo}=[circle,draw,fill,inner sep=0pt,minimum size=%
\tikzstyle{infinito}=[circle,inner sep=0pt,minimum size=0mm]
\newcommand\R{{\mathbb R}}
\newcommand\N{{\mathbb N}}
\newcommand\Hmu{{\mathcal{M}_\mu}}
\newcommand\LL{\mathcal L}
\newcommand\DD{\mathcal D}
\newcommand\Sf{\mathcal S}
\newcommand\f{\frac}
\newcommand\dx{{\,dx}}
\newcommand\dy{{\,dy}}
\newcommand\ee{{\mathcal E}}
\newcommand\JJ{{\mathcal J}}
\newcommand\MM{{\mathcal M}}
\newcommand\NN{\mathcal N}
\newcommand\eps{\varepsilon}
\newcommand\NL{{\mathcal N}_\lambda}
\newcommand{\alevel}{{\widetilde J_\lambda}}
\newcommand{\elevel}{{\widetilde E}}
\newcommand{\interv}{{(-\lambda_\Omega, +\infty)}}
\theoremstyle{definition}
\theoremstyle{plain}
\newtheorem{theorem}{Theorem}[section]
\newtheorem{proposition}[theorem]{Proposition}
\newtheorem{lemma}[theorem]{Lemma}
\newtheorem{corollary}[theorem]{Corollary}
\newcounter{ass}
\newtheorem{assumption}[ass]{Assumption}
\theoremstyle{remark}
\newtheorem{remark}[theorem]{Remark}
\newtheorem*{remark*}{Remark}
\theoremstyle{definition}
\newtheorem{definition}[theorem]{Definition}
\date{}
\title{Action versus energy ground states \\in nonlinear Schr\"odinger equations}
\author{Simone Dovetta$^1$, Enrico Serra$^2$, Paolo Tilli$^2$ \\ \ \\
	{\small$^1$ Dipartimento di Scienze di Base ed Applicate per l'Ingegneria, Universit\`a degli Studi di Roma ``La Sapienza"} \\ {\small Via Antonio Scarpa, 14, 00161 Roma, Italy}
	\\ \ \\{\small$^2$Dipartimento di Scienze
		Matematiche ``G.L. Lagrange'', Politecnico di Torino } \\ {\small
		Corso Duca degli Abruzzi, 24, 10129 Torino, Italy}}
\begin{document}

\maketitle

\begin{abstract} We investigate the relations between normalized critical points of the nonlinear Schr\"odinger energy functional and critical points of the corresponding action functional on the associated Nehari manifold.
Our first general result is that the ground state levels are strongly related by the following duality result: the (negative) energy ground state level  is the Legendre--Fenchel transform of the action ground state level. Furthermore, whenever an energy ground state exists at a certain frequency, then all action ground states with that frequency have the same mass and are energy ground states too. We prove that the converse is in general false and that the action ground state  level may fail to be convex. Next we analyze the differentiability of the ground state action level  and we provide an explicit expression involving the mass of action ground states. Finally we show that similar results hold also for local minimizers. 

 \end{abstract}

\noindent{\small AMS Subject Classification: 35Q55, 49J40, 58E30.
}
\smallskip

\noindent{\small Keywords: nonlinear Schr\"odinger, ground states, action, energy, Nehari manifold, mass constraint, minimization}

\section{Introduction and main results}
\label{sec:intro}

This paper is devoted to the relation between \emph{action} ground states and \emph{energy} ground states of the nonlinear Schr\"odinger (NLS) equation
\begin{equation}
	\label{nlseq}
	\Delta u+|u|^{p-2}u=\lambda u\quad\text{in $\Omega$}
\end{equation}
where  $\lambda$ is a real parameter,   $\Omega$ is a (possibly unbounded) open subset of $\R^N$ and
$u\in H_0^1(\Omega)$ (most of our methods and results, however, are rather general and remain 
valid in other settings, such as Neumann boundary conditions, or even the NLS equation
on metric graphs).   Here and throughout the paper, without further warning, we will always
assume that the exponent $p$ satisfies
\begin{equation}\label{rangep}
p\in (2,2^*), \quad 2^*=\frac {2N}{N-2}\quad\text{($2^*=\infty$ if $N=1,2$),}
\end{equation}
which allows for a standard definition of weak solutions.

Since the seminal papers \cite{BPL83, BPL83bis, Est83,EPL82, strauss}, the literature on semilinear scalar field equations (with \eqref{nlseq} as a prototype) has grown enormously and, with no pretence of being exhaustive,
we just refer the reader to the monograph \cite{cazenave} for a comprehensive discussion of the NLS equation on $\R^N$, and for instance to \cite{FS17,FW21,RRS20} (and references therein) for some of the most recent developments.

The existence of positive solutions to \eqref{nlseq} can be addressed by variational methods in
at least two different ways,
either by minimizing the
{\em action functional} $J_\lambda:H_0^1(\Omega)\to\R$
\begin{equation}
	\label{def J}
	J_\lambda (u) := \frac12\|\nabla u\|_{L^2(\Omega)}^2 + \frac\lambda2 \|u\|_{L^2(\Omega)}^2 -\frac1p\|u\|_{L^p(\Omega)}^p
\end{equation}
on the associated 
{\em Nehari manifold}
\begin{equation}
	\label{def NL}
	\begin{split}
	\NL = &\left\{u\in H_0^1(\Omega) \setminus \{0\}\; : \; J_\lambda'(u)u = 0\right\}\\
	=  &\left\{u\in H_0^1(\Omega) \setminus \{0\}\; : \; \|\nabla u\|_{L^2(\Omega)}^2 + \lambda\|u\|_{L^2(\Omega)}^2 = \|u\|_{L^p(\Omega)}^p\right\},
		\end{split}
\end{equation}
or by minimizing the {\em energy functional} $E:H_0^1(\Omega)\to\R$
\begin{equation}
\label{def E}
E(u)= \frac12\|\nabla u\|_{L^2(\Omega)}^2-\frac1p\|u\|_{L^p(\Omega)}^p
\end{equation}
on the manifold of mass-constrained functions 
\begin{equation}
\label{Mmu}
\Hmu=\left\{u\in H_0^1(\Omega)\,:\,\frac12\|u\|_{L^2(\Omega)}^2=\mu\right\}.
\end{equation}

\begin{definition}[Ground states] With the notation introduced above,
\begin{itemize}
\item[1)] given $\lambda\in\R$, a function $u\in \NL$ is called an \emph{action ground state} if 
$J_\lambda(u)=\JJ(\lambda)$, where
\begin{equation}
\label{def JJ}
\JJ(\lambda):=\inf_{v\in\NL}J_\lambda(v),
\end{equation}
and $\JJ(\lambda)$ is called the {\em action ground state level};

\item [2)] given $\mu\geq 0$, a function $u\in \Hmu$ is called an \emph{energy ground state} if 
$E(u)=\ee(\mu)$, where 
\begin{equation}
\label{def EE}
\ee(\mu):=\inf_{v\in\Hmu}E(v),
\end{equation}
and $\ee(\mu)$ is called the {\em energy ground state level}.
\end{itemize}
\end{definition}

It is well known that,
due to
the  form of $J_\lambda$,  
 an \emph{action} ground state will solve \eqref{nlseq} 
  because $u\in \NL$ is a ``natural constraint'' for $J_\lambda$, i.e. 
any constrained critical point of $J_\lambda$ is in fact a genuine critical point in $H^1_0(\Omega)$. 
This approach is quite natural when one is interested in solutions of \eqref{nlseq}
having a prescribed ``frequency'' $\lambda$ (for a general discussion on the method of Nehari manifold see \cite{SW}). 

On the other hand,  an \emph{energy} ground state $u$ of prescribed mass $\mu$ will solve \eqref{nlseq}
with $\lambda$ as a Lagrange multiplier due to the mass constraint.
These solutions are usually referred to as {\em normalized} (or having \emph{prescribed mass}), 
and in this case the frequency $\lambda$ is not known a priori.
Contrary to critical points of the action functional, the analysis of normalized solutions is relatively recent. Starting from the original paper \cite{Jean97}, this topic  is nowadays a well-developed
research line (see for instance \cite{ADST19,AST16,AST17, BV13, BS17,BBJV17,DT19,IM20,NP20,NTV14,PPVV21,PSV20,PV17} and references therein). 

More generally, in both cases, besides ground states one may also look at (constrained) critical
points,   but
in any case
these
two approaches are clearly intertwined, since any critical point 
$u\in\NL$ of $J_\lambda$ is also a critical point of $E$ in $\MM_{\mu}$
(where $\mu$ is the mass of $u$) and, conversely, any critical point $u\in\MM_\mu$ of $E$ 
is also a critical point of $J_{\lambda}$ in $\mathcal{N}_{\lambda}$ (where
$\lambda$ is the Lagrange multiplier of $u$ that pops up in \eqref{nlseq}).

Despite these  relationships, however, the precise interplay between the ``action approach''
and the ``energy approach'' (in particular, 
the question whether an action ground state is necessarily also an energy ground state,
or the other way round, etc.)
has not been thoroughly investigated yet, and the present
paper aims at taking a first step in this direction.  

Our first general result is that the ``ground state levels'' defined in \eqref{def JJ} and \eqref{def EE}
are strongly related by the following duality result.
\begin{theorem}
\label{THM:dualita}
The (negative) energy ground state level $-\,\ee(\mu)$ is the 
Legendre--Fenchel transform of the action ground state level $\JJ(\lambda)$, that is
\begin{equation}
\label{leg_E}
-\ee(\mu)=\JJ^*(\mu):=\sup_{\lambda \in\R}\left(\lambda\mu-\JJ(\lambda)\right)
\qquad\forall \mu\geq 0.
\end{equation}
\end{theorem}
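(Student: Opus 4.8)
The starting point I would use is the elementary identity
\begin{equation*}
J_\lambda(u)=E(u)+\lambda\, m(u),\qquad m(u):=\tfrac12\|u\|_{L^2(\Omega)}^2,
\end{equation*}
valid for \emph{every} $u\in H_0^1(\Omega)$ and immediate from \eqref{def J} and \eqref{def E}. With it, proving \eqref{leg_E} reduces to establishing, for each fixed $\mu\ge0$, the two one-sided bounds $\sup_\lambda(\lambda\mu-\JJ(\lambda))\le-\ee(\mu)$ and $\sup_\lambda(\lambda\mu-\JJ(\lambda))\ge-\ee(\mu)$ separately.

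For the upper bound I would exploit the scaling (fibering) structure of the Nehari manifold. Fix $\lambda$ and $u\in\NL$. Since on $\NL$ one has $\|\nabla u\|_{L^2(\Omega)}^2+\lambda\|u\|_{L^2(\Omega)}^2=\|u\|_{L^p(\Omega)}^p>0$, the map $t\mapsto J_\lambda(tu)=\frac{t^2}{2}\bigl(\|\nabla u\|_{L^2(\Omega)}^2+\lambda\|u\|_{L^2(\Omega)}^2\bigr)-\frac{t^p}{p}\|u\|_{L^p(\Omega)}^p$ is of the form $at^2-bt^p$ with $a,b>0$ and $p>2$, hence attains its maximum over $t>0$ exactly at the Nehari point $t=1$. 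Choosing instead the dilation $t_0=\sqrt{\mu/m(u)}$, which rescales $u$ to mass $\mu$ so that $t_0u\in\Hmu$, the identity above gives $J_\lambda(t_0u)=E(t_0u)+\lambda\mu\ge\ee(\mu)+\lambda\mu$, while maximality at $t=1$ gives $J_\lambda(u)\ge J_\lambda(t_0u)$. Thus $J_\lambda(u)\ge\ee(\mu)+\lambda\mu$ for every $u\in\NL$; taking the infimum over $\NL$ yields $\JJ(\lambda)\ge\ee(\mu)+\lambda\mu$, that is $\lambda\mu-\JJ(\lambda)\le-\ee(\mu)$, for every $\lambda\in\R$ (the case $\NL=\emptyset$, where $\JJ(\lambda)=+\infty$, being trivial).

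For the reverse inequality I would, for each $\mu$, exhibit a frequency that makes the supremum essentially sharp. Given $\eps>0$, pick $v\in\Hmu$ with $E(v)\le\ee(\mu)+\eps$. Since $v\ne0$, the single scalar $\lambda_v:=\bigl(\|v\|_{L^p(\Omega)}^p-\|\nabla v\|_{L^2(\Omega)}^2\bigr)/\|v\|_{L^2(\Omega)}^2$ is precisely the value for which the Nehari identity holds, so $v\in\mathcal N_{\lambda_v}$. Then $\JJ(\lambda_v)\le J_{\lambda_v}(v)=E(v)+\lambda_v\mu$, whence $\lambda_v\mu-\JJ(\lambda_v)\ge-E(v)\ge-\ee(\mu)-\eps$; letting $\eps\to0$ gives $\sup_\lambda(\lambda\mu-\JJ(\lambda))\ge-\ee(\mu)$. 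The same construction with $E(v)\to-\infty$ covers the degenerate case $\ee(\mu)=-\infty$, and $\mu=0$ is immediate since $\JJ\ge0$ on $\NL$.

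Combining the two bounds yields \eqref{leg_E}. The only genuinely delicate point is the upper bound: because the constraint $\NL$ itself depends on $\lambda$, one cannot simply send $\lambda\to\pm\infty$ against a fixed competitor, and it is exactly the scaling along rays --- maximality of $J_\lambda$ at the Nehari point, together with the freedom to rescale any $u\in\NL$ to the prescribed mass --- that converts the mismatch between the frequency $\lambda$ and the mass $\mu$ into the correct inequality. I expect no compactness or regularity input to be needed: the argument is purely variational and uses only the homogeneity of the three terms in $J_\lambda$ and $E$, so it should transfer verbatim to the other settings (Neumann conditions, metric graphs) mentioned above.
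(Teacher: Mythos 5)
Your proof is essentially the paper's own: the upper bound is precisely Proposition \ref{prop_pasqua} (maximality of $t\mapsto J_\lambda(tu)$ at the Nehari point $t=1$, followed by rescaling $u$ to mass $\mu$), and the lower bound via the frequency $\lambda_v$ that places a near-minimizer $v\in\Hmu$ on $\mathcal N_{\lambda_v}$ is exactly the minimizing-sequence argument in Case 2 of the paper's proof. One point where you genuinely improve on the paper: you handle the degenerate case $\ee(\mu)=-\infty$ by applying the same $\lambda_v$ construction to a sequence with $E(v_n)\to-\infty$, which avoids the paper's detour through the asymptotics of $\JJ(\lambda)/\lambda$ in Lemma \ref{lem:J}$(iii)$ (and hence the construction of concentrating test functions) for that purpose.

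The one real gap is the endpoint $\mu=0$. There $-\ee(0)=0$ and $\JJ^*(0)=-\inf_{\lambda\in\R}\JJ(\lambda)$, so the inequality ``$\JJ\geq 0$'' that you invoke only yields $\JJ^*(0)\leq 0$; to conclude you also need $\inf_{\lambda}\JJ(\lambda)=0$, and your $\lambda_v$ construction is unavailable since $\MM_0=\{0\}$ and $0\notin\NL$. This is not automatic: it is Lemma \ref{lem:J}$(i)$, i.e.\ $\JJ(\lambda)=0$ for $\lambda<-\lambda_\Omega$, which the paper proves by testing with perturbations $\varphi_1+\eps\varphi_2$ of Dirichlet eigenfunctions on a bounded subdomain $\Omega'\subset\Omega$ with $\lambda_{\Omega'}=-\lambda$. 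You should either cite that fact or reproduce such a construction; with it, your argument is complete.
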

 The fact that \eqref{leg_E} holds for $\mu\geq0$ only is by no means restrictive, as $\JJ^*(\mu)=+\infty$ for every $\mu<0$ (see Remark \ref{rem:m<0} below), whereas $\ee$ is not even defined for negative masses.
Note that \eqref{leg_E} is valid in full generality, regardless for what $\lambda$ or $\mu$ the infima in \eqref{def JJ} and \eqref{def EE} are attained, and even regardless the finiteness of $\ee(\mu)$
(notice that, while at this level of generality $\ee(\mu)$ may take the value $-\infty$, in any
case $\JJ(\lambda)\geq 0$ because $J_\lambda(u)=(\frac 1 2 -\frac 1 p)\Vert u\Vert_{L^p(\Omega)}^p$
for every $u\in \NL$).

When an energy ground state exists, however, it is {\em always} an action ground state. More precisely, we have the following result.
\begin{theorem}
\label{THM:global}
Given $\mu> 0$, assume $u\in \Hmu$ is an energy ground state of mass $\mu$, 
and let 
$\lambda$ be the Lagrange multiplier associated with $u$ in \eqref{nlseq}.
Then $u$ is also an action ground state on $\NL$. Moreover, 
any other action ground state $v\in \NL$ belongs to $\MM_\mu$ (i.e. $v$ has the same mass as $u$),
and $v$ is also an energy ground state on $\MM_\mu$.
\end{theorem}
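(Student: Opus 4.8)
The plan is to exploit the duality of Theorem \ref{THM:dualita} to pin down the action level at the prescribed frequency, and then to run a one–dimensional scaling argument along rays in order to force the equality of masses.

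First I would record the elementary identity $J_\lambda(w)=E(w)+\lambda\,\tfrac12\|w\|_{L^2(\Omega)}^2$, valid for every $w\in H^1_0(\Omega)$; in particular $J_\lambda(w)=E(w)+\lambda\mu$ whenever $w\in\Hmu$. Since $u$ solves \eqref{nlseq} with frequency $\lambda$, testing the equation against $u$ gives $\|\nabla u\|_{L^2(\Omega)}^2+\lambda\|u\|_{L^2(\Omega)}^2=\|u\|_{L^p(\Omega)}^p$, i.e. $u\in\NL$, so that $J_\lambda(u)\ge\JJ(\lambda)$. On the other hand, the Legendre--Fenchel inequality contained in Theorem \ref{THM:dualita} yields $-\ee(\mu)\ge\lambda\mu-\JJ(\lambda)$, that is $\JJ(\lambda)\ge\ee(\mu)+\lambda\mu=E(u)+\lambda\mu=J_\lambda(u)$. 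Combining the two inequalities forces $J_\lambda(u)=\JJ(\lambda)$, so $u$ is an action ground state, and at the same time produces the sharp identity
\begin{equation}\label{star}
\JJ(\lambda)=\ee(\mu)+\lambda\mu .
\end{equation}
This is the analytic heart of the statement: it says that the supporting line of slope $\lambda$ touches the graph of $-\ee$ at $\mu$.

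Then I would turn to an arbitrary action ground state $v\in\NL$, of mass $\mu_v:=\tfrac12\|v\|_{L^2(\Omega)}^2>0$. The key object is the fibering map $h(s):=J_\lambda(sv)$ for $s>0$. Using $v\in\NL$ (so that $\|\nabla v\|_{L^2(\Omega)}^2+\lambda\|v\|_{L^2(\Omega)}^2=\|v\|_{L^p(\Omega)}^p=:P>0$), one computes $h(s)=P\bigl(\tfrac12 s^2-\tfrac1p s^p\bigr)$, which, since $p>2$, attains its maximum over $(0,+\infty)$ at the single point $s=1$, with $h(1)=\JJ(\lambda)$. Now rescale $v$ to mass $\mu$ by setting $s_\mu:=\sqrt{\mu/\mu_v}$, so that $s_\mu v\in\Hmu$. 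On one hand $E(s_\mu v)\ge\ee(\mu)$, whence by \eqref{star}
\begin{equation*}
h(s_\mu)=J_\lambda(s_\mu v)=E(s_\mu v)+\lambda\mu\ge\ee(\mu)+\lambda\mu=\JJ(\lambda);
\end{equation*}
on the other hand $h(s_\mu)\le\max_{s>0}h=\JJ(\lambda)$. Therefore $h(s_\mu)=\JJ(\lambda)$, and the uniqueness of the maximizer forces $s_\mu=1$, i.e. $\mu_v=\mu$. Reading \eqref{star} once more gives $E(v)=J_\lambda(v)-\lambda\mu=\JJ(\lambda)-\lambda\mu=\ee(\mu)$, so that $v$ is an energy ground state on $\Hmu$.

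The routine points are the identity relating $J_\lambda$ and $E$ and the explicit form of the fibering map. The hard part will be the squeezing step for $h(s_\mu)$: the lower bound coming from energy minimality at mass $\mu$ and the upper bound coming from ray maximality must match \emph{exactly}, and what makes this possible is precisely that \eqref{star} supplies the common value $\JJ(\lambda)$. Once the two bounds coincide, the fact that $h$ has a unique maximum point (a consequence of $p>2$) upgrades the equality of values into the equality $s_\mu=1$ of scaling parameters, which is exactly the equality of masses. I expect the main temptation to avoid is trying to show directly that $v$ minimizes the energy at its \emph{own} mass $\mu_v$ — which need not be straightforward — and instead to route everything through the distinguished mass $\mu$, where an energy ground state is known to exist.
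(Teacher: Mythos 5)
Your proposal is correct and follows essentially the same route as the paper: the inequality $\JJ(\lambda)\ge\ee(\mu)+\lambda\mu$ you extract from Theorem \ref{THM:dualita} is exactly the paper's Proposition \ref{prop_pasqua} (after taking the infimum over $\NL$), and your fibering map $h(s)=J_\lambda(sv)$ with its unique maximum at $s=1$ is precisely the mechanism used there to prove that inequality and to characterize the equality case. The squeezing of $h(s_\mu)$ between the energy lower bound and the ray maximum, forcing $s_\mu=1$ and hence equality of masses, is the same argument the paper runs inside the proof of that proposition.
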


This reveals a certain rigidity of the variational framework with respect to energy ground states. Indeed, whenever a frequency $\lambda$ pops up as the Lagrange multiplier
of an energy ground state $u$, not only  is
$u$  also an action ground state in $\NL$, but any other action ground state $v\in\NL$
is forced to have the same mass as $u$, and is itself an energy ground state.

In view of Theorem \ref{THM:dualita} (that
entails the concavity of $\ee(\mu)$), it is natural to wonder if the duality between $\JJ$ and $\ee$ can  be reversed, by expressing the transform of $-\ee$ in terms of $\JJ$. Contrary to \eqref{leg_E}, this question is sensitive to the finiteness of $\ee$. For instance, in the $L^2$--supercritical regime $p>2+ 4/N$, since  $\ee(\mu)=-\infty$ for every $\mu>0$ (and  $\ee(0)=0$),
\[
\JJ^{**}(\lambda)= (-\ee)^*(\lambda)=\sup_{\mu\geq 0}\left(\lambda\mu+\ee(\mu)\right)=0\qquad\forall\lambda\in\R\,.
\]
As $\JJ(\lambda)>0$ for certain values of $\lambda$, it is evident that $\JJ^{**}\not\equiv\JJ$ , so that for these values of $p$ the duality in Theorem \ref{THM:dualita} goes in one direction only. As a by--product, this also shows that, in the $L^2$--supercritical regime, $\JJ$ is never a convex function.

In the $L^2$--subcritical and critical regimes, on the contrary, the situation is more involved. In this case, there always exist values of the mass for which $\ee$ is finite. Nevertheless, whether $\JJ$ coincides with $\JJ^{**}$ is not trivial only if $\JJ$ is a continuous function on $\R$. In view of Lemma \ref{lem:J} below (see also Remark \ref{rem:lomega}), this is equivalent to $\JJ(-\lambda_\Omega)=0$, where
\[
\lambda_\Omega:=\inf_{u\in H_0^1(\Omega)}\f{\|\nabla u\|_{L^2(\Omega)}^2}{\|u\|_{L^2(\Omega)}^2}
\]
denotes the bottom of the spectrum of the Dirichlet Laplacian. The validity of $\JJ(-\lambda_\Omega)=0$, without further assumptions on $\Omega$, seems however to be an open problem. 
Anyway, even in this setting it is possible to prove that the duality of Theorem \ref{THM:dualita} does not hold in the opposite direction in full generality.
\begin{theorem}
\label{THM:pasquetta}
Let $p \le 2 +4/N$ and assume that $\Omega$ has finite measure. If for some $\bar\lambda\in\R$
 there exist two action ground states  $v_1,v_2\in\mathcal{N}_{\bar\lambda}$ with different masses, then
\begin{equation}
\label{eq:pasquetta}
\JJ(\bar\lambda)>\JJ^{**}(\bar\lambda)=\sup_{\mu\geq 0}\left(\bar\lambda\mu+\ee(\mu)\right)\,.
\end{equation}
In particular, $\JJ$ is not a convex function.
\end{theorem}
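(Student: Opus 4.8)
The plan is to reduce the statement to an elementary convexity comparison by combining the Nehari (fibering) representation of $\JJ$ with the duality of Theorem \ref{THM:dualita}. For fixed $u\in H_0^1(\Omega)\setminus\{0\}$ with $\|\nabla u\|_{L^2(\Omega)}^2+\lambda\|u\|_{L^2(\Omega)}^2>0$, the fiber map $t\mapsto J_\lambda(tu)$ has a unique maximizer $t(u,\lambda)>0$, the point where the ray through $u$ meets $\mathcal N_\lambda$; writing $g_u(\lambda):=\max_{t>0}J_\lambda(tu)$ one has the standard identity $\JJ(\lambda)=\inf_{u}g_u(\lambda)$. Since $\lambda\mapsto J_\lambda(tu)$ is affine for each fixed $t$ (with slope $\tfrac12 t^2\|u\|_{L^2(\Omega)}^2$), every $g_u$ is \emph{convex} in $\lambda$, so $\JJ$ is exhibited as an infimum of convex functions — exactly the structure that leaves room for non-convexity.

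First I would record the two facts attached to the hypothesis. Let $v_1,v_2\in\mathcal N_{\bar\lambda}$ be the two action ground states, with masses $\mu_1:=\tfrac12\|v_1\|_{L^2(\Omega)}^2<\mu_2:=\tfrac12\|v_2\|_{L^2(\Omega)}^2$. Because each $v_i$ already lies on $\mathcal N_{\bar\lambda}$, one has $t(v_i,\bar\lambda)=1$, hence $g_{v_i}(\bar\lambda)=J_{\bar\lambda}(v_i)=\JJ(\bar\lambda)$; moreover $\|\nabla v_i\|_{L^2(\Omega)}^2+\bar\lambda\|v_i\|_{L^2(\Omega)}^2=\|v_i\|_{L^p(\Omega)}^p>0$, so $g_{v_i}$ is smooth in a full neighborhood of $\bar\lambda$ and, maximizing in $t$, the envelope computation gives $g_{v_i}'(\bar\lambda)=\partial_\lambda J_\lambda(t v_i)\big|_{t=1,\lambda=\bar\lambda}=\tfrac12\|v_i\|_{L^2(\Omega)}^2=\mu_i$. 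Thus $g_{v_1},g_{v_2}$ are two convex majorants of $\JJ$ touching $\JJ$ at $\bar\lambda$ with \emph{different} slopes $\mu_1\neq\mu_2$.

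The heart of the argument is then a one-sided derivative comparison. From $\JJ\le g_{v_i}$ with equality at $\bar\lambda$, dividing by $h$ and letting $h\to0^{\pm}$ gives $\limsup_{h\to0^+}\frac{\JJ(\bar\lambda+h)-\JJ(\bar\lambda)}{h}\le g_{v_1}'(\bar\lambda)=\mu_1$ and $\liminf_{h\to0^-}\frac{\JJ(\bar\lambda+h)-\JJ(\bar\lambda)}{h}\ge g_{v_2}'(\bar\lambda)=\mu_2$. Since $\mu_2>\mu_1$, the left slope strictly dominates the right slope: $\JJ$ has a downward corner at $\bar\lambda$, incompatible with convexity. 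Quantitatively, fixing $\eps\in(0,(\mu_2-\mu_1)/2)$ and a small $\delta>0$ I obtain $\JJ(\bar\lambda+\delta)\le\JJ(\bar\lambda)+(\mu_1+\eps)\delta$ and $\JJ(\bar\lambda-\delta)\le\JJ(\bar\lambda)-(\mu_2-\eps)\delta$, so the chord joining $(\bar\lambda-\delta,\JJ(\bar\lambda-\delta))$ and $(\bar\lambda+\delta,\JJ(\bar\lambda+\delta))$ lies strictly below $\JJ(\bar\lambda)$ at its midpoint.

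Finally, since $\JJ^{**}$ is the convex lower envelope of $\JJ$, it lies under every chord of $\JJ$, whence $\JJ^{**}(\bar\lambda)\le\tfrac12\big(\JJ(\bar\lambda-\delta)+\JJ(\bar\lambda+\delta)\big)<\JJ(\bar\lambda)$; rewriting $\JJ^{**}(\bar\lambda)=\sup_{\mu\ge0}\big(\bar\lambda\mu+\ee(\mu)\big)$ through Theorem \ref{THM:dualita} yields precisely \eqref{eq:pasquetta}, while the downward corner gives the non-convexity of $\JJ$. I expect the only genuinely delicate points to be the rigorous justification of the fibering identity $\JJ=\inf_u g_u$ and of the derivative formula $g_{v_i}'(\bar\lambda)=\mu_i$; the hypotheses $p\le 2+4/N$ and $|\Omega|<\infty$ are what place us in the regime where $\ee$ is finite and $\JJ^{**}$ is the nontrivial object of interest (unlike the supercritical case, already settled) and where the Nehari framework and the existence of ground states are in force. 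Conceptually, the statement is the contrapositive of the rigidity in Theorem \ref{THM:global}: distinct masses are forbidden exactly at the frequencies where $\JJ$ coincides with its convex envelope.
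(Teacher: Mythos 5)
Your proof is correct, but it follows a genuinely different route from the paper's. The paper argues by contradiction through the duality machinery: if $\JJ(\bar\lambda)=\JJ^{**}(\bar\lambda)$, then (by Lemma \ref{lem:E mu}$(iv)$ and the compact embedding granted by $|\Omega|<\infty$) the supremum $\sup_{\mu\ge0}(\bar\lambda\mu+\ee(\mu))$ is attained at some $\bar\mu$ by an actual energy ground state — with a separate quantitative argument at $p=2+4/N$ to rule out $\bar\mu=\mu_N$ — and then the equality case of Proposition \ref{prop_pasqua} forces every action ground state in $\mathcal N_{\bar\lambda}$ to have mass $\bar\mu$, contradicting the hypothesis. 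You instead run a purely local, first-order argument: the fiber maps $g_{v_i}(\lambda)=\max_{t>0}J_\lambda(tv_i)=J_\lambda(\sigma_\lambda(v_i)v_i)$ are smooth majorants of $\JJ$ near $\bar\lambda$, touch $\JJ$ at $\bar\lambda$, and have slopes $g_{v_i}'(\bar\lambda)=\tfrac12\|v_i\|_2^2=\mu_i$ there (your envelope computation checks out, and matches the expansion in Step 2 of Lemma \ref{lem:J}); two distinct slopes produce a concave kink, so $\JJ$ lies strictly above the midpoint of a short chord and hence strictly above its convex envelope $\JJ^{**}$ at $\bar\lambda$, which equals $\sup_{\mu\ge0}(\bar\lambda\mu+\ee(\mu))$ by Theorem \ref{THM:dualita} and Remark \ref{rem:m<0}. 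What your approach buys: it never uses the existence of energy ground states, the finite measure of $\Omega$, or even $p\le 2+4/N$ (as you note, these only make $\JJ^{**}$ a nontrivial object), and it bypasses the delicate attainment analysis at the critical mass $\mu_N$; in effect it is the mechanism of Theorem \ref{THM:action}/Corollary \ref{cor:ovvio} but needing only one-sided bounds from the two given minimizers, hence no Assumptions \ref{assE}--\ref{C}. What the paper's approach buys is the structural reading of the result as the exact complement of the rigidity in Theorem \ref{THM:global} — the contrapositive you point out in your last sentence — made precise through the equality case of Proposition \ref{prop_pasqua}. The only points you flag as delicate (the identity $\JJ=\inf_u g_u$ and $g_{v_i}'(\bar\lambda)=\mu_i$) are indeed routine: you only need $\JJ\le g_{v_i}$ near $\bar\lambda$ with equality at $\bar\lambda$, which follows from $\sigma_\lambda(v_i)v_i\in\NL$, and the derivative from the explicit formula $g_{v_i}(\lambda)=\kappa\bigl(\|\nabla v_i\|_2^2+\lambda\|v_i\|_2^2\bigr)^{p/(p-2)}\|v_i\|_p^{-2p/(p-2)}$.
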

The previous theorem unravels a certain asymmetry between the two variational problems. Indeed,
if some $\bar\lambda$ allows, as above, for two action ground states of different masses,
then  Theorem \ref{THM:global} prevents the existence of any energy ground state (of any
prescribed mass) with frequency $\bar\lambda$. This also shows that the implication of Theorem \ref{THM:global} ``energy ground state $\implies$ action ground state'' cannot be reversed, in general.

a statement concerning {\em action} ground states analogous to Theorem \ref{THM:global} is in general false. 

At present, we do not know any reference in the literature providing a domain $\Omega$ and a frequency $\bar\lambda$ 
satisfying the hypotheses of Theorem \ref{THM:pasquetta}. However, we believe that this may happen,
and we can exhibit an explicit example of such a phenomenon in the context of NLS equations on metric graphs, that will be the object of a forthcoming paper. The finite measure assumption in the preceding result is of course far from  sharp. We stated Theorem \ref{THM:pasquetta} in its present form to highlight the key idea underpinning the possible loss of convexity of $\JJ$ in the most basic framework possible. However more general conditions can be considered to extend the result to sets of infinite measure (for the major differences arising in this case see Remark \ref{rem:infmes} below).

Clearly, though up to now we pursued a wide generality, this type of results is most meaningful in those regimes
where ground states of either kind do exist, which of course depends
on the power $p$, on the values of $\lambda$ and $\mu$ being considered, and on $\Omega$. On the one hand, in light of the above discussion it is obvious that
problem \eqref{def EE} admits no solution (for any $\mu>0$) whenever $p>2+4/N$, while existence of energy ground states at the $L^2$--critical power $p=2+4/N$ strongly depends on the specific value of the mass. On the other hand,
existence of action ground states is possible only when $\lambda$ exceeds $-\lambda_\Omega$.
Therefore we introduce the following

\begin{assumption}\label{assE}
Let

\begin{equation}
\label{asspl}
2<p<2+\frac 4 N,\qquad \lambda>-\lambda_\Omega.
\end{equation}
We assume that $\Omega$ is such that action ground states exist for every $\lambda>-\lambda_\Omega$ and energy ground states exist for every $\mu> 0$.
\end{assumption}

For the sake of clarity, we state our next result under Assumption \ref{assE}, though it remains valid as soon as existence is  known to hold in certain intervals
of frequencies and masses. In Section \ref{sec:app} we will provide concrete classes of domains on which our analysis applies.

Note that the (possible) non--convexity of $\JJ$ is in contrast with
the concavity of $\ee$ which 
 entails
that $\ee(\mu)$ is differentiable except, at worst, for a countable set of masses. 
In \cite{DST20}, we further investigated the differentiability of $\ee$, showing that its right and left derivatives satisfy
\[
\ee_+'(\mu)=-\Lambda_+(\mu),\qquad \ee_-'(\mu)=-\Lambda_-(\mu)\,,
\]
where $\Lambda_+$ and $\Lambda_-$ denote, respectively,  the maximum and minimum frequency 
associated with an energy ground state of mass $\mu$. Although
the results in \cite{DST20} are derived in the framework of metric graphs, 
the methods used therein are general and  cover the case of
equation \eqref{nlseq} in $\Omega\subseteq\R^N$ (under a closure assumption analogous to the one discussed below).

Without relying on convexity, however, we can prove 
similar differentiability properties for the function $\JJ(\lambda)$ as well.
To this end, for $\lambda> -\lambda_\Omega$,
we define the  set    
\begin{equation}
\label{qq}
Q(\lambda) := \left\{\mu \,|\,\, \text{there exists an action ground state $u\in \NL$
with $\f12\|u\|_{L^2(\Omega)}^2=\mu$}
 \right\},
\end{equation}
i.e. the set of masses achieved by all action ground states with frequency $\lambda$,
and we consider

\begin{assumption}
\label{C}
For every pair of sequences $\lambda_n> -\lambda_\Omega$
and $\mu_n\in Q(\lambda_n)$ such that
\[
\lambda_n\to \lambda\in (-\lambda_\Omega,+\infty),\qquad
\mu_n\to\mu\in\R,
\]
there holds $\mu\in \overline{Q(\lambda)}$, where  $\overline{Q(\lambda)}$ denotes the closure of $Q(\lambda)$.
\end{assumption}

Roughly, Assumption \ref{C} provides a minimum of continuity on the parameters sufficient to deal with differentiability issues. As pointed out in Remark \ref{rem:C}, it is a compactness assumption,  weaker than other compactness properties of the set of action ground states in $H_0^1(\Omega)$.

\begin{theorem}
\label{THM:action}
If Assumptions \ref{assE}--\ref{C} hold, then  

\begin{itemize}
\item[(i)] the left and right derivatives of $\JJ$ exist for every $\lambda \in(-\lambda_\Omega,+\infty)$ and
\[
\JJ_-'(\lambda)=\sup Q(\lambda), \qquad \JJ_+'(\lambda) = \inf Q(\lambda);
\]
\item[(ii)] there exists an at most countable set $Z \subset (-\lambda_\Omega,+\infty)$ such that,
  for every $\lambda \in (-\lambda_\Omega,+\infty)\setminus Z$, the set
$Q(\lambda)=\{\mu_\lambda\}$ is a singleton.
 In particular, $\JJ$ is differentiable  in $(-\lambda_\Omega,+\infty) \setminus Z$, where  
\begin{equation}
\label{eq:J'}
\JJ'(\lambda)=\mu_\lambda.
\end{equation}
\end{itemize}
\end{theorem}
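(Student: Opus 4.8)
The plan is to recast the whole statement as a differentiability property of a \emph{concave} function, obtained by writing $\JJ$ through a scale--invariant quotient. For $\lambda>-\lambda_\Omega$ and $u\in H_0^1(\Omega)\setminus\{0\}$ set
\[
R_\lambda(u):=\f{\|\nabla u\|_{L^2(\Omega)}^2+\lambda\|u\|_{L^2(\Omega)}^2}{\|u\|_{L^p(\Omega)}^2}.
\]
Since for $\lambda>-\lambda_\Omega$ the fiber $t\mapsto J_\lambda(tu)$ has a unique maximum at the $t_*>0$ for which $t_*u\in\NL$, a direct computation gives $J_\lambda(t_*u)=C_p\,R_\lambda(u)^{p/(p-2)}$ with $C_p=\f12-\f1p$, so that minimizing over fibers yields $\JJ(\lambda)=C_p\,S(\lambda)^{p/(p-2)}$, where $S(\lambda):=\inf_{u\ne0}R_\lambda(u)$. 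Under Assumption \ref{assE} the infimum $S(\lambda)$ is attained and positive, and its minimizers are precisely the action ground states (up to the rescaling $u\mapsto t_*u$).

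The key structural observation is that, for fixed $u$, the map $\lambda\mapsto R_\lambda(u)$ is \emph{affine}, with nonnegative slope $b(u):=\|u\|_{L^2(\Omega)}^2/\|u\|_{L^p(\Omega)}^2$. Hence $S$, being an infimum of affine functions and finite on $(-\lambda_\Omega,+\infty)$, is concave there; consequently it is continuous, it admits left and right derivatives at every interior point, and it is differentiable off an at most countable set $Z$. Since $\phi(x):=C_px^{p/(p-2)}$ is $C^1$ and increasing on $(0,+\infty)$, the function $\JJ=\phi\circ S$ inherits the same one--sided differentiability, with $\JJ_\pm'(\lambda)=\phi'(S(\lambda))\,S_\pm'(\lambda)$; note that $\phi'(x)=\f12x^{2/(p-2)}$, which follows from $C_p=(p-2)/(2p)$.

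It then remains to identify $S_\pm'$ with the extrema of the slope $b$ over the set $M(\lambda)$ of minimizers, and to convert slopes into masses. Normalizing $\|u\|_{L^p(\Omega)}=1$, the relation $t_*^{p-2}=R_\lambda(u)=S(\lambda)$ gives for the mass of the corresponding action ground state
\[
\mu(u)=\f12\|t_*u\|_{L^2(\Omega)}^2=\f12S(\lambda)^{2/(p-2)}b(u)=\phi'(S(\lambda))\,b(u),
\]
so that $Q(\lambda)=\phi'(S(\lambda))\{b(u):u\in M(\lambda)\}$, i.e. the mass is exactly the slope $b$ up to the common positive factor $\phi'(S(\lambda))$. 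I would then establish $S_+'(\lambda)=\inf_{M(\lambda)}b$ and $S_-'(\lambda)=\sup_{M(\lambda)}b$ by the envelope argument: for any minimizer $u$, the inequality $S(\lambda+h)\le R_{\lambda+h}(u)=S(\lambda)+h\,b(u)$ gives the $\le$ bound for $S_+'$ (and, dividing by $h<0$, the $\ge$ bound for $S_-'$), while for the reverse inequalities one picks minimizers $u_n$ at frequencies $\lambda+h_n$, $h_n\to0$, and uses $S(\lambda+h_n)-S(\lambda)\ge R_{\lambda+h_n}(u_n)-R_\lambda(u_n)=h_n\,b(u_n)$. Multiplying through by $\phi'(S(\lambda))$ turns these into $\JJ_+'(\lambda)=\inf Q(\lambda)$ and $\JJ_-'(\lambda)=\sup Q(\lambda)$, which is part (i).

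The main obstacle is precisely the lower bound for $S_+'$ (equivalently the upper bound for $S_-'$), and this is where Assumption \ref{C} is indispensable. Concavity bounds the difference quotients of $S$ near $\lambda$, hence the slopes $b(u_n)$ and the masses $\mu_n:=\mu(u_n)\in Q(\lambda+h_n)$, so a subsequence $\mu_n\to\mu$ exists; Assumption \ref{C} then forces $\mu\in\overline{Q(\lambda)}$, whence $\mu\ge\inf Q(\lambda)$, and passing to the limit in $b(u_n)=\mu_n/\phi'(S(\lambda+h_n))$ via continuity of $S$ yields $\liminf_n b(u_n)\ge\inf_{M(\lambda)}b$. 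Without such a compactness/continuity hypothesis the limiting mass could escape $\overline{Q(\lambda)}$ and the envelope identity would fail. Finally, part (ii) follows at once: for $\lambda\notin Z$ the function $S$, and therefore $\JJ$, is differentiable, so $\inf Q(\lambda)=\JJ_+'(\lambda)=\JJ_-'(\lambda)=\sup Q(\lambda)$, which forces $Q(\lambda)$ to be the singleton $\{\mu_\lambda\}$ with $\JJ'(\lambda)=\mu_\lambda$, while $Z$ is at most countable as the non--differentiability set of a concave function.
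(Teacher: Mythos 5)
Your proposal is correct and follows essentially the same route as the paper: your scale--invariant quotient $S(\lambda)=\inf_u R_\lambda(u)$ coincides (by homogeneity) with the paper's auxiliary function $h(\lambda)=\inf_{\|v\|_p=1}\bigl(\|\nabla v\|_2^2+\lambda\|v\|_2^2\bigr)$, and both arguments exploit that this is a concave infimum of affine functions, identify its one--sided derivatives with the extremal slopes (equivalently, masses) of minimizers via the envelope inequality plus Assumption \ref{C}, and transfer the result to $\JJ$ through the $C^1$ increasing map $x\mapsto\kappa x^{p/(p-2)}$. The only cosmetic difference is that the paper obtains the lower bound on $h_+'$ by passing through a sequence of differentiability points and the monotonicity of $h'$, whereas you estimate the difference quotients directly with minimizers at nearby frequencies; both are valid.
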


\begin{corollary}
\label{cor:ovvio}
If $\bar\lambda$ satisfies the assumptions of Theorem \ref{THM:pasquetta}, then $\JJ$ is not differentiable at $\bar\lambda$.
\end{corollary}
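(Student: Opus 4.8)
The plan is to read off non-differentiability directly from the characterization of the one-sided derivatives of $\JJ$ supplied by Theorem \ref{THM:action}(i), rather than re-deriving anything from scratch. The crucial observation is that the hypotheses of Theorem \ref{THM:pasquetta} hand us two action ground states $v_1,v_2\in\mathcal{N}_{\bar\lambda}$ with \emph{different} masses $\mu_1\neq\mu_2$. By the very definition \eqref{qq} of the set $Q$, these two masses are two distinct elements of $Q(\bar\lambda)$, so $Q(\bar\lambda)$ fails to be a singleton.

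Concretely, I would first record that $\{\mu_1,\mu_2\}\subseteq Q(\bar\lambda)$ with $\mu_1\neq\mu_2$, whence
\[
\inf Q(\bar\lambda)\le\min\{\mu_1,\mu_2\}<\max\{\mu_1,\mu_2\}\le\sup Q(\bar\lambda).
\]
In particular $\bar\lambda$ lies in the exceptional set $Z$ of Theorem \ref{THM:action}(ii). Then I would invoke part (i) of the same theorem, which guarantees that the one-sided derivatives of $\JJ$ at $\bar\lambda$ exist and are given by $\JJ_+'(\bar\lambda)=\inf Q(\bar\lambda)$ and $\JJ_-'(\bar\lambda)=\sup Q(\bar\lambda)$. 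Combining this with the strict inequality above yields
\[
\JJ_+'(\bar\lambda)=\inf Q(\bar\lambda)<\sup Q(\bar\lambda)=\JJ_-'(\bar\lambda),
\]
so the left and right derivatives do not coincide and $\JJ$ is not differentiable at $\bar\lambda$, which is the assertion.

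The only point requiring a word of care is the compatibility of hypotheses: Theorem \ref{THM:action} is stated under Assumptions \ref{assE}--\ref{C}, so at the outset I would place us in that framework (in particular $2<p<2+4/N$ and the compactness Assumption \ref{C}), which is precisely what makes the one-sided derivative formulas available. Once this bookkeeping is in place there is no genuine obstacle: the whole content of the corollary is that two action ground states of distinct mass force $Q(\bar\lambda)$ to be a non-degenerate interval of values, and the formulas $\JJ_\pm'=\inf/\sup Q$ then immediately detect the resulting corner of $\JJ$.
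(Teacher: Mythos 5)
Your proof is correct and follows essentially the same route as the paper's: two action ground states of different masses give $\inf Q(\bar\lambda)<\sup Q(\bar\lambda)$, and Theorem \ref{THM:action}(i) then yields $\JJ_+'(\bar\lambda)<\JJ_-'(\bar\lambda)$. Your remark about placing the argument under Assumptions \ref{assE}--\ref{C} is a reasonable bookkeeping point that the paper leaves implicit.
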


\begin{remark}
Formula \eqref{eq:J'} may look familiar in the light of the by--now standard stability theory for NLS equations \cite{CL,gss87,gss90,ss,w86,w87}. However, the key starting assumption of those works is to consider a $C^1$--curve of solutions to \eqref{nlseq}, parametrized by the frequency $\lambda$, and all the subsequent differentiability and stability properties are given along this curve only. When dealing with ground states, the presence of this  regular curve is not granted in general, unless one already knows something more such as the uniqueness of the solution (as pointed out for instance in \cite[Section 6]{ss}). As  is well--known, uniqueness issues for semilinear elliptic equations are extremely challenging, and very few results are available for positive solutions of \eqref{nlseq} on radial domains only (see the celebrated paper \cite{K} for the case of decaying radial solutions in $\R^N$, as well as \cite{P} and references therein for an overview on the topic). On the contrary, Theorem \ref{THM:action} exploits the minimality of action ground states only and it does not require any further assumption.
\end{remark}

To conclude, we show that the property of energy ground states to be action ground states as well, described in Theorem \ref{THM:global}, has a local counterpart, that we state as our last result. The proof  relies on an explicit comparison between the second derivatives of the action and the energy and, for this reason, the result is valid also for $L^2$--critical and supercritical powers $p\geq2+4/N$. 

\begin{theorem}
\label{THM:local}
Given  $\mu >0$,  let $u \in\Hmu$ be a nondegenerate local minimum for the energy $E$ constrained to $\Hmu$, and  let $\lambda$ denote its frequency as in \eqref{nlseq}.
Then $u$ is a nondegenerate local minimum for the action $J_\lambda$ on $\NL$.
\end{theorem}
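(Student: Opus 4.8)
The plan is to reduce both the energy and the action second-order conditions at $u$ to a single quadratic form, and then to compare them on their respective constraint tangent spaces by a purely algebraic argument. I would first observe that, since $u$ solves \eqref{nlseq} with frequency $\lambda$, it is a \emph{free} critical point of $J_\lambda$ in $H_0^1(\Omega)$: in particular $J_\lambda'(u)u=0$, so $u\in\NL$, and $u$ is a constrained critical point of $J_\lambda$ on $\NL$ whose Lagrange multiplier vanishes (because $J_\lambda'(u)=0$ while $N'(u)\neq0$, writing $N(v):=\|\nabla v\|_{L^2(\Omega)}^2+\lambda\|v\|_{L^2(\Omega)}^2-\|v\|_{L^p(\Omega)}^p$ so that $\NL=\{N=0\}$). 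Consequently the second variation of the action on $\NL$ at $u$ is simply $Q:=J_\lambda''(u)$ restricted to $T_N:=\ker N'(u)$. On the energy side, minimizing $E$ under the mass constraint has precisely $\lambda$ as multiplier and $J_\lambda=E+\frac\lambda2\|\cdot\|_{L^2(\Omega)}^2$ as associated Lagrangian, so the constrained Hessian of the energy at $u$ is the \emph{same} form $Q$, now restricted to $T_E:=\{v\in H_0^1(\Omega):\int_\Omega uv=0\}$. Thus the nondegeneracy hypothesis reads ``$Q$ is positive definite and coercive on $T_E$'', and the goal becomes to prove the same on $T_N$.

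Writing $B$ for the bilinear form of $Q$, that is $B(v,w)=\int_\Omega\nabla v\cdot\nabla w+\lambda\int_\Omega vw-(p-1)\int_\Omega|u|^{p-2}vw$, the next step is to record two identities obtained by taking $w=u$ and using $J_\lambda'(u)=0$ (equivalently $\int_\Omega\nabla u\cdot\nabla v+\lambda\int_\Omega uv=\int_\Omega|u|^{p-2}uv$):
\begin{equation}
\label{eq:Qu}
Q(u)=(2-p)\|u\|_{L^p(\Omega)}^p<0,\qquad B(v,u)=(2-p)\int_\Omega|u|^{p-2}uv.
\end{equation}
Since $p>2$, the second identity shows that $N'(u)[v]=(2-p)\int_\Omega|u|^{p-2}uv$ is a nonzero multiple of $B(v,u)$, which yields the key geometric fact that $T_N=\{v:B(v,u)=0\}$ is exactly the $Q$-orthogonal complement of $u$.

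The comparison is then immediate. I would fix $v\in T_N$, $v\neq0$, and split it along the topological direct sum $H_0^1(\Omega)=T_E\oplus\R u$ as $v=w+su$ with $w\in T_E$ and $s=\big(\int_\Omega uv\big)/\|u\|_{L^2(\Omega)}^2$. Expanding $Q(v)=Q(w)+2sB(w,u)+s^2Q(u)$ and inserting $B(v,u)=0$, i.e. $B(w,u)=-sQ(u)$, makes every cross term collapse, leaving
\begin{equation}
\label{eq:collapse}
Q(v)=Q(w)-s^2\,Q(u).
\end{equation}
As $Q(u)<0$ by \eqref{eq:Qu} and $Q(w)>0$ for $w\neq0$ by the energy nondegeneracy, the right-hand side of \eqref{eq:collapse} is strictly positive unless $w=0$ and $s=0$, i.e. unless $v=0$; hence $Q$ is positive definite on $T_N$. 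Coercivity transfers along the same identity: from $Q(w)\ge c\|w\|_{H^1}^2$ on $T_E$ one gets $Q(v)\ge c\|w\|_{H^1}^2+s^2|Q(u)|\gtrsim\|v\|_{H^1}^2$, using the equivalence of norms on the direct sum $T_E\oplus\R u$. Since $J_\lambda\in C^2(H_0^1(\Omega))$ and $\NL$ is a $C^2$ manifold near $u$ (because $N'(u)\neq0$), a coercive second variation at the critical point $u$ forces $u$ to be a nondegenerate strict local minimum of $J_\lambda$ on $\NL$, which is the assertion. I note that only $p>2$ and $p<2^*$ enter the argument, so it applies verbatim in the $L^2$-critical and supercritical ranges, consistently with the statement.

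The computations are short; the delicate points are conceptual. The first is to set up the two constrained second-order conditions correctly and, crucially, to notice that the action's Nehari multiplier \emph{vanishes}, so that both problems share the single form $Q=J_\lambda''(u)$ and a direct comparison makes sense. The second, which I expect to be the main obstacle, is that in infinite dimensions pointwise positivity is not enough: the hypothesis of a \emph{nondegenerate} minimum must be read as coercivity of $Q$ on $T_E$, and one must check that the collapse identity \eqref{eq:collapse}, together with the finite-codimension splitting $T_E\oplus\R u$, genuinely propagates this coercivity to $T_N$, so that one obtains a true strict local minimum and not merely a formal second-order inequality.
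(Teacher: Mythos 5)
Your proof is correct and takes essentially the same route as the paper: both constrained second variations are identified with the single quadratic form $Q=J_\lambda''(u)$ (Propositions \ref{secondera} and \ref{secondere}), and your collapse identity $Q(v)=Q(w)-s^2Q(u)$ for the $L^2$-orthogonal splitting $v=su+w$ is precisely the paper's formula $\alevel''(u)v^2=\alpha^2(p-2)\int_\Omega|u|^p\,dx+\elevel''(u)\varphi^2$ from Proposition \ref{compare}. The only divergence is the final coercivity transfer, where you retain the positive term $s^2|Q(u)|$ to control the $u$-component directly, whereas the paper discards it and instead bounds $\alpha^2\|u\|_2^2/\|\varphi\|_2^2$ by a H\"older estimate; both work, and yours is marginally more direct.
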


Our results are, to the best of our knowledge,  the first insight on the relation between action and energy ground states in full generality. Of course, stronger results than those in Theorem \ref{THM:global} are available on domains where uniqueness is known, but this applies to the case of the
ball and  few other special cases only. We also mention that, combining \cite[Theorem 3]{FSK12} and \cite[Theorem 1.7]{NTV14}, in the $L^2$--supercritical regime $2+4/N<p<2^*$ and
when $\Omega$ is the unit ball, it is possible to show that the action ground state is not a local minimum of the energy in the corresponding mass constrained space. On the one hand, this implies that our Theorem \ref{THM:local} on local minimizers is sharp in general. On the other hand, this does not relate to the comparison between ground states we developed here, since the definition of energy ground states we consider is meaningless when $p>2+4/N$.

\begin{remark}
The space $\MM_\mu$ is usually defined dropping the (inessential) factor $1/2$ in \eqref{Mmu},
but our choice has the advantage of giving a neat  Legendre transform in  
\eqref{leg_E}: without the factor $1/2$ in \eqref{Mmu}, one would obtain an
equivalent relation in \eqref{leg_E}, in terms of suitable rescalings of either
$\JJ$ or $\ee$.
\end{remark}

\begin{remark}
	With the only exception of Theorem \ref{THM:action}, straightforward adaptations of the arguments presented here allow one to recover all the results of the paper for Schr\"odinger equations with combined nonlinearities
	\[
	\Delta u+|u|^{p-2}u+|u|^{q-2}u=\lambda u, \qquad2<p<q<2^*
	\] 
	(see \cite{JJLV20,Killip,LeCoz,soave1,soave2} and references therein for some recent developments on the topic).
\end{remark}
\medskip

After the present work was completed, we became aware of the interesting paper \cite{JS}, where the authors obtain results strongly related to ours in the case $\Omega=\R^N$ but for a wide class of nonlinearities.
\medskip

The paper is organized as follows. Section \ref{sec:global} recalls some preliminary results, establishes some general properties of the level functions and provides the proof of Theorems \ref{THM:dualita}--\ref{THM:pasquetta}. Section \ref{sec:J} discusses the differentiability properties of the action ground state level as stated in Theorem \ref{THM:action}, whereas Section \ref{sec:local} contains the proof of Theorem \ref{THM:local}. Finally, Section \ref{sec:app} provides examples of domains where the results of the paper apply.

\medskip
\noindent \textbf{Notation.} Throughout, we denote by $\|u\|_q$ the $L^q$ norm of $u$, omitting the domain of integration whenever it is clear from the context.

\section{Preliminaries and proof of Theorems \ref{THM:dualita}--\ref{THM:pasquetta}}
\label{sec:global}

We begin our discussion by stating some useful properties of the energy ground state level $\ee$. To this purpose, we recall, for every $p\in[2,2^*)$, the Gagliardo--Nirenberg inequality
\begin{equation}
\label{GN}
\begin{split}
\|u\|_p^p\leq K_p\|u\|_{2}^{p-N\left(\frac{p}{2}-1\right)}\|\nabla u\|_{2}^{N\left(\frac{p}{2}-1\right)}\,,\qquad\forall u\in H_0^1(\Omega),
\end{split}	
\end{equation}
where $K_p$ is the smallest constant that makes the inequality satisfied, that by invariance under dilations of \eqref{GN} is
\[
K_p=\sup_{u\in H^1(\R^N)} \frac{\|u\|_{L^p(\R^N)}^p}{ \|u\|_{L^2(\R^N)}^{p-\alpha}   \|\nabla u\|_{L^2(\R^N)}^\alpha},\qquad \alpha = N\left(\frac{p}2-1\right).
\]
As a consequence, $K_p$ is independent of $\Omega$ (and is not attained unless $\Omega=\R^N$).

The next lemma collects all the properties of $\ee$ we will need. Most of them are well--known and we report them here for the sake of completeness.
\begin{lemma}
\label{lem:E mu}
Let $\ee:[0,+\infty)\to\mathbb{R}$ be the energy ground state level defined in \eqref{def EE}. The following properties hold:
\begin{itemize}
\item[(i)] if $p\in\left(2,2+\frac4N\right)$, then $\ee(\mu)>-\infty$ for every $\mu\geq0$, $\ee$ is concave on $[0,+\infty)$ and  
$\displaystyle\lim_{\mu\to+\infty}\ee(\mu) /\mu=-\infty$;
\item[(ii)] if $p=2+\frac4N$, then
\begin{equation}
	\label{Ecrit}
	\ee(\mu)\begin{cases}
	\geq0 & \text{if }0<\mu<\mu_N\\
	=0 & \text{if }\mu=\mu_N\\
	=-\infty & \text{if } \mu>\mu_N\,,
	\end{cases}
\end{equation}
where
\begin{equation}
	\label{muN}
	\mu_N:=  \frac12 \left( \frac{p}{2K_p} \right)^{N/2} =  \frac12\left(\frac{N+2}{NK_p}\right)^{N/2},
\end{equation}
and $\ee$ is concave on $[0,\mu_N]$;
\item[(iii)] if $p\in\left(2+\frac4N,2^*\right)$, then $\ee(\mu)=-\infty$ for every $\mu>0$;
\item[(iv)] if $p\in\left(2,2+\frac4N\right]$, then for every $\lambda \in\R$
\[
\sup_{\mu\geq0}\left(\lambda\mu+\ee(\mu)\right)=\max_{\mu\geq0}\left(\lambda\mu+\ee(\mu)\right)\,.
\]

\end{itemize}
\end{lemma}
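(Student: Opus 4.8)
The plan is to derive everything from two elementary devices: the \emph{mass rescaling} $u=\sqrt\mu\,v$, which identifies $\Hmu$ with $\MM_1$, and the \emph{$L^2$--invariant dilation} $u_\sigma(x)=\sigma^{N/2}u(\sigma x)$, which preserves the mass but acts on the two terms of $E$ with different homogeneities. Writing $u=\sqrt\mu\,v$ with $v\in\MM_1$ gives, for every $\mu>0$,
\[
\ee(\mu)=\inf_{v\in\MM_1}f_v(\mu),\qquad f_v(\mu)=\f\mu2\|\nabla v\|_2^2-\f{\mu^{p/2}}p\|v\|_p^p.
\]
Since $p\ge2$, each $f_v$ is concave in $\mu$ on $[0,+\infty)$ (the term $-\mu^{p/2}$ is concave), so $\ee$, being an infimum of concave functions, is concave wherever finite; the same representation, as an infimum of continuous functions, shows that $\ee$ is upper semicontinuous (usc). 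These two facts yield all the concavity claims in (i)--(ii) and supply the continuity needed in (iv). I would also record at the outset that $\ee(0)=E(0)=0$.

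For (i), finiteness $\ee(\mu)>-\infty$ follows from \eqref{GN}: since $p<2+4/N$ forces $\alpha=N(p/2-1)<2$, for $u\in\Hmu$ one has $E(u)\ge\f12\|\nabla u\|_2^2-C(\mu)\|\nabla u\|_2^\alpha$ with $C(\mu)=\f{K_p}p(2\mu)^{(p-\alpha)/2}$, and the right--hand side, viewed as a function of $t=\|\nabla u\|_2\ge0$, is bounded below because $\alpha<2$. The asymptotics $\ee(\mu)/\mu\to-\infty$ come from the upper bound in the rescaling formula: fixing any $v\in\MM_1$ gives $\ee(\mu)/\mu\le\f12\|\nabla v\|_2^2-\f{\mu^{p/2-1}}p\|v\|_p^p\to-\infty$, since $p/2-1>0$.

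For (ii) the exponent is critical, $\alpha=2$, and \eqref{GN} reads $\|u\|_p^p\le K_p(2\mu)^{2/N}\|\nabla u\|_2^2$ on $\Hmu$, whence
\[
E(u)\ge\Big(\f12-\f{K_p}p(2\mu)^{2/N}\Big)\|\nabla u\|_2^2.
\]
The coefficient is nonnegative exactly when $\mu\le\mu_N$, giving $\ee(\mu)\ge0$ there. The matching upper bounds are obtained by transplanting the $\R^N$ optimizer $\psi$ of \eqref{GN}, which realizes $R(\psi):=\|\psi\|_p^p/(\|\psi\|_2^{p-2}\|\nabla\psi\|_2^2)=K_p$: normalizing $\psi$ to mass $\mu$ by an amplitude scaling (which leaves $R$ invariant) gives $E_{\R^N}(\psi)=(\f12-\f{K_p}p(2\mu)^{2/N})\|\nabla\psi\|_2^2$, equal to $0$ at $\mu=\mu_N$ and strictly negative for $\mu>\mu_N$. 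I then concentrate $\psi$ by the $L^2$--invariant dilation and truncate it inside $\Omega$ (the exponential decay of $\psi$ makes the truncation error negligible after concentration) to produce admissible competitors in $\Hmu$: at $\mu=\mu_N$ this yields $\ee(\mu_N)\le0$, hence $\ee(\mu_N)=0$; for $\mu>\mu_N$ it yields some $\tilde u\in\Hmu$ with $E(\tilde u)<0$, and then $E(\tilde u_\sigma)=\sigma^2E(\tilde u)\to-\infty$ forces $\ee(\mu)=-\infty$. Concavity on $[0,\mu_N]$ follows as above.

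For (iii) no Gagliardo--Nirenberg is needed: with $\alpha>2$, the $L^2$--invariant dilation gives $E(u_\sigma)=\f12\sigma^2\|\nabla u\|_2^2-\f1p\sigma^\alpha\|u\|_p^p$ for any fixed compactly supported $u\in\Hmu$, and $\sigma\to+\infty$ drives $E(u_\sigma)\to-\infty$, so $\ee(\mu)=-\infty$ for every $\mu>0$. Finally, for (iv) set $g(\mu)=\lambda\mu+\ee(\mu)$: in the subcritical case $g$ is usc and, by (i), $g(\mu)\to-\infty$, so along a bounded maximizing sequence usc forces the supremum to be attained; in the critical case $g\equiv-\infty$ on $(\mu_N,+\infty)$, so the supremum is taken over the compact interval $[0,\mu_N]$. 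The step I expect to be the main obstacle is the sharp transition in (ii) on a \emph{general} domain—namely $\ee(\mu_N)=0$—which relies on $K_p$ not being attained for $\Omega\ne\R^N$ and on the concentration--plus--truncation construction that imports $\psi$ into $\Hmu$ while keeping the energy arbitrarily close to $0$; a related subtlety is the continuity of $\ee$ at $\mu_N$ used in (iv), which I obtain by combining upper semicontinuity (from the infimum representation) with concavity (which permits only a downward jump at the endpoint), thereby pinning the one--sided limit to $\ee(\mu_N)=0$ and making $g$ genuinely continuous up to $\mu_N$.
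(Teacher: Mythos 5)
Your proof is correct and follows essentially the same route as the paper: the concavity via the representation $\ee(\mu)=\inf_{v\in\MM_1}f_v(\mu)$, the fixed--competitor upper bound giving $\ee(\mu)/\mu\to-\infty$, and the reduction of (iv) to a compact effective domain are all identical, while the Gagliardo--Nirenberg finiteness bounds and the scaling/truncation constructions you spell out in (ii)--(iii) are exactly the ``standard'' facts the paper delegates to the literature. One small point in your favour: for (iv) at the critical exponent the paper infers continuity of $\ee$ on $[0,\mu_N]$ from concavity alone, which by itself does not exclude a downward jump at the right endpoint $\mu_N$; your observation that $\ee$ is upper semicontinuous as an infimum of continuous functions, combined with $\ee\ge 0$ on $[0,\mu_N)$, is precisely what pins down $\lim_{\mu\to\mu_N^-}\ee(\mu)=\ee(\mu_N)=0$ and makes the attainment argument airtight.
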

\begin{proof}
The boundedness properties of $\ee$  in $(i)$--$(ii)$--$(iii)$ are standard and follow from \eqref{GN} (see for instance \cite{cazenave} for the case $\Omega=\R^N$, the general case being analogous). The fact that the threshold $\mu_N$ in \eqref{muN} is the same for every open $\Omega\subseteq\R^N$ is clear since $K_p$ in \eqref{GN} does not depend on $\Omega$.

When $p\in\left(2,2+4/N\right)$, to prove that $\ee$ is concave on $[0,+\infty)$ note that, since for every $\mu>0$ and $u\in \MM_1$ the function $\sqrt\mu u$ belongs to $\MM_\mu$, defining $f_u :[0,+\infty) \to \R$ by
\[
f_u(\mu) := E(\sqrt{\mu}u) = \frac{\mu}{2}\|\nabla u\|_2^2 - \frac{\mu^{p/2}}p \|u\|_p^p,
\]
we have
\[
\ee(\mu)=\inf_{u\in \mathcal{M}_1}f_u(\mu).
\]
Since $f_u$ is  concave  on $[0,+\infty)$ for every $u\in \MM_1$, so is $\ee$.
Furthermore, for any fixed $u \in \MM_1$,
\[
\f{\ee(\mu)}{\mu} \le \f{E(\sqrt{\mu}u)}{\mu} = \frac12\|\nabla u\|_2^2 - \frac{\mu^{\frac{p-2}2}}p \|u\|_p^p \to -\infty
\]
as $\mu \to +\infty$, thus concluding the proof of $(i)$. When $p=2+4/N$ the concavity of $\ee$ on $\left[0,\mu_N\right]$ can be shown as for $(i)$.

It remains to prove $(iv)$. If $p\in\left(2,2+4/N\right)$, it is enough to note that $\lambda\mu+\ee(\mu)$ is continuous on $[0,+\infty)$ (because $\ee$ is concave), and by $(i)$
\[
\lambda\mu+\ee(\mu) = \mu (\lambda + \ee(\mu)/\mu) \to -\infty\qquad\text{as }\mu \to +\infty\,.
\]
Similarly, if $p=2+4/N$, then by \eqref{Ecrit} it follows $\lambda\mu+\ee(\mu)=-\infty$ for every $\mu>\mu_N$, so that
\[
\sup_{\mu\geq0}\left(\lambda\mu+\ee(\mu)\right)=\sup_{0\leq\mu\leq\mu_N}\left(\lambda\mu+\ee(\mu)\right),
\]
and we conclude as above.
\end{proof}
\begin{remark}
Relying on \eqref{GN}, the previous proof exploits the homogeneous Dirichlet condition at the boundary of $\Omega$. However, if one is interested in Neumann boundary conditions, the results of Lemma \ref{lem:E mu} can be proved exactly in the same way, considering the corresponding Gagliardo--Nirenberg inequality
\[
\|u\|_{L^p(\Omega)}^p\leq K_{p,\Omega}'\|u\|_{L^2(\Omega)}^{p-N\left(\frac{p}{2}-1\right)}\|u\|_{H^1(\Omega)}^{N\left(\frac{p}{2}-1\right)}\,.
\]
\end{remark}
We now turn our attention to the action ground state level $\JJ$ and to the relations between $\JJ$ and $\ee$. Let us first recall that, for every $u\in\NL$, one can rewrite the action functional $J_\lambda(u)$ as
\begin{equation}
\label{J tilde}
{J_\lambda}(u) = \kappa \|u\|_p^p=:\alevel(u), \qquad \kappa = \frac12 - \frac1p\,,
\end{equation}
so that
\[
\JJ(\lambda)=\inf_{u\in\NL}\alevel(u)\,.
\]
This immediately shows that $\JJ$ is nonnegative. Recall also that, by Sobolev embeddings, for every $\lambda>-\lambda_\Omega$ there exists $\alpha>0$ (depending on $\lambda$) such that $\JJ(\lambda)\geq\alpha$. 

The key point in the comparison between the two minimization problems is the following result.

\begin{proposition}
\label{prop_pasqua}
For every $\lambda\in\R$, $v\in\NN_\lambda$ and $\mu\geq0$, there results	\begin{equation}
\label{Jv geq Ju}
J_\lambda(v)\geq \ee(\mu)+\lambda\mu\,.
\end{equation}
Equality in \eqref{Jv geq Ju} holds if and only if $v\in\Hmu$ and it is both an energy ground state on $\MM_\mu$ and an action ground state on $\NL$.
\end{proposition}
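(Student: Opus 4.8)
The plan is to fix $\lambda$, $v\in\NL$ and $\mu\ge 0$, write $\mu_v:=\frac12\|v\|_2^2$ for the mass of $v$, and compare $v$ with its mass--preserving rescaling onto $\MM_\mu$. Since $J_\lambda=E+\frac\lambda2\|\cdot\|_2^2$, one has $J_\lambda(v)=E(v)+\lambda\mu_v$, so the inequality is really a statement about the energy. The natural competitor for the energy problem at mass $\mu$ is $sv$ with $s:=\sqrt{\mu/\mu_v}\ge 0$, which by construction lies in $\MM_\mu$ and therefore satisfies $E(sv)\ge\ee(\mu)$. Thus it suffices to prove the sharper inequality $J_\lambda(v)-\lambda\mu\ge E(sv)$, after which $E(sv)\ge\ee(\mu)$ closes the argument.

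The core is then a one--line algebraic identity. First I would expand $J_\lambda(v)-\lambda\mu-E(sv)=E(v)-E(sv)+\lambda(\mu_v-\mu)$ and substitute $\mu=s^2\mu_v$; collecting the gradient and $L^2$ terms makes the combination $\|\nabla v\|_2^2+\lambda\|v\|_2^2$ appear, which by the Nehari constraint equals $\|v\|_p^p$. The whole expression collapses to
\[
J_\lambda(v)-\lambda\mu-E(sv)=\|v\|_p^p\,g(s),\qquad g(s):=\frac{1-s^2}2-\frac{1-s^p}p\,.
\]
The remaining point is purely scalar: $g(1)=0$ and $g'(s)=s(s^{p-2}-1)$ is negative on $(0,1)$ and positive on $(1,\infty)$ (here $p>2$ is used), so $s=1$ is the unique global minimiser of $g$ on $[0,\infty)$ and $g\ge 0$, with equality only at $s=1$. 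Since $\|v\|_p^p>0$ on $\NL$, this yields $J_\lambda(v)-\lambda\mu\ge E(sv)\ge\ee(\mu)$, proving \eqref{Jv geq Ju}.

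For the equality case I would read off the two nonnegative contributions $\|v\|_p^p\,g(s)$ and $E(sv)-\ee(\mu)$: equality forces both to vanish. From $g(s)=0$ we get $s=1$, i.e.\ $\mu=\mu_v$ and $v\in\MM_\mu$, whence $sv=v$; then $E(v)=\ee(\mu)$ says $v$ is an energy ground state on $\MM_\mu$. The converse is immediate, since $v\in\MM_\mu$ gives $s=1$ and $g(1)=0$, while being an energy ground state gives $E(sv)=\ee(\mu)$. The one point needing a separate remark is that equality also makes $v$ an \emph{action} ground state: because \eqref{Jv geq Ju} holds for every element of $\NL$, taking the infimum gives $\JJ(\lambda)\ge\ee(\mu)+\lambda\mu$, so a $v$ attaining equality satisfies $J_\lambda(v)=\ee(\mu)+\lambda\mu\le\JJ(\lambda)\le J_\lambda(v)$ and is therefore a minimiser of $J_\lambda$ on $\NL$. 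I expect no serious obstacle here; the only thing to get right is the bookkeeping in the identity and the observation that the Nehari relation is exactly what turns the $\lambda$--dependent terms into a multiple of $\|v\|_p^p$, reducing the whole problem to the sign of the explicit function $g$.
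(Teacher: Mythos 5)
Your proof is correct and follows essentially the same route as the paper: you rescale $v$ onto $\MM_\mu$ and exploit the fact that, thanks to the Nehari constraint, $t\mapsto J_\lambda(tv)$ is maximized at $t=1$ (your identity $J_\lambda(v)-\lambda\mu-E(sv)=\|v\|_p^p\,g(s)$ is exactly the paper's inequality $J_\lambda(v)\ge J_\lambda(sv)=E(sv)+\lambda\mu$, with the scalar function made explicit). The equality discussion, including the final infimum argument showing that $v$ is also an action ground state, matches the paper's as well.
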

\begin{proof} If $\mu=0$, then \eqref{Jv geq Ju} trivially holds (with strict inequality), as $\ee(\mu)=0$ and $J_\lambda(v)>0$ by \eqref{J tilde}. Let then $\mu>0$ and $v$ be any element in $\NL$, and denote $m: = \|v\|_2^2$. By definition of Nehari manifold, for every $t>0$ there results $J_\lambda(tv) \le J_\lambda(v)$, with strict inequality unless $t=1$. Thus, given any $\mu >0$,

\begin{equation}
\label{ineq1}
J_\lambda(v) \ge J_\lambda\left(\sqrt{\frac{2\mu}{m}}v\right) = E\left(\sqrt{\frac{2\mu}{m}}v\right) + \lambda\mu\ge
\ee(\mu) + \lambda\mu,
\end{equation}
since $\sqrt{2\mu/m}\,v \in \MM_\mu$, and \eqref{Jv geq Ju} is proved.
	
To conclude, note that if $v\in \MM_\mu$ and is an energy ground state, then
\[
J_\lambda(v) = E(v) + \lambda\mu = \ee(\mu) +\lambda\mu
\]
(here the fact that $v$ is an action ground state is not used).
	
Conversely, if equality occurs in \eqref{Jv geq Ju} for some $v\in\NL$, then \eqref{ineq1} is an equality, showing at the same time that $m= 2\mu$, namely $v\in \MM_\mu$, and  that $E(v)=\ee(\mu)$, namely that $v$ is an energy ground state.
Furthermore, $v$ is also a minimizer of $J_\lambda$ in $\NL$, because if this were not the case then there would exist $w\in\NL$, $w\neq v$, satisfying $J_\lambda(w)<J_\lambda(v)=\ee(\mu)+\lambda\mu$, contradicting \eqref{Jv geq Ju}. 
\end{proof}

Relying also on the above proposition, we can now establish the next general properties of $\JJ$. As it will be useful in the following, given $\lambda\in\R$ and $u\in H_0^1(\Omega)$, we set

\begin{equation}
\label{sigma}
\sigma_\lambda(u):=\left(\f{\|\nabla u\|_2^2+\lambda\|u\|_2^2}{\|u\|_p^p}\right)^{\f 1{p-2}}\,.
\end{equation}
Note that $\sigma_\lambda(u)u\in\NL$.

\begin{lemma}
\label{lem:J}
Let $\JJ:\R\to\R$ be the action ground state  level defined in \eqref{def JJ}. The following properties hold:
\begin{itemize}
\item[(i)] $\JJ(\lambda)=0$ for every $\lambda<-\lambda_\Omega$;
\item[(ii)] $\JJ$ is increasing on $\R$ and continuous on $\R\setminus\{-\lambda_\Omega\}$;
\item[(iii)] 
\begin{equation}
\label{J/lambda}
\lim_{\lambda\to+\infty}\f{\JJ(\lambda)}\lambda=\begin{cases}
+\infty & \text{if }p\in\left(2,2+\frac4N\right)\\
\mu_N & \text{if }p=2+\frac4N\\
0 & \text{if }p\in\left(2+\frac4N,2^*\right),
\end{cases}
\end{equation}
where $\mu_N$ is the number defined in \eqref{muN}.
\end{itemize}	
\end{lemma}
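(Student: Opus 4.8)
The plan is to reduce all three statements to a single representation of $\JJ$ coming from the scaling structure of the Nehari manifold. For $u\neq0$ set $Q_\lambda(u):=\|\nabla u\|_2^2+\lambda\|u\|_2^2$ and $R_\lambda(u):=Q_\lambda(u)/\|u\|_p^2$. The ray $t\mapsto tu$ meets $\NL$ precisely when $Q_\lambda(u)>0$, and then the intersection occurs at $t=\sigma_\lambda(u)$ (see \eqref{sigma}); using \eqref{J tilde} a direct computation gives $J_\lambda(\sigma_\lambda(u)u)=\kappa\,R_\lambda(u)^{p/(p-2)}$. Since every element of $\NL$ is of this form, I would first establish the backbone identity
\[
\JJ(\lambda)=\kappa\Big(\inf_{u\neq0,\,Q_\lambda(u)>0}R_\lambda(u)\Big)^{\!p/(p-2)}.
\]
The crucial remark is that for $\lambda>-\lambda_\Omega$ one has $Q_\lambda(u)\ge(\lambda+\lambda_\Omega)\|u\|_2^2>0$ for all $u\neq0$, so the constraint becomes vacuous and the infimum runs over all of $H_0^1(\Omega)\setminus\{0\}$; this is what makes the interval $(-\lambda_\Omega,+\infty)$ the natural domain of regularity.

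For (i), since $\JJ\ge0$ always (by \eqref{J tilde}), it suffices to produce admissible directions with $R_\lambda\to0^+$. When $\lambda<-\lambda_\Omega$ the definition of $\lambda_\Omega$ yields a compactly supported $v$ with $Q_\lambda(v)<0$; placing a strongly concentrated $w$ in a ball of $\Omega$ disjoint from $\mathrm{supp}\,v$ gives $Q_\lambda(w)>0$. Along the family $u_t=tv+w$ the cross terms vanish, so $Q_\lambda(u_t)=t^2Q_\lambda(v)+Q_\lambda(w)$ decreases to $0$ at some $t_*$, while $\|u_t\|_p^p\ge\|w\|_p^p>0$ stays bounded away from $0$; hence $R_\lambda(u_t)\to0$ as $t\nearrow t_*$ and $\JJ(\lambda)=0$. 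For (ii), I would argue monotonicity from the pointwise identity $R_{\lambda_2}(u)-R_{\lambda_1}(u)=(\lambda_2-\lambda_1)\|u\|_2^2/\|u\|_p^2\ge0$: on $(-\lambda_\Omega,+\infty)$ the infima are over the same full set, giving $\JJ(\lambda_1)\le\JJ(\lambda_2)$, and strict monotonicity follows because equality would force a minimizing sequence with $\|u_n\|_2^2/\|u_n\|_p^2\to0$, contradicting \eqref{GN} (normalizing $\|u_n\|_p=1$ makes the right-hand side of \eqref{GN} vanish since $p-\alpha>0$ for $p<2^*$). The boundary cases reduce to (i) and $\JJ\ge0$, together with $Q_{-\lambda_\Omega}(u)\ge0$ for all $u$. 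Continuity on $(-\lambda_\Omega,+\infty)$ is then immediate: $\lambda\mapsto\inf_uR_\lambda(u)$ is an infimum of affine functions of $\lambda$, hence concave and finite on the open interval, therefore continuous, and composing with $x\mapsto\kappa x^{p/(p-2)}$ preserves continuity; on $(-\infty,-\lambda_\Omega)$ one has $\JJ\equiv0$ by (i).

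For (iii) I would write $\JJ(\lambda)=\kappa\,m(\lambda)^{p/(p-2)}$ with $m(\lambda)=\inf_uR_\lambda(u)$ and attack the two bounds separately. The \emph{lower} bounds come from Proposition \ref{prop_pasqua}: dividing $\JJ(\lambda)\ge\ee(\mu)+\lambda\mu$ by $\lambda>0$ and letting $\lambda\to+\infty$ gives $\liminf\JJ(\lambda)/\lambda\ge\mu$ for every admissible $\mu$; by Lemma \ref{lem:E mu} this supremum is $+\infty$ when $p<2+4/N$ and is $\ge\mu_N$ (taking $\mu=\mu_N$, where $\ee(\mu_N)=0$) when $p=2+4/N$. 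The \emph{upper} bounds come from concentrating trial profiles: for a fixed $\phi$ the dilations $\phi_s$ give $R_\lambda(\phi_s)=C_1s^{\beta-2}+C_2\lambda s^{\beta}$ with $\beta=N(p-2)/p<2$, whose minimum over $s$ is of order $\lambda^{1-\beta/2}$, so $\JJ(\lambda)=O(\lambda^{p/(p-2)-N/2})$; since the exponent of $\JJ(\lambda)/\lambda$ is $2/(p-2)-N/2$, this already yields the limit $0$ in the supercritical case.

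The main obstacle is the critical case $p=2+4/N$, where I must match the constant exactly to $\mu_N$ in \eqref{muN}. The lower bound gives $\ge\mu_N$; for the matching $\limsup\JJ(\lambda)/\lambda\le\mu_N$ I would choose the trial profile $\phi$ to be a near-optimizer of \eqref{GN}, exploiting the dilation/translation invariance of the Gagliardo--Nirenberg quotient (so that a near-optimal $\phi$ can be concentrated inside any ball of $\Omega$ without changing its quotient) and letting its quotient tend to the sharp constant $K_p$. Carrying out the optimization in $s$ and sending the quotient to $K_p$ should produce exactly $\kappa\cdot(\text{const})^{p/(p-2)}=\mu_N$, closing the sandwich. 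The remaining subtleties—checking that $p-\alpha>0$ throughout the range \eqref{rangep}, and handling the degenerate boundary point $\lambda=-\lambda_\Omega$ in the monotonicity argument via the inequality $\|\nabla u\|_2^2\ge\lambda_\Omega\|u\|_2^2$—are routine.
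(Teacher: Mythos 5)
Your proof is correct, but several steps take a genuinely different route from the paper's. The backbone identity $\JJ(\lambda)=\kappa\bigl(\inf_u R_\lambda(u)\bigr)^{p/(p-2)}$ is exactly the device the paper introduces only later (the function $h$ in the proof of Theorem \ref{THM:action}); importing it here buys you a cleaner continuity argument for (ii) — concavity of an infimum of affine functions on the open interval where it is finite — whereas the paper proves continuity by hand via the bound $\|u\|_2^2/\|u\|_p^p\le(\lambda+\lambda_\Omega)^{-1}$ on $\NL$ and a two-sided $o(1)$ estimate. For (i) the paper perturbs the first eigenfunction of a bounded subdomain $\Omega'$ with $\lambda_{\Omega'}=-\lambda$ by a multiple of the second eigenfunction and shows $\sigma_\lambda\to0$; your disjoint-support interpolation $u_t=tv+w$ driving $Q_\lambda(u_t)\to0^+$ while $\|u_t\|_p$ stays bounded below is more elementary and avoids invoking the existence of such an $\Omega'$. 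Part (iii) coincides with the paper in the subcritical lower bound (via Proposition \ref{prop_pasqua}) and in the supercritical scaling computation, but differs in the critical case: the paper takes truncated $L^2$-critical solitons $v_\lambda$ of mass $\mu_N$ with $E(v_\lambda)=o(1)$, so that $J_\lambda(v_\lambda)=\lambda\mu_N+o(1)$ directly, while you optimize the dilation parameter for a compactly supported near-optimizer of \eqref{GN}. This is the one place where your argument is asserted rather than executed ("should produce exactly $\mu_N$"), but the computation does close: minimizing $C_1s^{\beta-2}+C_2\lambda s^{\beta}$ with $\beta=2N/(N+2)$ gives $\kappa\,f(s_*)^{p/(p-2)}=\tfrac{N}{2(N+2)}\bigl(\tfrac{N+2}{N}\bigr)^{(N+2)/2}K_p^{-N/2}\,\lambda\,(1+o(1))=\mu_N\lambda\,(1+o(1))$ as the Gagliardo--Nirenberg quotient of the trial profile tends to $K_p$, matching \eqref{muN}. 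Your additional observation that $\JJ$ is \emph{strictly} increasing on $(-\lambda_\Omega,+\infty)$ (via $\|u_n\|_2\to0$ with $\|u_n\|_p=1$ contradicting \eqref{GN}) goes beyond what the paper proves and is correct since $p-\alpha>0$ for $p<2^*$.
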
 

\begin{proof}
The proof is divided into several step.
\medskip

\noindent{\em Step 1: proof of (i).} Since $\lambda<-\lambda_\Omega$, there exists a bounded subset $\Omega'\subset\Omega$ so that $\lambda_{\Omega'}=-\lambda$. Let $\varphi_1,\varphi_2\in H_0^1(\Omega')$ be the eigenfunctions associated to $\lambda_{\Omega'}$ and to the second eigenvalue $\lambda_2$ of the Dirichlet Laplacian on $\Omega'$, respectively. For $\varepsilon>0$, let $v_\varepsilon:=\sigma_\lambda(\varphi_1+\varepsilon\varphi_2)\left(\varphi_1+\varepsilon\varphi_2\right)$. Then $v_\varepsilon\in\NN_\lambda$ by definition of $\sigma_\lambda$ and because $H_0^1(\Omega')\subset H_0^1(\Omega)$. Moreover, recalling \eqref{sigma}, as $\eps \to 0$,

\[
\begin{split}
\sigma_\lambda(\varphi_1+\varepsilon\varphi_2)=&\left(\f{\|\nabla\varphi_1+\varepsilon\nabla\varphi_2\|_2^2+\lambda\|\varphi_1+\varepsilon\varphi_2\|_2^2}{\|\varphi_1+\varepsilon\varphi_2\|_p^p}\right)^{\f1{p-2}}\\
=&\left(\f{\|\nabla\varphi_1\|_2^2+\varepsilon\|\nabla\varphi_2\|_2^2+\lambda\|\varphi_1\|_2^2+\lambda\varepsilon^2\|\varphi_2\|_2^2}{\|\varphi_1+\varepsilon\varphi_2\|_p^p}\right)^{\f1{p-2}}=\left(\f{\varepsilon^2(\lambda_2+\lambda)\|\varphi_2\|_2^2}{\|\varphi_1+\varepsilon\varphi_2\|_p^p}\right)^{\f1{p-2}} = o(1),
\end{split}
\]
where we used the fact that $\varphi_1,\varphi_2$ are orthogonal in $L^2(\Omega)$, $\|\nabla\varphi_1\|_2^2=\lambda_{\Omega'}\|\varphi_1\|_2^2=-\lambda\|\varphi_1\|_2^2$ and $\|\nabla\varphi_2\|_2^2=\lambda_2\|\varphi_2\|_2^2$ by construction. Hence,
\[
0\leq\JJ(\lambda)\leq\lim_{\varepsilon\to0}J_\lambda(v_\varepsilon)=\lim_{\varepsilon\to0}\kappa\sigma_\lambda(\varphi_1+\varepsilon\varphi_2)^p\|\varphi_1+\varepsilon\varphi_2\|_p^p=0\,.
\]
\medskip
	
\noindent{\em Step 2: proof of (ii).} In view of $(i)$ and of the nonnegativity of $\JJ$, it is enough to prove that $\JJ$ is increasing on $[-\lambda_\Omega,+\infty)$ and continuous on $(-\lambda_\Omega,+\infty)$. 
	
Let then $-\lambda_\Omega \le \lambda < \lambda'$. For every $u\in\NN_{\lambda'}$, we see from \eqref{sigma} 
that $\sigma_{\lambda}(u) \le 1$. Therefore
\[
\JJ(\lambda)\leq J_{\lambda}(\sigma_{\lambda}(u)u)=\kappa\sigma_{\lambda}(u)^p\|u\|_p^p
\le \kappa \|u\|_p^p = J_{\lambda'}(u)
\]
Hence, passing to the infimum over $u\in\NN_{\lambda'}$ yields $\JJ(\lambda) \le \JJ(\lambda')$.
	
As for the continuity of $\JJ$, note first that for every $\lambda > -\lambda_\Omega$, by definition of $\NL$, 
\[
\frac{\|u\|_2^2}{\|u\|_p^p} \le \frac{1}{\lambda+\lambda_\Omega}, \qquad \forall u \in \NL.
\]
Now let $\lambda, \lambda' > -\lambda_\Omega$ and for $u \in \NN_{\lambda'}$ notice that 
\[
\JJ(\lambda) \le J_\lambda(\sigma_\lambda(u)u) = \left( 1 + (\lambda-\lambda')\frac{\|u\|_2^2}{\|u\|_p^p}\right)^{\frac{p}{p-2}}\kappa \|u\|_p^p = (1 + o(1))J_{\lambda'}(u)
\]
as $\lambda' \to \lambda$. Passing to the infimum over $u\in \NN_{\lambda'}$ we obtain
\[
\JJ(\lambda) -\JJ(\lambda') \le o(1).
\]
Reversing the role of $\lambda$ and $\lambda'$, we also have $\JJ(\lambda') -\JJ(\lambda) \le o(1)$, and continuity is proved.
\medskip

\noindent{\em Step 3: proof of (iii) for $p\in\left(2,2+4/N\right)$.} For every $\lambda >0$, by passing to the infimum over $v\in\NN_\lambda$ in Proposition \ref{prop_pasqua}, we have
$\JJ(\lambda) \ge \ee(\mu) + \lambda\mu$  for every $\mu >0$. Note that $\ee(\mu)$ is finite since $p\in\left(2,2+4/N\right)$. Therefore
\[
\liminf_{\lambda\to +\infty} \frac{\JJ(\lambda)}{\lambda} \ge \liminf_{\lambda\to +\infty} \frac{\ee(\mu) +\lambda\mu}{\lambda} = \mu.
\]
Since $\mu$ is arbitrary, the conclusion follows.
\medskip
	
\noindent{\em Step 4: proof of (iii) for $p\in\left(2+4/N,2^*\right)$.} Let $B=B_r(x_0)$ be a ball contained in $\Omega$ and take a function $v \in H^1_0(B)$ satisfying $\|\nabla v\|_{L^2(B)}^2 + \|v\|_{L^2(B)}^2 = \|v\|_{L^p(B)}^p$ (namely, $v\in \NN_1(B)$). For every $\lambda \ge 1$, define 
\[
v_\lambda(x) = \lambda^{\frac{1}{p-2}}v(\sqrt\lambda(x-x_0)).
\]
Now, $v_\lambda$ is supported in $B_{r/\sqrt\lambda}(x_0)$ and, after extending it to $0$ outside the ball, we can view it as an element of $H_0^1(\Omega)$. By elementary computations, we see that $v_\lambda \in \NL$ for every $\lambda$. Thus
\[
0 \le \frac{\JJ(\lambda)}{\lambda} \le \frac{J_\lambda(v_\lambda)}{\lambda} = \frac{\kappa}{\lambda} \|v_\lambda\|_p^p = \kappa \lambda^{\frac{p}{p-2}-\frac{N}{2}-1}\|v\|_{L^p(B)}^p.
\]
Since $\frac{p}{p-2}-\frac{N}{2}-1<0$ when $p > 2+ 4/N$, letting $\lambda \to +\infty$, we conclude.
\medskip
	
\noindent{\em Step 5: proof of (iii) for $p=2+4/N$.} On the one hand, by Lemma \ref{lem:E mu}$(ii)$ and Proposition \ref{prop_pasqua} with $\mu=\mu_N$, for every $\lambda\in\R$ we have
\[
\JJ(\lambda)\geq\ee(\mu_N)+\lambda\mu_N=\lambda\mu_N,
\]
yielding $\JJ(\lambda)/\lambda\geq\mu_N$. On the other hand, if $\lambda$ is sufficiently large, there exists $v_\lambda\in\NN_\lambda$, compactly supported in a ball contained in $\Omega$, and such that $\|v_\lambda\|_2^2=\mu_N$ and $E(v_\lambda)=o(1)$ as $\lambda \to +\infty$ (to construct  $v_\lambda$ it is for instance enough to consider suitable compactly--supported truncations of the $L^2$--critical solitons in $\R^N$). Then 
\[
\limsup_{\lambda\to+\infty}\f{\JJ(\lambda)}\lambda\leq \lim_{\lambda\to+\infty}\f{J_\lambda(v_\lambda)}\lambda=\lim_{\lambda\to+\infty}\f{\lambda\mu_N+o(1)}\lambda=\mu_N
\]
and the proof is complete.
\end{proof}

\begin{remark}
\label{rem:lomega}
Note that, adapting the argument in Step 2 of the previous proof, one can show that $\JJ(\lambda)$ is continuous from the right  at $\lambda=-\lambda_\Omega$. Hence, by Lemma \ref{lem:J}$(i)$--$(ii)$, the continuity of $\JJ$ on the whole of $\R$ is  equivalent to $\JJ(-\lambda_\Omega)=0$. This equality can be easily proved (repeating the argument in the proof of Lemma \ref{lem:J}, Step 1) whenever $-\lambda_\Omega$ is attained by a corresponding eigenfunction in $H_0^1(\Omega)$. This is for instance the case if $\Omega$ has finite measure. Another condition sufficient for the continuity of $\JJ$ is the existence of energy ground states $u\in\MM_{\mu}$ for arbitrarily small masses. To see this,
suppose ($u_n)_n$ is a sequence of energy ground states with masses $\mu_n \to 0$ and frequencies $\lambda_n$ (of course
larger than $-\lambda_\Omega$). By \eqref{GN},  $u_n$ is bounded in $H^1$ and, as $\mu_n \to 0$, 
$\|u_n\|_p \to 0$. But $\|u_n\|_p^p = \JJ(\lambda_n)$ by Theorem \ref{THM:global} and
since $\JJ$ is increasing, positive for $\lambda > -\lambda_\Omega$ and $\JJ(\lambda_n) \to 0$, it must be $\lambda \to -\lambda_\Omega$. By continuity, $\JJ(-\lambda_\Omega) = 0$, as claimed.

However, as already anticipated in the Introduction, to prove or disprove the validity of $\JJ(-\lambda_\Omega)=0$ in full generality seems to be an open problem. Incidentally, we observe that the problem is related to certain quantitative versions of the Poincar\'e inequality:  $\JJ$ is continuous (at $-\lambda_\Omega$) if and only if there is no constant $c>0$ such that
\[
\|\nabla u\|_2^2\geq\lambda_{\Omega}\|u\|_2^2+c\|u\|_p^2,\qquad\forall u\in H_0^1(\Omega).
\] 
\end{remark}

Theorems \ref{THM:dualita}--\ref{THM:global}--\ref{THM:pasquetta} are then direct consequences of the above results.

\begin{remark}
	\label{rem:m<0}
	Lemma \ref{lem:J}$(i)$ implies that $\JJ^*(\mu)=+\infty$ for every $\mu<0$, since
	\[
	\JJ^*(\mu)=\sup_{\lambda\in\R}\left(\lambda\mu-\JJ(\lambda)\right)\geq\sup_{\lambda<-\lambda_\Omega}\left(\lambda\mu-\JJ(\lambda)\right)=\sup_{\lambda<-\lambda_\Omega}\lambda\mu = +\infty
	\]
	as soon as $\mu$ is negative. 
\end{remark}

\begin{proof}[Proof of Theorem \ref{THM:dualita}]
When $\mu=0$ the theorem is trivial, as $\ee(\mu)=0$ and $\sup_{\lambda\in\R}\left(-\JJ(\lambda)\right)=0$ by Lemma \ref{lem:J}$(i)$.
Let then $\mu>0$. We split the rest of the proof into three cases, depending on the nonlinearity power.
\smallskip

\noindent{\em Case 1: $p\in\left(2+4/N,2^*\right)$.} In this regime, \eqref{leg_E} plainly holds, since for every $\mu>0$ by Lemma \ref{lem:E mu}$(iii)$ we have $-\ee(\mu)=+\infty$, whereas by Lemma \ref{lem:J}$(iii)$,
\[
\sup_{\lambda\in\R}\left(\lambda\mu-\JJ(\lambda)\right)\geq\lim_{\lambda\to+\infty}\lambda\left(\mu-\f{\JJ(\lambda)}\lambda\right)=+\infty\,.
\]
\smallskip
	
\noindent{\em Case 2: $p\in\left(2,2+4/N\right)$.} In this case, by Lemma \ref{lem:E mu}$(i)$, $-\ee(\mu)<+\infty$. By Proposition \ref{prop_pasqua},
\[
-\ee(\mu)\geq\lambda\mu-\JJ(\lambda)\qquad\forall\lambda\in\R\,,
\]
so that 
\[
-\ee(\mu)\geq\sup_{\lambda\in\R}\left(\lambda\mu-\JJ(\lambda)\right).
\]
Conversely, let $\left(u_n\right)_n\subset \MM_{\mu}$ be such that $\lim_{n\to+\infty}E(u_n)=\ee(\mu)$. Then $u_n\in\NN_{\lambda_n}$, for some $\lambda_n\in\R$, entailing
\[
-\ee(\mu)=-\lim_{n\to+\infty}E(u_n) \le \limsup_{n\to+\infty}\left(\lambda_n\mu-\JJ(\lambda_n)\right)\leq\sup_{\lambda\in\R}\left(\lambda\mu-\JJ(\lambda)\right),
\]
completing the proof of \eqref{leg_E} in the $L^2$--subcritical regime.
\medskip
	
\noindent{\em Case 3: $p=2+4/N$.} In this case we need to argue depending on the value of $\mu$. If $\mu>\mu_N$, then by Lemma \ref{lem:E mu}$(ii)$ we have $-\ee(\mu)=+\infty$, while Lemma \ref{lem:J}$(iii)$ implies
\[
\sup_{\lambda\in\R}\left(\lambda\mu-\JJ(\lambda)\right)\geq\lim_{\lambda\to+\infty}\lambda\left(\mu-\f{\JJ(\lambda)}\lambda\right)=+\infty\,,
\]
and \eqref{leg_E} thus holds. On the contrary, if $\mu\in(0,\mu_N]$, since Lemma \ref{lem:E mu}$(ii)$ ensures that $-\ee(\mu)<+\infty$, it is enough to repeat the argument already developed in the $L^2$--subcritical case.
\end{proof}
\medskip

\begin{proof}[Proof of Theorem \ref{THM:global}]
If $u$ is an energy ground state on $\MM_\mu$ and $u\in\NN_\lambda$, then by \eqref{Jv geq Ju}, for every $w\in\NN_\lambda$,
\[
J_\lambda(w) \ge \ee(\mu) + \lambda \mu = E(u) + \frac12 \lambda\|u\|_2^2 = J_\lambda(u),
\]
namely $u$ is an action ground state on $\mathcal{N}_{\lambda}$. 
Moreover, if $v\in\mathcal{N}_{\lambda}$ is any action ground state on $\mathcal{N}_{\lambda}$, then 
$J_{\lambda}(v) = J_{\lambda}(u)$, i.e. \eqref{Jv geq Ju} is an equality. Proposition \ref{prop_pasqua} then implies that 
$\|v\|_2^2=2\mu$ and $v$ is an energy ground state on $\MM_\mu$. 
\end{proof}
\medskip

\begin{proof}[Proof of Theorem \ref{THM:pasquetta}]
We divide the proof in two cases, dealing separately with the $L^2$--subcritical regime $p\in(2,2+4/N)$ and the critical one $p=2+4/N$.

{\em Case 1: $p\in(2,2+4/N)$}. Note that by \eqref{Jv geq Ju} we already know that for every $\lambda$
\[
\JJ(\lambda)\ge \sup_{\mu\geq0}\left(\lambda\mu+\ee(\mu)\right) = \JJ^{**}(\lambda)\,.
\]
To prove \eqref{eq:pasquetta}, assume by contradiction that $\JJ(\lambda)=\sup_{\mu\geq0}\left(\lambda\mu+\ee(\mu)\right)$ for every $\lambda$, so that in particular  equality holds at $\lambda=\bar\lambda$. Since by Lemma \ref{lem:E mu}$(iv)$ the right--hand side of this equality is attained, there exist $\bar\mu>0$ and an energy ground state $u\in \mathcal{M}_{\bar\mu}$ such that
\begin{equation}
\label{barmu}
\JJ(\bar\lambda)=\max_{\mu>0}\left(\bar\lambda\mu+\ee(\mu)\right)=\bar\lambda\bar\mu+\ee(\bar\mu)=J_{\bar\lambda}(u)\,.
\end{equation}
The existence of the ground state $u$ above is granted by the fact that $\Omega$ is of finite measure, which makes the embedding $H_0^1(\Omega) \hookrightarrow L^p(\Omega)$  compact, for every $p\in[2,2^*)$.
By Proposition \ref{prop_pasqua}, every action ground state in $\mathcal{N}_{\bar\lambda}$ belongs to $\mathcal{M}_{\bar\mu}$, and this contradicts the fact that $v_1$ and $v_2$ have different masses. 
	
Finally, since the finite measure of $\Omega$ implies that $\JJ$ is continuous by Remark \ref{rem:lomega}, if $\JJ$ were convex, then we would have $\JJ(\lambda) =  \JJ^{**}(\lambda)$ for every $\lambda$, and we have just proved that  this is not the case.

{\em Case 2: $p=2+4/N$.} The line of the proof is almost identical to that of the previous case. The only difference is that the finite measure of $\Omega$ implies the existence of energy ground states for every mass $\mu\in(0,\mu_N)$. On the contrary, ground states never exist when $\mu=\mu_N$, since if a ground state at mass $\mu_N$ exists on $\Omega$, then it is also a ground state for the same problem on the whole $\R^N$, but this is impossible if $\Omega\neq\R^N$. Hence, to repeat the argument in the first part of the proof we need to show that the mass $\bar\mu$ realizing \eqref{barmu} is different from $\mu_N$. We do this by proving that for every fixed $\lambda\in\R$ 
\begin{equation}
\label{no muN}
\max_{\mu\in[0,\mu_N]}\left(\lambda\mu+\ee(\mu)\right)>\lambda\mu_N+\ee(\mu_N)=\lambda\mu_N\,.
\end{equation}
As $p= 2 + 4/N$, the Gagliardo-Nirenberg inequality reads
\[
\|u\|_p^p\leq K_p\|u\|_{2}^{4/N}\|\nabla u\|_{2}^2.
\]
For $\mu < \mu_N$, let $u_\mu$ be an energy ground state in $\MM_\mu$. By the preceding inequality, keeping in mind \eqref{muN}, 
\begin{align*}
\ee(\mu) &= E(u_\mu) \ge \frac12 \|\nabla u_\mu\|_2^2
\left(1 -\frac{\mu^{2/N}}{\mu_N^{2/N}}\right) =  \frac12 \|\nabla u_\mu\|_2^2 \left(\frac{\mu_N^{2/N}-\mu^{2/N}}{\mu_N^{2/N}} \right) \\
&=   \frac12 \|\nabla u_\mu\|_2^2(\mu_N-\mu) \left(\frac{2}{N\mu_N} +o(\mu_N-\mu)\right)
\end{align*}
as $\mu \to \mu_N$, namely
\[
\frac{\ee(\mu)}{\mu_N-\mu} \ge C \|\nabla u_\mu\|_2^2.
\]
Now $\|\nabla u_\mu\|_2$ cannot be bounded as $\mu \to \mu_N$, since by the compactness of the embedding $H_0^1(\Omega)\hookrightarrow L^p(\Omega)$ for every $p\in[2,2^*)$  this would imply the existence of an energy ground state in $\MM_{\mu_N}$, which is impossible. Therefore the left--hand side tends to $+\infty$ as $\mu \to \mu_N$. In other words, for every $\lambda \in \R$, there exists $\mu \in (0,\mu_N)$ such that 
\[
\ee(\mu) > \lambda (\mu_N -\mu),
\]
which is what we claimed.
\end{proof}

\begin{remark}
	\label{rem:infmes}
	As the above proof plainly shows, working with sets of finite measure ensures both that $\JJ$ is continuous on  $\R$ and that there always exists an energy ground state with mass $\bar\mu$ realizing the maximum in \eqref{barmu}. Clearly, this assumption can be relaxed, but then to recover the results of Theorem \ref{THM:pasquetta} seems to require a careful analysis of specific properties of the domain under exam. For a glimpse on how the situation becomes more involved, note on the one hand that $\JJ$ and $\JJ^{**}$ always coincide if $\Omega$ contains balls of arbitrary radius (the function $\JJ$ coincides with $\JJ_{\R^N}$ and  as  $\JJ_{\R^N}$ is convex, so is $\JJ$).	
On the other hand, Section \ref{subsec:unbound} below provides nontrivial examples of domains with infinite measure where Theorem \ref{THM:pasquetta} can be recovered by a simple adaptation of the previous argument.
\end{remark}
\section{Proof of Theorem \ref{THM:action}}
\label{sec:J}

This section is devoted to the differentiability properties of $\JJ$ as stated in Theorem \ref{THM:action}. 
\begin{proof}[Proof of Theorem \ref{THM:action}] We consider the following auxiliary problem. Let 
\[
\Sf = \left\{v \in H^1_0(\Omega)\; :\; \|v\|_p =1\right\}
\] 
and, for every $v\in \Sf$, define $h_v: \interv \to \R$ by 
\[
h_ v(\lambda) := \|\nabla v\|_2^2 + \lambda \|v\|_2^2\,.
\]
Let then $h : \interv \to \R$ be
\begin{equation}
\label{acca}
 h(\lambda) := \inf_{v \in S} h_v(\lambda).
\end{equation}
In view of \eqref{sigma},  $u:= \sigma_\lambda(v)v \in \NL$ and
\begin{equation*}
J_\lambda(u) = J_\lambda( \sigma_\lambda(v)v) = \kappa\sigma_\lambda(v)^2 \left( \|\nabla v\|_2^2 + \lambda \|v\|_2^2 \right) = \kappa h_v(\lambda)^{\frac{2}{p-2}}h_v(\lambda) =  \kappa h_v(\lambda)^{\frac{p}{p-2}},
\end{equation*}
so that passing to the infimum over $v\in\mathcal{S}$
\begin{equation}
\label{J h}
\JJ(\lambda) = \kappa h(\lambda)^{\frac{p}{p-2}}\,.
\end{equation}
Thus, minimizers in $\Sf$ of $h(\lambda)$ and action ground states in $\NL$ are in one--to--one correspondence and, in particular, by Assumption \ref{assE} one can replace the infimum by the minimum in \eqref{acca}. Also, recalling \eqref{qq}, if $m$ is the mass of a minimizer for $h(\lambda)$, then $m=2 h(\lambda)^{\frac{2}{2-p}}\mu$, where $\mu \in Q(\lambda)$, namely $2\mu$ is the mass of an action ground state in $\NL$.

The function $h(\lambda)$, being the minimum of the affine functions $h_v(\lambda)$, is concave and, as such, it has right and left derivatives everywhere in $\interv$ and is differentiable outside of an at most countable set $Z$.

If $\lambda$ is a point where $h$ is differentiable and $v\in \Sf$ is such that $h_v(\lambda) = h(\lambda)$ and $\|v\|_2^2=m$, then clearly $h'(\lambda) = m =2 h(\lambda)^{\frac{2}{2-p}}\mu$ for some $\mu \in Q(\lambda)$. Incidentally, this also shows that
$h$ is differentiable at $\lambda$ if and only if $Q(\lambda)$ is a singleton.

We now assume $\lambda \in Z$ and we compute the right derivative $h_+'(\lambda)$. To this aim, let $v$ be any element of $\Sf$ such that $h_v(\lambda) = h(\lambda)$, and assume that it has mass $m = 2h(\lambda)^{\frac{2}{2-p}}\mu$ for some $\mu \in Q(\lambda)$. Then $h_+'(\lambda) \le m$, and since this happens for every minimizer of $h(\lambda)$,
\[
h_+'(\lambda) \le \inf_{\mu \in Q(\lambda)} 2h(\lambda)^{\frac{2}{2-p}}\mu =2 h(\lambda)^{\frac{2}{2-p}}\mu^-(\lambda).
\]
To prove the reversed inequality, let $(\lambda_n)_n$ be a sequence such that $h$ is differentiable at every $\lambda_n$, $\lambda_n> \lambda$ for every $n$ and $\lambda_n \to \lambda$ as $n \to \infty$. Letting $h'(\lambda_n) =m_n$, by the concavity of  $h$ we have for every $n \in \mathbb{N}$
\begin{equation}
\label{dis1}
h_+'(\lambda) \ge h_+'(\lambda_n) = h'(\lambda_n) = m_n= 2h(\lambda_n)^{\frac{2}{2-p}}\mu_n, 
\end{equation} 
for some $\mu_n \in Q(\lambda_n)$.
Since the sequence $m_n$ is bounded, we can assume without loss of generality that it converges to some $m$, and hence also $\mu_n$ converges to some $\mu$, as $h$ is continuous. We thus have sequences $\lambda_n \to \lambda$ and $(\mu_n)_n \subset Q(\lambda_n)$ with $\mu_n \to \mu$. By Assumption \ref{C}, $\mu \in \overline{Q(\lambda)}$, so that $\mu \ge \inf Q(\lambda) \ = \mu^-(\lambda)$. Letting $n\to \infty$ in \eqref{dis1}, we obtain
\[
h_+'(\lambda) \ge 2 h(\lambda)^{\frac{2}{2-p}}\mu \ge 2  h(\lambda)^{\frac{2}{2-p}}\mu^-(\lambda).
\]
The computation of the left derivative $h_-'(\lambda)$ is completely analogous.

Finally, recalling \eqref{J h}, we see that $\JJ$ is differentiable in $\interv \setminus Z$, where
\[
\JJ'(\lambda) = \kappa \frac{p}{p-2} h(\lambda)^{\frac{2}{p-2}}h'(\lambda) = h(\lambda)^{\frac{2}{p-2}}h(\lambda)^{\frac{2}{2-p}}\mu = \mu,
\]
where $2\mu$ is the mass of any action ground state in $\NL$ (recall that $Q(\lambda)$ is a singleton). The same computation works for the left and right derivatives, obtaining respectively $\JJ_-'(\lambda) = \mu^+(\lambda)$ and $\JJ_+'(\lambda) = \mu^-(\lambda)$ for every $\lambda \in Z$.
\end{proof}

\begin{proof}[Proof of Corollary \ref{cor:ovvio}]
If $\bar\lambda$ is such that there exist two action ground states $v_1,v_2\in\mathcal{N}_{\bar\lambda}$ with $\|v_1\|_2\neq\|v_2\|_2$, then $\mu^-(\bar\lambda) < \mu^+(\bar\lambda)$. Therefore, Theorem \ref{THM:action} yields $\JJ_-'(\bar\lambda)>\JJ_+'(\bar\lambda)$ and $\JJ$ is not differentiable at $\bar\lambda$.
\end{proof}

\section{Local minima}
\label{sec:local}

This section is devoted to the relation between local minima of the energy functional   and local minima of the action functional, namely Theorem \ref{THM:local}. Its proof  relies on the following propositions that compute and compare the second derivatives of the functionals.

We start by considering a critical point $u$ of $J_\lambda$ on $\NL$. The function $u$ has a certain mass $2\mu =\|u\|_2^2$, and is therefore a critical point of $E$ on $\MM_\mu$. Equivalently, we could start from a critical point $u$ of $E$ on $\MM_\mu$ and view it as a critical point of $J_\lambda$ on $\NL$, where $\lambda$ is the Lagrange multiplier associated with $u$.
To avoid confusion we denote by $\alevel$ the functional $J_\lambda$ considered as a map from $\NL$ to $\R$ and, similarly, $\elevel$ will denote  $E$ restricted to $\MM_\mu$.

We begin by computing the second derivative of $\alevel$ at $u$. 
Note that the tangent space to $\NL$ at $u$ is 

\[
T_{u} \NL = \left\{v \in H^1_0(\Omega) \; :\; \int_\Omega |u|^{p-2}u v \dx = 0 \right\}.
\]

\begin{proposition}
	\label{secondera}
	There results 
	\begin{equation}
	\label{secdera}
	\alevel''(u)v^2 =  \int_\Omega|\nabla v|^2 \dx +\lambda  \int_\Omega |v|^2\dx- (p-1)\int_\Omega |u|^{p-2}v^2 \dx,
	\end{equation}
	for every $v\in T_{u} \NL$.
\end{proposition}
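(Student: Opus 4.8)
The plan is to exploit the \emph{natural constraint} property of the Nehari manifold, already recalled in the Introduction: a critical point $u$ of $J_\lambda$ constrained to $\NL$ is in fact a free critical point, i.e.\ $J_\lambda'(u)=0$ in $H^{-1}(\Omega)$. Once this is established, the constrained second derivative $\alevel''(u)$ on the tangent space will simply coincide with the ordinary (unconstrained) Hessian $J_\lambda''(u)$ restricted to $T_u\NL$, and \eqref{secdera} will follow by a direct Fréchet computation.

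First I would make the natural-constraint statement quantitative. Writing the Nehari constraint as $G(u):=\|\nabla u\|_2^2+\lambda\|u\|_2^2-\|u\|_p^p$, the map $G$ is $C^2$ and one computes $G'(u)u=2\|\nabla u\|_2^2+2\lambda\|u\|_2^2-p\|u\|_p^p=(2-p)\|u\|_p^p\neq 0$ for $u\in\NL$, using the Nehari identity $\|\nabla u\|_2^2+\lambda\|u\|_2^2=\|u\|_p^p$. Hence $G'(u)\neq0$, so $\NL$ is a genuine $C^2$ manifold near $u$ and the Lagrange multiplier rule applies: there is $\nu\in\R$ with $J_\lambda'(u)=\nu\,G'(u)$. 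Testing this identity against $u$ and using $J_\lambda'(u)u=0$ (the very definition of $\NL$) together with $G'(u)u\neq0$ forces $\nu=0$, whence $J_\lambda'(u)=0$.

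Next I would fix any $C^2$ curve $\gamma:(-\delta,\delta)\to\NL$ with $\gamma(0)=u$ and $\dot\gamma(0)=v\in T_u\NL$, and differentiate $t\mapsto J_\lambda(\gamma(t))$ twice at $t=0$. The chain rule gives $\frac{d^2}{dt^2}\big|_{t=0}J_\lambda(\gamma(t))=J_\lambda''(u)[v,v]+J_\lambda'(u)[\ddot\gamma(0)]$, and the last term vanishes since $J_\lambda'(u)=0$. This simultaneously shows that the value is independent of the chosen curve, so it legitimately defines $\alevel''(u)v^2$, and that it equals the free Hessian. It then remains to compute $J_\lambda''(u)$: starting from $J_\lambda'(u)[w]=\int_\Omega\nabla u\cdot\nabla w\dx+\lambda\int_\Omega uw\dx-\int_\Omega|u|^{p-2}u\,w\dx$ and differentiating once more in the direction $v$, with $\frac{d}{dt}\big(|u+tv|^{p-2}(u+tv)\big)\big|_{t=0}=(p-1)|u|^{p-2}v$, one obtains exactly the three integrals of \eqref{secdera}.

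The only genuine subtlety, and the step I would handle with care, is the vanishing of the Lagrange multiplier $\nu$; everything else is a routine differentiation of the smooth terms of $J_\lambda$, the $C^2$ regularity of $u\mapsto\int_\Omega|u|^p\dx$ in the range $p\in(2,2^*)$ being standard. Note in passing that, once $\nu=0$ is known, formula \eqref{secdera} actually holds for \emph{every} $v\in H_0^1(\Omega)$; the restriction to $T_u\NL$ in the statement is retained only because it is the sole part relevant to the comparison of Hessians in the proof of Theorem \ref{THM:local}.
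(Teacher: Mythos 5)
Your proof is correct, and its skeleton is the same as the paper's: differentiate $J_\lambda$ twice along a curve in $\NL$ through $u$ with velocity $v$, dispose of the first-order term $J_\lambda'(u)\gamma''(0)$, and identify the result with the free Hessian $J_\lambda''(u)v^2$, whose computation gives \eqref{secdera}. The one genuine difference lies in how the first-order term is killed. You prove the full natural-constraint property $J_\lambda'(u)=0$ via the Lagrange multiplier rule (testing $J_\lambda'(u)=\nu G'(u)$ against $u$ and using $G'(u)u=(2-p)\|u\|_p^p\neq0$ on $\NL$), after which the term vanishes along \emph{any} admissible curve; this has the added benefits of showing that $\alevel''(u)v^2$ is curve-independent and that the right-hand side of \eqref{secdera} is just $J_\lambda''(u)v^2$ on all of $H_0^1(\Omega)$. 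The paper instead works with the single explicit curve $\gamma(t)=\sigma_\lambda(u+tv)(u+tv)$, checks that $g(t)=\sigma_\lambda(u+tv)$ satisfies $g'(0)=0$ (using that $u$ solves \eqref{nlseq} and that $v\in T_u\NL$), so that $\gamma''(0)=g''(0)u$ is parallel to $u$, and then cancels the term using only the Nehari identity $J_\lambda'(u)u=0$. Your route is the more standard and slightly more general one; the paper's is more self-contained, since it never needs $\NL$ to be a $C^2$ submanifold nor the existence of arbitrary $C^2$ curves with prescribed first- and second-order data at $u$ --- a point your argument quietly leans on (via the $C^2$ regularity of $G$, i.e.\ of the Nemytskii operator $u\mapsto|u|^{p-2}u$, which is mildly delicate for $p$ close to $2$), whereas the paper only ever differentiates explicit scalar functions of $t$.
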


\begin{proof} Let $v \in T_{u} \NL$. If  $\gamma : (-\delta,\delta) \to \NL$ is a smooth curve such that
	\[
	\gamma(0) = u,\qquad  \gamma'(0) = v,
	\]
	then 
	\begin{equation}
	\label{secderuno}
	\alevel''(u) v^2 = \frac{d^2}{dt^2} J_\lambda(\gamma(t))\Big|_{t=0}= J_\lambda''(u)[ \gamma'(0)^2] + J_\lambda'(u)  \gamma''(0).
	\end{equation}
	To carry out the computation, we set
	\begin{equation}
	\label{frac}
	g(t):= \sigma_\lambda(u+tv) = \left(\frac{\int_\Omega |\nabla u+t\nabla v|^2\dx + \lambda\int_\Omega |u+tv|^2\dx}{\int_\Omega |u+tv|^p\dx}\right)^{\frac1{p-2}},
	\end{equation}
	and we define, for $\delta$ small, $\gamma : (-\delta,\delta) \to \NL$ as
	\begin{equation}
	\label{gamma}
	\gamma(t) = g(t)(u+tv).
	\end{equation}
	Note that $\gamma(0) =u$. Denoting by $N(t)$ and $D(t)$ the numerator and the denominator in the right--hand side of \eqref{frac}, we see that
	\[
	g'(t)= \frac1{p-2}\left(\frac{N(t)}{D(t)}\right)^{\frac{3-p}{p-2}}\frac{N'(t)D(t)-N(t)D'(t)}{D(t)^2}.
	\] 
	Now $N(0) = D(0)$ because $u \in \NL$, and,  since $u$ is a critical point of $J_\lambda$,   
	\[
	N'(0) = 2\int_\Omega \nabla u\cdot \nabla v\dx +2\lambda \int_\Omega uv\dx = 2\int_\Omega |u|^{p-2}uv\dx =0,
	\]
	because $v \in T_u\NL$. Likewise, $D'(0) = p\int_\Omega |u|^{p-2}uv = 0$. Hence
	\[
	g'(0) =  \frac1{p-2}\left(\frac{N(0)}{D(0)}\right)^{\frac{3-p}{p-2}}\frac{N'(0)D(0)-N(0)D'(0)}{D(0)^2} =0,
	\]
	from which, differentiating \eqref{gamma}, we obtain
	\[
	\gamma'(0) = g'(0)u +g(0)v = v,\qquad \gamma''(0) = g''(0)u+2g'(0)v= g''(0)u.
	\] 
	Thus \eqref{secderuno} becomes
	\[
	\alevel''(u) v^2 =  J_\lambda''(u)[ \gamma'(0)^2] + J_\lambda'(u)  \gamma''(0) = J_\lambda''(u)v^2 + g''(0) J_\lambda'(u)u = 
	J_\lambda''(u)v^2
	\]
	again because $u\in \NL$. Computing the last term we finally obtain
	\[
	\alevel''(u) v^2 =  \int_\Omega|\nabla v|^2 \dx +\lambda  \int_\Omega |v|^2\dx- (p-1)\int_\Omega |u|^{p-2}v^2 \dx,\quad\text{ for every $v\in T_{u} \NL$}.\qedhere
	\]
\end{proof}

Similarly, we can compute the second derivative of the energy $E$, considered as a functional $\elevel$ on the manifold $\Hmu$, at the same $u$ as above. The tangent space to $\Hmu$ at $u$ is 

\[
T_{u} \Hmu= \left\{v \in H^1(\Omega) \; :\; \int_\Omega uv \dx = 0 \right\}.
\]

\begin{proposition}
	\label{secondere}
	There results 
	\begin{equation}
	\label{secdere}
	\elevel''(u)v^2 =  \int_\Omega|\nabla v|^2 \dx +\lambda  \int_\Omega |v|^2\dx- (p-1)\int_\Omega |u|^{p-2}v^2 \dx,
	\end{equation}
	for every $v\in T_{u} \Hmu$.
\end{proposition}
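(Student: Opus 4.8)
The plan is to reproduce the scheme of Proposition \ref{secondera}, based on the chain-rule identity that, for any smooth curve $\eta$ lying on $\Hmu$ with $\eta(0)=u$ and $\eta'(0)=v$,
\[
\elevel''(u)v^2 = \frac{d^2}{dt^2}E(\eta(t))\Big|_{t=0} = E''(u)[\eta'(0)^2] + E'(u)\eta''(0).
\]
A direct computation gives the ``flat'' second derivative $E''(u)v^2 = \int_\Omega|\nabla v|^2\dx - (p-1)\int_\Omega|u|^{p-2}v^2\dx$, which differs from the target \eqref{secdere} exactly by the summand $\lambda\int_\Omega|v|^2\dx$. The crucial structural difference from the Nehari case is therefore that the correction term $E'(u)\eta''(0)$ must \emph{not} be discarded: far from vanishing, it is precisely what reconstructs the missing $\lambda\int_\Omega|v|^2\dx$.

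First I would build an explicit admissible curve by $L^2$--normalizing $u+tv$. Setting
\[
c(t) := \frac{\sqrt{2\mu}}{\|u+tv\|_2}, \qquad \eta(t) := c(t)(u+tv),
\]
one has $\|\eta(t)\|_2^2 = 2\mu$, so $\eta(t)\in\Hmu$, and $\eta$ is smooth near $t=0$ since $\|u\|_2=\sqrt{2\mu}>0$. Because $v\in T_u\Hmu$ means $\int_\Omega uv\dx=0$, the square norm expands as $\|u+tv\|_2^2 = 2\mu + t^2\|v\|_2^2$; hence $c(0)=1$ and $c'(0)=0$. Differentiating $\eta$ and using $c'(0)=0$ then yields $\eta(0)=u$, $\eta'(0)=v$, and, most importantly, $\eta''(0)=c''(0)\,u$.

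The two ingredients left to compute are $c''(0)$ and $E'(u)u$. Differentiating $c(t)=\sqrt{2\mu}\,(2\mu+t^2\|v\|_2^2)^{-1/2}$ twice gives $c''(0) = -\|v\|_2^2/(2\mu)$. For the correction term I would invoke that $u$ is a constrained critical point: since $J_\lambda'(u)w = E'(u)w + \lambda\int_\Omega uw\dx$ for every admissible $w$ and $J_\lambda'(u)=0$, one gets $E'(u)w = -\lambda\int_\Omega uw\dx$, so in particular $E'(u)u = -\lambda\|u\|_2^2 = -2\lambda\mu$. Combining,
\[
E'(u)\eta''(0) = c''(0)\,E'(u)u = \left(-\frac{\|v\|_2^2}{2\mu}\right)(-2\lambda\mu) = \lambda\int_\Omega|v|^2\dx,
\]
and adding this to $E''(u)v^2$ gives exactly \eqref{secdere}.

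The only point requiring genuine care is this bookkeeping of the second-order correction, which is the precise counterpart of the step where, in Proposition \ref{secondera}, the analogous term dropped out thanks to $J_\lambda'(u)u=0$ on $\NL$. Here $\eta''(0)$ is again a multiple of $u$, but $E'(u)u\neq 0$, so one must track the exact value of $c''(0)$ together with the Lagrange-multiplier identity to recover $\lambda\int_\Omega|v|^2\dx$ rather than losing it. A secondary, purely formal, issue is that $\elevel''(u)v^2$ should be independent of the chosen curve; this follows from $u$ being a constrained critical point, and since the statement fixes the meaning of $\elevel''$ via a curve as in \eqref{secderuno}, it suffices to exhibit the normalized curve above and carry out the computation.
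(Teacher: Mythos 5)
Your proposal is correct and follows essentially the same route as the paper: the same $L^2$--normalized curve $\eta(t)=\sqrt{2\mu}\,\|u+tv\|_2^{-1}(u+tv)$, the same computation $\eta''(0)=-(2\mu)^{-1}\|v\|_2^2\,u$, and the same use of $E'(u)u=-2\lambda\mu$ to recover the term $\lambda\int_\Omega|v|^2\dx$. The only cosmetic difference is that you expand $\|u+tv\|_2^2=2\mu+t^2\|v\|_2^2$ before differentiating, whereas the paper differentiates the general expression and then invokes $\int_\Omega uv\dx=0$.
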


\begin{proof} Working as in the previous proof, we fix $v\in T_{u} \Hmu$
	and we define, for $\delta$ small, a smooth curve $\eta : (-\delta,\delta) \to \Hmu$ as
	\[
	\eta(t) = \frac{\sqrt{2\mu}}{\|u+tv\|_2}(u + tv)=: h(t)(u + tv).
	\]
	Note that $\eta (0) = u$. Differentiating $h$ yields
	\[
	h'(t) = -\sqrt{2\mu}\left(\int_\Omega |u+tv|^2\dx\right)^{-3/2}\int_\Omega (u+tv)v\dx
	\]
	and
	\[
	h''(t) = 3\sqrt{2\mu}\left(\int_\Omega |u+tv|^2\dx\right)^{-5/2}\left(\int_\Omega (u+tv)v\dx\right)^2 -\sqrt{2\mu}\left(\int_\Omega |u+tv|^2\dx\right)^{-3/2}\int_\Omega |v|^2\dx.
	\]
	Now, since $v\in  T_{u} \Hmu$, $\int_\Omega uv\dx = 0$, so that
	\[
	h'(0) = 0,\qquad h''(0) = -\frac1{2\mu}  \int_\Omega |v|^2\dx.
	\]
	Thus, differentiating $\eta$ we have
	\[
	\eta'(0)  =h'(0)u + h(0)v = v,\qquad \eta''(0) = h''(0)u  + 2h'(0)v = -\left(\frac1{2\mu}  \int_\Omega |v|^2\dx\right) u
	\]
	and 
	\begin{align*}
	\elevel''(u) v^2 &= \frac{d^2}{dt^2} E (\eta(t))\Big|_{t=0}= E''(u)[ \eta'(0)^2] + E'(u)  \eta''(0)=
	E''(u) v^2 - \left(\frac1{2\mu}  \int_\Omega |v|^2\dx\right) E'(u)u \\
	&= \int_\Omega |\nabla v|^2\dx - (p-1)\int_\Omega |u|^{p-2}v^2\dx -  \frac1{2\mu}  E'(u)u\int_\Omega |v|^2\dx .
	\end{align*}
	As $E'(u)u = -\lambda \int_\Omega |u|^2\dx = -2\lambda\mu$, we obtain \eqref{secdere}.
\end{proof}
Having determined the second derivatives of $\alevel$ and $\elevel$ at $u$, we can now compare them.

\begin{proposition}
	\label{compare} For every $v \in T_u\NL$ there exists $\varphi \in T_u\Hmu$ such that
	\begin{equation}
	\label{bind}
	\alevel''(u)v^2 = (p-2)\frac{\left( \int_\Omega |u|^{p-2}u\varphi\dx\right)^2}{ \int_\Omega |u|^p\dx} + \elevel''(u)\varphi^2.
	\end{equation}
	In particular, there exists a constant $C=C(u)>0$ such that
	\begin{equation}
	\label{eigen}
	\inf_{v \in T_u\NL \setminus\{0\}} \frac{\alevel''(u)v^2}{ \|v\|_2^2} \ge  C\inf_{\psi \in T_u \Hmu\setminus\{0\}}\frac{\elevel''(u)\psi^2}{ \|\psi\|_2^2}.
	\end{equation}
\end{proposition}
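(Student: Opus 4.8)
The plan is to exploit the fact that Propositions~\ref{secondera} and~\ref{secondere} produce \emph{one and the same} quadratic form,
\[
\Psi(w) := \int_\Omega|\nabla w|^2\dx + \lambda\int_\Omega|w|^2\dx - (p-1)\int_\Omega|u|^{p-2}w^2\dx,
\]
in the sense that $\alevel''(u)$ and $\elevel''(u)$ are the restrictions of this single $\Psi$ to $T_u\NL$ and $T_u\Hmu$ respectively; they differ only through the hyperplane on which they are evaluated. Proving \eqref{bind} thus amounts to comparing $\Psi$ on these two hyperplanes of $H_0^1(\Omega)$. Given $v\in T_u\NL$, I would introduce $\varphi := v - s\,u$ with $s := \left(\int_\Omega uv\dx\right)\big/\left(\int_\Omega|u|^2\dx\right)$, i.e. the $L^2$--orthogonal projection of $v$ onto the orthogonal complement of $u$; by the choice of $s$ one has $\int_\Omega u\varphi\dx = 0$, that is $\varphi\in T_u\Hmu$.

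The core of the identity is the cross term of the bilinear form $\Psi(\cdot,\cdot)$ associated with $\Psi$. Using that $u$ solves \eqref{nlseq}, namely $\int_\Omega\nabla u\cdot\nabla v\dx + \lambda\int_\Omega uv\dx = \int_\Omega|u|^{p-2}uv\dx$, a direct substitution gives
\[
\Psi(v,u) = (2-p)\int_\Omega|u|^{p-2}uv\dx,
\]
which \emph{vanishes} for every $v\in T_u\NL$. Hence, expanding $\Psi(\varphi) = \Psi(v-su) = \Psi(v) - 2s\,\Psi(v,u) + s^2\Psi(u)$, the middle term drops, and since $\Psi(u) = (2-p)\int_\Omega|u|^p\dx$ (by the same substitution, or directly from $u\in\NL$), I obtain $\Psi(v) = \Psi(\varphi) + (p-2)s^2\int_\Omega|u|^p\dx$. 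Finally I would make $s$ intrinsic: as $v\in T_u\NL$, one has $\int_\Omega|u|^{p-2}u\varphi\dx = -s\int_\Omega|u|^p\dx$, so that $(p-2)s^2\int_\Omega|u|^p\dx = (p-2)\big(\int_\Omega|u|^{p-2}u\varphi\dx\big)^2\big/\int_\Omega|u|^p\dx$, which is precisely \eqref{bind}.

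For the Rayleigh--quotient estimate \eqref{eigen}, the structural ingredient is the Pythagorean identity $\|v\|_2^2 = \|\varphi\|_2^2 + s^2\|u\|_2^2$, coming from the orthogonal splitting $v=\varphi+su$. This yields $\|\varphi\|_2\le\|v\|_2$, an inequality pointing the \emph{wrong} way for a naive quotient bound; the decisive point, and the only real subtlety, is that the extra nonnegative term in \eqref{bind} carries exactly the $L^2$--mass $s^2\|u\|_2^2$ that is lost in passing from $\|v\|_2^2$ to $\|\varphi\|_2^2$. Setting $\nu := \inf_{\psi\in T_u\Hmu\setminus\{0\}}\elevel''(u)\psi^2/\|\psi\|_2^2$ and $\beta := s^2\|u\|_2^2\ge0$, the identity \eqref{bind} together with $\elevel''(u)\varphi^2\ge\nu\|\varphi\|_2^2$ gives
\[
\frac{\alevel''(u)v^2}{\|v\|_2^2} \ge \frac{\big((p-2)\|u\|_p^p/\|u\|_2^2\big)\,\beta + \nu\,\|\varphi\|_2^2}{\beta + \|\varphi\|_2^2},
\]
a weighted average of the two numbers $(p-2)\|u\|_p^p/\|u\|_2^2$ and $\nu$, hence bounded below by their minimum. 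Since the former is a \emph{strictly positive} constant depending only on $u$, this minimum is a positive multiple of $\nu$ whenever $\nu>0$ (and the estimate is trivial when $\nu\le0$), yielding \eqref{eigen} with some $C=C(u)>0$. A last point to verify is that $\varphi\neq0$ whenever $v\neq0$, so the quotients make sense: indeed $\varphi=0$ forces $v=su$, whence $v\in T_u\NL$ gives $s\int_\Omega|u|^p\dx=0$, i.e. $s=0$ and $v=0$.
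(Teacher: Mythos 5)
Your proof of \eqref{bind} is correct and is essentially the paper's own argument: the same $L^2$--orthogonal decomposition $v=\alpha u+\varphi$ (your $s$ is the paper's $\alpha$ in \eqref{uphi}), the same use of the equation satisfied by $u$ to kill the cross term, and the same identity $\int_\Omega|u|^{p-2}u\varphi\dx=-\alpha\int_\Omega|u|^p\dx$; phrasing it through the polarized bilinear form $\Psi(\cdot,\cdot)$ is only a cosmetic repackaging. Where you genuinely diverge is in \eqref{eigen}. The paper simply discards the nonnegative first term of \eqref{bind}, arrives at the analogue of your estimate with the quotient $\|\varphi\|_2^2/\|\alpha u+\varphi\|_2^2$ in front (its \eqref{st1}), and must then bound $\alpha^2\|u\|_2^2/\|\varphi\|_2^2$ uniformly from above; this is done via Cauchy--Schwarz on $\int_\Omega|u|^{p-2}u\varphi\dx$ together with the elliptic-regularity fact that $u\in L^{2p-2}(\Omega)$, producing an explicit constant $C=1/(1+C_1)$ independent of the energy Hessian. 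You instead keep the first term, observe that it carries exactly the $L^2$--mass $s^2\|u\|_2^2$ lost in passing from $v$ to $\varphi$, and conclude by a convex-combination argument: the Rayleigh quotient is bounded below by $\min\bigl((p-2)\|u\|_p^p/\|u\|_2^2,\nu\bigr)$. This is more elementary (no elliptic regularity, no H\"older), at the mild cost that your constant depends on $\nu$ itself in the case $\nu>0$ --- harmless, since the statement only requires $C=C(u)>0$, and for the application to Theorem \ref{THM:local} only the sign matters. Your closing checks (that $\varphi\neq0$ when $v\neq0$, and the separate treatment of $\nu\le0$ with $C=1$) close the argument; the proposal is complete and correct.
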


\begin{proof} For every  $v \in T_u\NL$, there exist $\alpha\in\R$ and $\varphi \in  T_u\Hmu$ such that $v = \alpha u + \varphi$. Indeed it is sufficient to write
	\begin{equation}
	\label{uphi}
	v= \frac{\int_\Omega uv\dx}{\int_\Omega u^2\dx} \;u + \left(v - \frac{\int_\Omega uv\dx}{\int_\Omega u^2\dx}\; u\right) =:\alpha u + \varphi
	\end{equation}
	and note that $\varphi   \in  T_u\Hmu$ since $\int_\Omega\varphi u \dx =0$. We now insert this expression for $v$ in \eqref{secdera} and compute
	\begin{align}
	\label{bridge}
	\alevel''(u)v^2 &= \alevel''(u)(\alpha u + \varphi)^2 \nonumber
	\\&= \int_\Omega |\alpha \nabla u + \nabla \varphi|^2\dx + \lambda \int_\Omega |\alpha u + \varphi|^2\dx
	-(p-1) \int_\Omega |u|^{p-2} |\alpha u + \varphi|^2\dx \nonumber\\
	&= \alpha^2 \left( \int_\Omega |\nabla u|^2 + \lambda |u|^2 -(p-1)|u|^p\dx\right) + 
	2\alpha \left( \int_\Omega \nabla u\cdot \nabla\varphi + \lambda u\varphi -(p-1) |u|^{p-2}u\varphi\dx\right)\nonumber \\
	&+ \int_\Omega |\nabla\varphi|^2 + \lambda |\varphi|^2 -(p-1)|u|^{p-2} \varphi^2\dx.
	\end{align}
	Now $u\in\NL$ and is a critical point of $\alevel$. Therefore
	\[
	\int_\Omega |\nabla u|^2 + \lambda |u|^2\dx=  \int_\Omega |u|^p\dx, \qquad \int_\Omega \nabla u\cdot\nabla\varphi + \lambda u\varphi \dx=  \int_\Omega |u|^{p-2}u\varphi\dx.
	\]
	Noticing also that the last line of \eqref{bridge} is $\elevel''(u)\varphi^2$, we can rewrite \eqref{bridge} as
	\[
	\alevel''(u)v^2 = \alpha^2(2-p) \int_\Omega |u|^p\dx + 2\alpha(2-p)  \int_\Omega |u|^{p-2}u\varphi\dx + \elevel''(u)\varphi^2.
	\]
	But since $v = \alpha u + \varphi$, we see that $\int_\Omega |u|^{p-2}u\varphi \dx = \int_\Omega |u|^{p-2}uv \dx -\alpha\int_\Omega |u|^p\dx= -\alpha \int_\Omega |u|^p \dx$, which plugged in the previous equality yields
	\[
	\alevel''(u)v^2 = \alpha^2(p-2) \int_\Omega |u|^p\dx  + \elevel''(u)\varphi^2.
	\]
	Finally, multiplying \eqref{uphi} by $|u|^{p-2}u$ and integrating we see that
	\begin{equation}
	\label{alfa2}
	\alpha = -\frac{\int_\Omega |u|^{p-2}u \varphi \dx}{\int_\Omega |u|^p\dx}\,,
	\end{equation}
	which combined with the previous equality gives \eqref{bind}.
	
	To prove the second part, we observe that from \eqref{bind}
	\[
	\alevel''(u)v^2 \ge  \|\varphi\|_2^2 \frac{\elevel''(u)\varphi^2}{\|\varphi\|_2^2}\ge  \|\varphi\|_2^2 \inf_{\psi \in T_u \Hmu\setminus\{0\}}\frac{\elevel''(u)\psi^2}{ \|\psi\|_2^2},
	\]
	so that, recalling \eqref{uphi},
	\begin{equation}
	\label{st1}
	\frac{\alevel''(u)v^2}{ \|v\|_2^2} \ge \frac{ \|\varphi\|_2^2}{ \|\alpha u +\varphi\|_2^2} \inf_{\psi \in T_u \Hmu\setminus\{0\}}\frac{\elevel''(u)\psi^2}{ \|\psi\|_2^2}.
	\end{equation}
	Therefore it is sufficient to show that the quantity $\frac{ \|\varphi\|_2^2}{ \|\alpha u +\varphi\|_2^2}$ is uniformly bounded away from zero. 
	
	Since $\varphi \in T_u\MM_\mu$,
	\[
	\frac{ \|\varphi\|_2^2}{ \|\alpha u +\varphi\|_2^2} = \frac{ \|\varphi\|_2^2}{ \|\varphi\|_2^2 +\alpha^2  \|u\|_2^2} =
	\frac{ 1}{1+ \alpha^2 \|u\|_2^2/\|\varphi\|_2^2}
	\]
	and we only have to show that $\alpha^2 \|u\|_2^2/\|\varphi\|_2^2$ is uniformly bounded from above. As $u$ is a critical point of $J_\lambda$, it solves the corresponding nonlinear Schr\"odinger equation \eqref{nlseq} and, by standard elliptic estimates, $u \in L^q(\Omega)$ for every $q \ge 2$. Thus, by \eqref{alfa2},
	\[
	\alpha^2 \frac{\|u\|_2^2}{\|\varphi\|_2^2} = \frac{\left(\int_\Omega |u|^{p-2}u \varphi \dx\right)^2}{\left(\int_\Omega |u|^p\dx\right)^2}\frac{\|u\|_2^2}{\|\varphi\|_2^2} \le \frac{\|u\|_{2p-2}^{2p-2} \|\varphi\|_2^2\|u\|_2^2}{\|u\|_p^{2p} \|\varphi\|_2^2 } =
	\frac{\|u\|_{2p-2}^{2p-2}\|u\|_2^2}{\|u\|_p^{2p} } =:C_1
	\]
	for every $\varphi \in T_u\MM_\mu$.
	
	So,  from \eqref{st1}, we can write
	\[
	\frac{\alevel''(u)v^2}{ \|v\|_2^2} \ge C \inf_{\psi \in T_u \Hmu\setminus\{0\}}\frac{\elevel''(u)\psi^2}{ \|\psi\|_2^2}, \quad C= \frac1{1+C_1},
	\]
	and taking the infimum with respect to $v$, \eqref{eigen} follows.
\end{proof}

\begin{proof}[Proof of Theorem \ref{THM:local}] It is a straightforward consequence of \eqref{eigen}.
\end{proof}

\begin{remark} Formula \eqref{bind} seems to suggest that degenerate (local) minimizers of the energy could be nondegenerate as  critical points of the action, due to the presence of the first term in the right--hand side of \eqref{bind}. This in general is false. For example, if $\Omega =\R^N$, then the energy ground states of mass $\mu$ are the family of solitons $\phi_y=\phi(\,\cdot-y\,)$, where $y\in \R^N$ and $\phi$ is the soliton of mass $\mu$ centered at the origin. The solitons $\phi_y$ are also the action ground states for $\alevel$ on $\NL$ with $\lambda = \LL(\phi)$. So each $\phi_y$ is a degenerate minimum for $\elevel$ on $\Hmu$ {\em and also} a degenerate minimum for $\alevel$ on $\NL$.
\end{remark}

\section{Applications}
\label{sec:app}
This final section discusses the possible application the preceding results  both to bounded and to unbounded domains of $\R^N$. In particular, we provide examples of domains where $\JJ$ is a continuous function on $\R$ and Assumptions \ref{assE}--\ref{C} are fulfilled.

\subsection{Bounded domains}
\label{subsec:bound}

If $\Omega\subset\R^N$ is bounded, then it is readily seen that when $p\in(2,2+4/N)$ there exist both an action ground state in $\NL$ for every $\lambda\in(-\lambda_\Omega,+\infty)$ and an energy ground state in $\MM_\mu$ for every $\mu\in(0,+\infty)$, so that Assumption \ref{assE} is satisfied. By Remark \ref{rem:lomega}, we also have that $\JJ$ is continuous on $\R$. Moreover, the validity of Assumption \ref{C} is straightforward.

\begin{proposition}
\label{compact}
Assumption \ref{C} holds.
\end{proposition}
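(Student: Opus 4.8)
The plan is to prove Assumption \ref{C} by a direct compactness argument, which is available precisely because $\Omega$ is bounded. Given sequences $\lambda_n\to\lambda\in(-\lambda_\Omega,+\infty)$ and $\mu_n\in Q(\lambda_n)$ with $\mu_n\to\mu$, I would choose for each $n$ an action ground state $u_n\in\NN_{\lambda_n}$ with $\frac12\|u_n\|_2^2=\mu_n$, and aim to show that a subsequence of $(u_n)$ converges to an action ground state $u\in\NL$ of mass $\mu$. This in fact yields the stronger conclusion $\mu\in Q(\lambda)\subseteq\overline{Q(\lambda)}$.

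First I would prove that $(u_n)$ is bounded in $H_0^1(\Omega)$. By \eqref{J tilde} and the continuity of $\JJ$ on $(-\lambda_\Omega,+\infty)$ (Lemma \ref{lem:J}), $\kappa\|u_n\|_p^p=J_{\lambda_n}(u_n)=\JJ(\lambda_n)\to\JJ(\lambda)$, so $\|u_n\|_p^p$ is bounded and bounded away from $0$. The bound $\|u\|_2^2/\|u\|_p^p\le 1/(\lambda+\lambda_\Omega)$ valid on the Nehari manifold (established within the proof of Lemma \ref{lem:J}), together with $\lambda_n\to\lambda>-\lambda_\Omega$, then bounds $\|u_n\|_2^2$; finally the Nehari identity $\|\nabla u_n\|_2^2=\|u_n\|_p^p-\lambda_n\|u_n\|_2^2$ bounds $\|\nabla u_n\|_2^2$. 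Since $\Omega$ is bounded, the embedding $H_0^1(\Omega)\hookrightarrow L^q(\Omega)$ is compact for every $q\in[2,2^*)$, so up to a subsequence $u_n\rightharpoonup u$ in $H_0^1(\Omega)$ and $u_n\to u$ strongly in $L^2$ and $L^p$. In particular $\|u\|_p^p=\JJ(\lambda)/\kappa>0$, so $u\neq0$, and $\frac12\|u\|_2^2=\lim_n\frac12\|u_n\|_2^2=\mu$.

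It then remains to show that $u$ is an action ground state in $\NL$. Passing to the $\liminf$ in the Nehari identities $\|\nabla u_n\|_2^2+\lambda_n\|u_n\|_2^2=\|u_n\|_p^p$, and using weak lower semicontinuity of the Dirichlet norm together with the strong $L^2$ and $L^p$ convergence, gives $\|\nabla u\|_2^2+\lambda\|u\|_2^2\le\|u\|_p^p$, i.e. $\sigma_\lambda(u)\le1$ by \eqref{sigma}. Since $u\neq0$ and $\lambda>-\lambda_\Omega$, we have $\|\nabla u\|_2^2+\lambda\|u\|_2^2\ge(\lambda+\lambda_\Omega)\|u\|_2^2>0$, so $\sigma_\lambda(u)u\in\NL$ is well defined and
\[
\JJ(\lambda)\le J_\lambda(\sigma_\lambda(u)u)=\kappa\,\sigma_\lambda(u)^p\|u\|_p^p\le\kappa\|u\|_p^p=\JJ(\lambda).
\]
Hence every inequality is an equality: $\sigma_\lambda(u)=1$ forces $u\in\NL$, and $J_\lambda(u)=\kappa\|u\|_p^p=\JJ(\lambda)$ shows that $u$ is an action ground state. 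Therefore $\mu\in Q(\lambda)$, and a fortiori $\mu\in\overline{Q(\lambda)}$, which is the claim.

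The only genuinely delicate point is the $H^1$ boundedness when $\lambda_n$ may be negative, i.e. $\lambda_n\in(-\lambda_\Omega,0)$: here the naive use of the Nehari identity fails to control $\|\nabla u_n\|_2$ directly, and one must exploit the Poincar\'e-type estimate on $\NN_{\lambda_n}$ together with the fact that $\lambda_n$ stays bounded away from $-\lambda_\Omega$. Everything else is standard weak-convergence and semicontinuity reasoning, and the compactness of the Sobolev embedding---available precisely because $\Omega$ is bounded---is exactly what makes the limit $u$ a genuine ground state rather than a mere subsolution.
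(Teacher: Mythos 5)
Your proof is correct and follows essentially the same route as the paper's: extract ground states $u_n$, obtain $H^1_0$-boundedness, use the compact Sobolev embedding and weak lower semicontinuity to get $\sigma_\lambda(u)\le 1$, rule out strict inequality by minimality, and conclude $\mu\in Q(\lambda)$. You are in fact more explicit than the paper on the boundedness step (the paper simply invokes the uniform boundedness of $\JJ$ on bounded subsets of $\interv$, while you spell out the Poincar\'e-type estimate on the Nehari manifold), which is a welcome clarification rather than a divergence.
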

\begin{proof}
		Let $\lambda\in(-\lambda_\Omega,+\infty)$ be given and let $(\lambda_n)_n$ be a sequence satisfying $\lambda_n \to \lambda$ as $n \to \infty$. Let also $\mu_n \in Q(\lambda_n)$ be such that $\mu_n \to \mu$. By definition, for every $n$ there exists an action ground state $u_n \in \NN_{\lambda_n}$ such that $\|u_n\|_2^2 = 2\mu_n$. The uniform boundedness of $\JJ$ on bounded sets of $\interv$ implies that $(u_n)_n$ is bounded in $H_0^1(\Omega)$. Hence, there exists $u\in H_0^1(\Omega)$ such that $u_n\rightharpoonup u$ in $H_0^1(\Omega)$ and $u_n\to u$ in $L^q(\Omega)$, for every $q\in[2,2^*)$. On the one hand, by the continuity of $\JJ$ one has $\JJ(\lambda)=\kappa\|u\|_p^p$. On the other hand, by weak lower semicontinuity, $\sigma_\lambda(u)\leq1$. Moreover, if $\sigma_\lambda(u)<1$, then 
\[
\JJ(\lambda)\leq \JJ_\lambda(\sigma_\lambda(u)u)=\kappa\sigma_\lambda(u)^p\|u\|_p^p<\kappa\|u\|_p^p=\JJ(\lambda),
\]
a contradiction. Thus $\sigma_\lambda(u)=1$, that is, $u$ is an action ground state in $\NL$. Since
\[
\mu = \lim_n \mu_n = \lim_n\f{\|u_n\|_2^2}2 = \f{\|u\|_2^2}2,
\]
we see that $\mu \in Q(\lambda)$.
\end{proof}

\subsection{Unbounded domains}
\label{subsec:unbound}

If $\Omega$ is unbounded, it can be easily shown that in general there is no action ground state in $\NL$, for any $\lambda$.
Indeed, since  $H^1_0(\Omega) \subset H^1(\R^N)$ (after extending functions to $0$ outside $\Omega$), we immediately see that for every $\lambda >0$
\begin{equation}
\label{RN}
\JJ_{\R^N}(\lambda) \le \JJ(\lambda).
\end{equation}
Here $\JJ_{\R^N}$ denotes the minimum of the action on the associated Nehari manifold in $H^1(\R^N)$.
Suppose now that the unbounded set $\Omega$ contains balls of arbitrary radius. Then one easily sees that \eqref{RN} is an equality, so that  if $\JJ(\lambda)$ is attained by some $u \in H^1_0(\Omega)$, then $u$ is also an action ground state for $J_\lambda$ on $\R^N$, namely a soliton. But since $u$ vanishes in $\R^N \setminus \Omega$, this is impossible, unless $\Omega = \R^N$. The same argument shows that on such domains there is no energy ground state in $\MM_\mu$ for any $\mu$.

Thus, it makes no sense to discuss the problem when $\Omega$ contains balls of arbitrary radius. The class of  domains $\Omega$ that do not satisfy this property is quite large, and we consider here only a model case, to show that everything we said so far works also on some unbounded domains. Many more cases could be treated with essentially the same arguments as those we now outline.

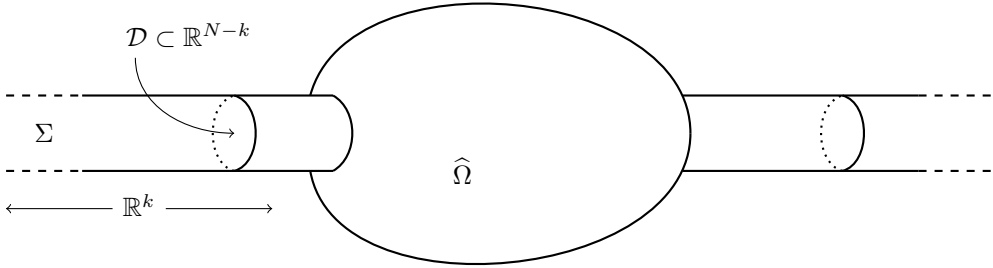
\begin{figure}[t]
	\centering
	\begin{tikzpicture}
	
	\draw[thick,-] (0,0)--(3.3,0);
	\draw[thick,-] (0,1)--(3.3,1);
	
	\draw[thick,dotted] (2,0) to [in= 195, out= 165 ](2,1);
	\draw[thick] (2,0) to [in= -15, out= 15 ](2,1);
	
	\draw[thick,dotted] (10,0) to [in= 195, out= 165 ](10,1); %
	\draw[thick] (10,0) to [in= -15, out= 15 ](10,1);
	
	\draw[thick] (3.3,0) to [in=-30, out=30] (3.3,1);

	\draw[thick,dashed] (-1,0)--(0,0);
	\draw[thick,dashed] (-1,1)--(0,1);
	
	\draw[thick,-] (7.9,0)--(11,0);
	\draw[thick,-] (7.9,1)--(11,1);
	\draw[thick,dashed] (11,0)--(12,0);
	\draw[thick,dashed] (11,1)--(12,1);
	
	\draw[thick] (3,0) to [in=-90, out=-80] (8,.5);
		
	\draw[thick] (3,1) to [in=90,out=75] (8,.5);
	
	\node at (1.4,1.8) [infinito] {$\DD\subset \R^{N-k}$};
	\draw[->] (.7,1.5) to[in=-180,out=-90](2,.5);
	
	\draw[->] (.4,-.5)--(-1,-.5);
	\draw[->] (1.1,-.5)--(2.5,-.5);
	\node at (.75,-.45) [infinito] {$\R^k$};
	
	\node at (5,0) [infinito] {$\widehat\Omega$};
	\node at (-0.5,0.5) [infinito] {$\Sigma$};
	
	\end{tikzpicture}
	\caption{Example of an unbounded domain $\Omega$ as in Section \ref{subsec:unbound}.}
	\label{fig:omega}
\end{figure}

In what follows, we take a bounded open set $\DD$ in $\R^{N-k}$ (for some $k   \in \{1,\dots, N-1\}$) and 
we consider the cylinder $\Sigma \subset \R^N$ defined as
\[
\Sigma = \DD \times \R^k.
\]
Finally, given a bounded  set $\widehat \Omega \subset \R^N$ such that $\widehat\Omega\cap\Sigma\neq\emptyset$ and $\widehat\Omega \setminus \Sigma \ne \emptyset$, we define  (see Figure \ref{fig:omega})
\begin{equation}
\label{omega}
\Omega:=\widehat{\Omega}\cup\Sigma.
\end{equation}
We begin by recalling some properties of the problem of action ground states on the cylinder $\Sigma$. To this aim we define
\[
J^\infty_\lambda (v) = \f12\|\nabla u\|_{L^2(\Sigma)}^2 + \f\lambda2 \|v\|_{L^2(\Sigma)}^2 -\frac1p\|u\|_{L^p(\Sigma)}^p,
\]
\[
\NL ^\infty= \left\{v\in H_0^1(\Sigma) \setminus \{0\}\; : \; \|\nabla v\|_{L^2(\Sigma)}^2 + \lambda\|v\|_{L^2(\Sigma)}^2 = \|v\|_{L^p(\Sigma)}^p\right\}
\]
and
\[
\JJ^\infty(\lambda):=\inf_{v\in\NL^\infty}J^\infty_\lambda(v).
\]
The following well--known result can be found for instance in \cite[Lemma 3]{Est83}.
\begin{lemma}
\label{bottom} Let $\lambda_1(\DD)$ be the first Dirichlet eigenvalue on $\DD$. Then
\[
\lambda_\Sigma :=\inf_{u\in H_0^1(\Sigma)}\frac{\|\nabla u\|_{L^2(\Sigma)}^2}{\|u\|_{L^2(\Sigma)}^2} = \lambda_1(\DD).
\]
\end{lemma}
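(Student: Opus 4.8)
The plan is to prove the two inequalities $\lambda_\Sigma \le \lambda_1(\DD)$ and $\lambda_\Sigma \ge \lambda_1(\DD)$ separately, exploiting the product structure $\Sigma = \DD \times \R^k$. Throughout I write points of $\Sigma$ as $(y,z)$ with $y \in \DD \subset \R^{N-k}$ and $z \in \R^k$, and I denote by $\nabla_y$ and $\nabla_z$ the corresponding partial gradients, so that $|\nabla u|^2 = |\nabla_y u|^2 + |\nabla_z u|^2$.

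For the upper bound I would use separated trial functions. Let $\varphi_1 \in H_0^1(\DD)$ be a first Dirichlet eigenfunction on $\DD$, normalized so that $\|\varphi_1\|_{L^2(\DD)} = 1$, and for $\psi \in C_c^\infty(\R^k)$ set $u(y,z) = \varphi_1(y)\psi(z)$. Using $\int_\DD |\nabla_y \varphi_1|^2\dy = \lambda_1(\DD)$ together with Fubini, the Rayleigh quotient of $u$ equals
\[
\frac{\|\nabla u\|_{L^2(\Sigma)}^2}{\|u\|_{L^2(\Sigma)}^2} = \lambda_1(\DD) + \frac{\|\nabla \psi\|_{L^2(\R^k)}^2}{\|\psi\|_{L^2(\R^k)}^2}.
\]
Choosing a spreading family $\psi_R(z) = \psi_0(z/R)$ makes the last term of order $R^{-2}$, hence infinitesimal as $R \to +\infty$, which gives $\lambda_\Sigma \le \lambda_1(\DD)$.

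For the lower bound I would slice in the $z$ variable. It suffices to argue for $u \in C_c^\infty(\Sigma)$ and then recover the general case by density of $C_c^\infty(\Sigma)$ in $H_0^1(\Sigma)$. For such $u$, each slice $y \mapsto u(y,z)$ lies in $C_c^\infty(\DD)$ (the support of $u$ being compact in the open cylinder, hence bounded away from $\partial\DD \times \R^k$), so the variational characterization of $\lambda_1(\DD)$ yields, for every $z$,
\[
\int_\DD |\nabla_y u(y,z)|^2\dy \ge \lambda_1(\DD) \int_\DD |u(y,z)|^2\dy.
\]
Integrating over $z \in \R^k$ and discarding the nonnegative contribution $\int_\Sigma |\nabla_z u|^2$ gives $\|\nabla u\|_{L^2(\Sigma)}^2 \ge \lambda_1(\DD)\|u\|_{L^2(\Sigma)}^2$, whence $\lambda_\Sigma \ge \lambda_1(\DD)$. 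Combining the two bounds closes the argument.

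The only delicate point is the slicing step, where one must guarantee that the sliced functions genuinely satisfy the Dirichlet condition on $\partial\DD$. Working first with $C_c^\infty(\Sigma)$ sidesteps any trace subtlety, since each slice is then smooth and compactly supported inside $\DD$; the passage to an arbitrary $u \in H_0^1(\Sigma)$ is then immediate, as both sides of the inequality $\|\nabla u\|_{L^2(\Sigma)}^2 \ge \lambda_1(\DD)\|u\|_{L^2(\Sigma)}^2$ are continuous with respect to the $H^1$ norm.
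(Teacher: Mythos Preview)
Your argument is correct. The paper does not actually supply a proof of this lemma: it merely cites it as a well--known fact, referring to \cite[Lemma 3]{Est83}. Your approach---separated trial functions $\varphi_1(y)\psi_R(z)$ with $\psi_R$ spreading for the upper bound, and the slice--wise Poincar\'e inequality on $\DD$ (first for $C_c^\infty(\Sigma)$, then by density) for the lower bound---is exactly the standard proof one would expect to find behind that citation, and all steps are sound.
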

Even though $\lambda_\Sigma$ is clearly not attained, the action ground state  level is continuous on $\R$. In view of Remark \ref{rem:lomega}, this follows from the next lemma.
\begin{lemma}
\label{lsigma}
	The action ground state  level on $\Sigma$ satisfies $\JJ^\infty(-\lambda_\Sigma)=0$.
\end{lemma}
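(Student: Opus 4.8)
The plan is to exploit the product structure $\Sigma=\DD\times\R^k$ and to test the problem with functions that spread their mass along the unbounded directions $z\in\R^k$, so as to make the spectral gap at the bottom of the spectrum vanish. Since $J^\infty_\lambda(u)=\kappa\|u\|_{L^p(\Sigma)}^p$ for every $u\in\NL^\infty$, the level $\JJ^\infty$ is nonnegative, so it is enough to exhibit $L^p$--normalized functions on which the relevant energy gap is arbitrarily small. More precisely, the purely algebraic identity behind \eqref{J h} holds verbatim on $\Sigma$ (at $\lambda=-\lambda_\Sigma$ the quantity $\|\nabla v\|_{L^2(\Sigma)}^2-\lambda_\Sigma\|v\|_{L^2(\Sigma)}^2$ is strictly positive for each $v\neq0$, since $\lambda_\Sigma$ is not attained), giving $\JJ^\infty(\lambda)=\kappa\,h^\infty(\lambda)^{p/(p-2)}$ with
\[
h^\infty(\lambda):=\inf\left\{\|\nabla v\|_{L^2(\Sigma)}^2+\lambda\|v\|_{L^2(\Sigma)}^2\ :\ v\in H_0^1(\Sigma),\ \|v\|_{L^p(\Sigma)}=1\right\}.
\]
Thus the statement reduces to proving $h^\infty(-\lambda_\Sigma)=0$.

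To this end I would let $\varphi_1\in H_0^1(\DD)$ be the first Dirichlet eigenfunction on $\DD$, so that $\|\nabla\varphi_1\|_{L^2(\DD)}^2=\lambda_1(\DD)\|\varphi_1\|_{L^2(\DD)}^2=\lambda_\Sigma\|\varphi_1\|_{L^2(\DD)}^2$ by Lemma \ref{bottom}. Fixing any $\psi\in C_c^\infty(\R^k)\setminus\{0\}$, writing $x=(y,z)$ with $y\in\DD$, $z\in\R^k$, and setting $\psi_n(z):=\psi(z/n)$ and $w_n(y,z):=\varphi_1(y)\psi_n(z)$, each $w_n$ lies in $H_0^1(\Sigma)$ because $\varphi_1$ vanishes on $\partial\DD$. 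A direct separation--of--variables computation would then give
\[
\|\nabla w_n\|_{L^2(\Sigma)}^2-\lambda_\Sigma\|w_n\|_{L^2(\Sigma)}^2=\|\varphi_1\|_{L^2(\DD)}^2\,\|\nabla\psi_n\|_{L^2(\R^k)}^2,\qquad \|w_n\|_{L^p(\Sigma)}^p=\|\varphi_1\|_{L^p(\DD)}^p\,\|\psi_n\|_{L^p(\R^k)}^p,
\]
the crucial feature being that the eigenfunction relation makes the coefficient of $\|w_n\|_{L^2(\Sigma)}^2$ cancel exactly at $\lambda=-\lambda_\Sigma$. Normalizing $v_n:=w_n/\|w_n\|_{L^p(\Sigma)}$, I would obtain
\[
h^\infty(-\lambda_\Sigma)\le\frac{\|\varphi_1\|_{L^2(\DD)}^2}{\|\varphi_1\|_{L^p(\DD)}^2}\cdot\frac{\|\nabla\psi_n\|_{L^2(\R^k)}^2}{\|\psi_n\|_{L^p(\R^k)}^2}.
\]

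It then remains to control the scaling of the last ratio: a change of variables gives $\|\nabla\psi_n\|_{L^2(\R^k)}^2=n^{k-2}\|\nabla\psi\|_{L^2(\R^k)}^2$ and $\|\psi_n\|_{L^p(\R^k)}^2=n^{2k/p}\|\psi\|_{L^p(\R^k)}^2$, so the gap is a constant multiple of $n^{\,k-2-2k/p}$. The heart of the matter, and the only genuine obstacle, is checking that this exponent is negative, i.e. that $k<2p/(p-2)$; this is exactly where the subcriticality of $p$ must be used. Since $p\mapsto 2p/(p-2)$ is decreasing and equals $N$ at $p=2^*$, the bound $p<2^*$ forces $2p/(p-2)>N$ (for $N=1,2$ this reads $2p/(p-2)>2\ge N$), while $k\le N-1<N$; hence $k<2p/(p-2)$, the exponent is strictly negative, the gap tends to $0$ as $n\to\infty$, and therefore $h^\infty(-\lambda_\Sigma)=0$, which yields $\JJ^\infty(-\lambda_\Sigma)=0$ as claimed.
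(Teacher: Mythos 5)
Your proof is correct and follows essentially the same route as the paper: you test with $\varphi_1(y)\psi(z/n)$, use the eigenfunction relation to cancel the quadratic part exactly at $\lambda=-\lambda_\Sigma$, and check that the scaling exponent $k-2-2k/p<0$, which is precisely the paper's condition $\frac{2p}{p-2}-k>0$ under the substitution $\theta=1/n$. The only cosmetic difference is that you phrase the reduction through the auxiliary level $h^\infty$ of \eqref{J h} rather than projecting onto $\NL^\infty$ via $\sigma_\lambda$, and your justification that this identity is legitimate at $\lambda=-\lambda_\Sigma$ (positivity of the quadratic form since $\lambda_\Sigma$ is not attained) is sound.
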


\begin{proof} Let $\varphi_1\in H_0^1(\DD)$ be an eigenfunction associated to $\lambda_1(\DD) = \lambda_\Sigma$ and let $\psi \in C_0^\infty(\R^k) \setminus\{0\}$. For $\theta >0$ define $u_\theta \in H^1_0(\Sigma)$ by $u_\theta(x,y) = \psi(\theta x)\varphi_1(y)$. By elementary computations,
\[
\frac{\int_\Sigma |\nabla u_\theta|^2\dx\dy -\lambda_\Sigma \int_\Sigma |u_\theta|^2\dx\dy}{ \int_\Sigma |u_\theta|^p\dx\dy}
= \theta^2\frac{\int_{\R^k} |\nabla \psi|^2\dx \int_\DD |\varphi_1|^2\dy}{\int_{\R^k} | \psi|^p\dx \int_\DD |\varphi_1|^p\dy} = C\theta^2.
\]
Therefore
\[
0\leq\JJ^\infty(-\lambda_\Sigma)\leq\lim_{\theta\to 0} J_{-\lambda_\Sigma}^\infty(\sigma_{-\lambda_\Sigma}(u_\theta)u_\theta) = C\kappa\lim_{\theta\to 0}\theta^{\frac{2p}{p-2}} \|u_\theta \|_p^p = C\lim_{\theta\to 0}\theta^{\frac{2p}{p-2}-k} =0
\]
since the exponent of $\theta$ is positive for every $p < 2^*$.
\end{proof}

Concerning existence of action ground states on $\Sigma$, we have the following result.
\begin{theorem}
\label{cyl} For every $\lambda > -\lambda_1(\DD)$ there exists an action ground state for $J_\lambda^\infty$ in $\NN_\lambda^\infty$, namely there exists $u\in \NN_\lambda^\infty$ such that $J_\lambda^\infty(u) = \JJ^\infty(\lambda)$.
\end{theorem}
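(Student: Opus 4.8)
The plan is to reduce the Nehari minimization to the constrained minimization of a coercive quadratic form, and then to recover the compactness lost along the unbounded directions of $\Sigma$ by a concentration argument. Exactly as in the derivation of \eqref{J h}, for every $v\in H_0^1(\Sigma)$ with $\|v\|_{L^p(\Sigma)}=1$ the rescaled function $\sigma_\lambda(v)v$ lies in $\NL^\infty$ and $J_\lambda^\infty(\sigma_\lambda(v)v)=\kappa\,\Phi(v)^{p/(p-2)}$, where I set $\Phi(v):=\|\nabla v\|_{L^2(\Sigma)}^2+\lambda\|v\|_{L^2(\Sigma)}^2$. Hence $\JJ^\infty(\lambda)=\kappa\,S^{p/(p-2)}$ with $S:=\inf\{\Phi(v):v\in H_0^1(\Sigma),\ \|v\|_{L^p(\Sigma)}=1\}$, and a minimizer for $S$ yields at once an action ground state $\sigma_\lambda(v)v\in\NL^\infty$. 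Since $\lambda>-\lambda_\Sigma$, Lemma \ref{bottom} gives $\|\nabla v\|_2^2\ge\lambda_\Sigma\|v\|_2^2$, so choosing $\theta\in(0,1)$ with $(1-\theta)\lambda_\Sigma+\lambda>0$ one finds $\Phi(v)\ge\theta\|\nabla v\|_2^2+[(1-\theta)\lambda_\Sigma+\lambda]\,\|v\|_2^2\ge c\|v\|_{H^1(\Sigma)}^2$. Thus $\Phi$ is an equivalent norm on $H_0^1(\Sigma)$ and, by the Sobolev inequality, $S>0$.

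Next I would pick a minimizing sequence $(v_n)$ for $S$, bounded in $H_0^1(\Sigma)$ by coercivity. The key difficulty is that $H_0^1(\Sigma)\hookrightarrow L^p(\Sigma)$ is not compact, since $L^p$ mass may escape to infinity along the $\R^k$ factor; crucially, translations in the $\R^k$ variable preserve $\Sigma$ and its lateral boundary, hence act on $H_0^1(\Sigma)$ leaving both $\Phi$ and $\|\cdot\|_p$ invariant, while the transverse section $\DD$ is bounded. First I would exclude vanishing by a Lions-type lemma on the cylinder (covering $\R^k$ by unit cubes and invoking the compact embedding on each bounded slice over such a cube): if $\sup_{y\in\R^k}\int_{\DD\times B_R(y)}|v_n|^2\to0$ for some $R>0$, then $\|v_n\|_p\to0$, against $\|v_n\|_p=1$. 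Hence there are $R,\delta>0$ and translations along $\R^k$ bringing a fixed amount of $L^2$ mass over $\DD\times B_R(0)$; calling $\tilde v_n$ the translated functions, up to a subsequence $\tilde v_n\rightharpoonup v$ in $H_0^1(\Sigma)$ and, by compactness on the bounded set $\DD\times B_R(0)$, $\int_{\DD\times B_R(0)}|v|^2\ge\delta$, so $v\neq0$.

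Finally I would promote this nonzero weak limit to strong $L^p$ convergence. Writing $w_n:=\tilde v_n-v\rightharpoonup0$, the quadratic form splits as $\Phi(\tilde v_n)=\Phi(v)+\Phi(w_n)+o(1)$ (the bilinear cross term vanishes by weak convergence in $H^1$), while the Brezis--Lieb lemma gives $1=\|\tilde v_n\|_p^p=\|v\|_p^p+\|w_n\|_p^p+o(1)$. Setting $a:=\|v\|_p^p\in(0,1]$ and using $\Phi(\varphi)\ge S\|\varphi\|_p^2$ for both $\varphi=v$ and $\varphi=w_n$, in the limit
\[
S=\Phi(v)+\lim_n\Phi(w_n)\ge S\,a^{2/p}+S\,(1-a)^{2/p}.
\]
Since $0<2/p<1$, strict subadditivity of $t\mapsto t^{2/p}$ gives $a^{2/p}+(1-a)^{2/p}>1$ whenever $0<a<1$, which would force $S>S$; therefore $a=1$, i.e. $\|v\|_p=1$ and $\|w_n\|_p\to0$. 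Weak lower semicontinuity yields $\Phi(v)\le\liminf_n\Phi(\tilde v_n)=S$, while $\Phi(v)\ge S$ by admissibility, so $\Phi(v)=S$ and $v$ minimizes $S$; then $u:=\sigma_\lambda(v)v\in\NL^\infty$ is the sought action ground state. I expect the main obstacle to be exactly this loss of compactness: the argument rests on exploiting translation invariance in the $\R^k$ directions only to extract a nonzero weak limit, and on the strict subadditivity of the level (equivalently, on ruling out dichotomy) to prevent the minimizing sequence from breaking into two bumps drifting apart along the cylinder.
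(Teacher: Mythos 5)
Your argument is correct and is exactly the route the paper has in mind: the paper in fact omits the proof of this theorem, remarking only that compactness is recovered via translation invariance along the $\R^k$ directions as in the $\R^N$ case (citing Cazenave and Esteban), which is precisely the concentration--compactness scheme you carry out in detail. Your reduction to minimizing the coercive quadratic form on the $L^p$ unit sphere, the exclusion of vanishing by a Lions-type lemma on the cylinder, and the exclusion of dichotomy via Brezis--Lieb and the strict superadditivity of $t\mapsto t^{2/p}$ are all sound and fill in the details the paper leaves to the references.
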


We do not provide a proof of this result since it can be carried out with the same arguments that work to prove the existence of action ground states in $\R^N$: compactness of minimizing sequences is recovered due to the invariance of the problem under translations along the directions in $\R^k$ (see e.g. \cite{cazenave,Est83} for details). 

We now focus on the domain $\Omega$ defined in \eqref{omega}. Also in this case, we first show that $\JJ$ is continuous on the whole real line. 

\begin{lemma}
The action ground state level on $\Omega$ satisfies $\JJ(-\lambda_\Omega)=0$. 
\end{lemma}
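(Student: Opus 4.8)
The plan is to exploit the fact that $\Omega=\widehat\Omega\cup\Sigma$ coincides with the cylinder $\Sigma$ outside a compact set, and to split the argument according to the behaviour of the bottom of the spectrum. Since $\JJ(-\lambda_\Omega)\ge 0$ always and, by Remark \ref{rem:lomega}, it suffices to prove the reverse inequality, the goal is to exhibit test functions $w\in H_0^1(\Omega)$ for which $\sigma_{-\lambda_\Omega}(w)\,w\in\NN_{-\lambda_\Omega}$ (recall \eqref{sigma}) has action tending to $0$. By \eqref{J tilde} this amounts to finding $w$ with $\|w\|_p$ bounded away from $0$ and $h_w(-\lambda_\Omega):=\|\nabla w\|_2^2-\lambda_\Omega\|w\|_2^2\to 0$. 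First I would record that $H_0^1(\Sigma)\subset H_0^1(\Omega)$ (extending by zero) forces $\lambda_\Omega\le\lambda_\Sigma=\lambda_1(\DD)$ by Lemma \ref{bottom}, and then distinguish whether this inequality is an equality.

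If $\lambda_\Omega=\lambda_\Sigma$, the cylinder is already optimal and the functions $u_\theta(x,y)=\psi(\theta x)\varphi_1(y)$ constructed in the proof of Lemma \ref{lsigma} work verbatim: they lie in $H_0^1(\Sigma)\subset H_0^1(\Omega)$, and the very computation performed there gives $h_{u_\theta}(-\lambda_\Omega)/\|u_\theta\|_p^p=C\theta^2\to 0$, so that $J_{-\lambda_\Omega}(\sigma_{-\lambda_\Omega}(u_\theta)u_\theta)\to 0$ and hence $\JJ(-\lambda_\Omega)=0$.

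If instead $\lambda_\Omega<\lambda_\Sigma$, I would first show that $\lambda_\Omega$ is \emph{attained}. The key point is that $\lambda_\Sigma$ is the bottom of the essential spectrum of $-\Delta$ on $\Omega$: since $\widehat\Omega$ is bounded, $\Omega$ and $\Sigma$ differ only inside a large ball $B_R$, so any minimizing sequence $w_n$ for the Rayleigh quotient (normalized by $\|w_n\|_2=1$, with $\|\nabla w_n\|_2^2\to\lambda_\Omega$) that loses all its mass (i.e. $w_n\rightharpoonup 0$) must, by local compactness, concentrate in $\Omega\setminus B_R=\Sigma\setminus B_R$; there a standard cut-off argument together with the Poincar\'e inequality $\|\nabla v\|_2^2\ge\lambda_\Sigma\|v\|_2^2$ on $\Sigma$ yields $\liminf_n\|\nabla w_n\|_2^2\ge\lambda_\Sigma>\lambda_\Omega$, a contradiction. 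Hence $w_n\rightharpoonup\varphi_1\ne 0$ and, by weak lower semicontinuity, $\varphi_1$ realizes $\lambda_\Omega$ and solves $-\Delta\varphi_1=\lambda_\Omega\varphi_1$. With such an eigenfunction at hand I would conclude as in Step 1 of the proof of Lemma \ref{lem:J}: pick any $\varphi_2\in H_0^1(\Omega)$ with $\int_\Omega\varphi_1\varphi_2\dx=0$ and $h_{\varphi_2}(-\lambda_\Omega)>0$ (such $\varphi_2$ exists, as the ground eigenspace is finite--dimensional). Since $\varphi_1$ is an eigenfunction one also has $\int_\Omega\nabla\varphi_1\cdot\nabla\varphi_2\dx=\lambda_\Omega\int_\Omega\varphi_1\varphi_2\dx=0$, whence $h_{\varphi_1+\eps\varphi_2}(-\lambda_\Omega)=\eps^2 h_{\varphi_2}(-\lambda_\Omega)=O(\eps^2)$, while $\|\varphi_1+\eps\varphi_2\|_p^p\to\|\varphi_1\|_p^p>0$. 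Letting $\eps\to 0$ gives $J_{-\lambda_\Omega}\big(\sigma_{-\lambda_\Omega}(\varphi_1+\eps\varphi_2)(\varphi_1+\eps\varphi_2)\big)\to 0$, and again $\JJ(-\lambda_\Omega)=0$.

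The main obstacle is precisely the attainment step when $\lambda_\Omega<\lambda_\Sigma$, namely identifying $\lambda_\Sigma$ as the bottom of the essential spectrum and ruling out vanishing of minimizing sequences (a Persson-type/concentration argument, standard because $\Omega$ agrees with $\Sigma$ outside a compact set). Once this compactness is in place, everything else is a direct transcription of the constructions already used in Lemmas \ref{lem:J} and \ref{lsigma}.
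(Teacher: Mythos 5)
Your overall strategy coincides with the paper's: compare $\lambda_\Omega$ with $\lambda_\Sigma=\lambda_1(\DD)$, dispose of the case $\lambda_\Omega=\lambda_\Sigma$ by transplanting the test functions of Lemma \ref{lsigma}, and in the case $\lambda_\Omega<\lambda_\Sigma$ produce an eigenfunction attaining $\lambda_\Omega$ and then conclude via the perturbation $\varphi_1+\eps\varphi_2$ as in Remark \ref{rem:lomega} (Step 1 of the proof of Lemma \ref{lem:J}). That final perturbation step is correct as you wrote it.

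There is, however, a genuine gap in the attainment step. You rule out only \emph{total} vanishing of the normalized minimizing sequence ($w_n\rightharpoonup 0$) and then assert that ``by weak lower semicontinuity, $\varphi_1$ realizes $\lambda_\Omega$''. This does not follow: if $w_n\rightharpoonup\varphi_1\neq 0$ with $m:=\|\varphi_1\|_2^2\in(0,1)$, weak lower semicontinuity only gives $\|\nabla\varphi_1\|_2^2\le\lambda_\Omega$, whereas the relevant quantity is the Rayleigh quotient $\|\nabla\varphi_1\|_2^2/m$, which may exceed $\lambda_\Omega$ when $m<1$ (the $L^2$ norm in the denominator is also only weakly lower semicontinuous, i.e.\ in the wrong direction for a quotient). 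You must exclude this dichotomy case $0<m<1$. The paper does so by splitting $\lambda_\Omega=\|\nabla(\varphi_n-\varphi_1)\|_2^2+\|\nabla\varphi_1\|_2^2+o(1)$ and $\|\varphi_n-\varphi_1\|_2^2=1-m+o(1)$, noting that $\varphi_n-\varphi_1\to 0$ strongly in $L^2(\widehat\Omega)$ so that, after the same cut-off you invoke for the vanishing case, $\|\nabla(\varphi_n-\varphi_1)\|_2^2\ge\lambda_\Sigma(1-m)+o(1)$, while $\|\nabla\varphi_1\|_2^2\ge\lambda_\Omega m$ by the definition of $\lambda_\Omega$; summing yields $\lambda_\Omega\ge\lambda_\Sigma(1-m)+\lambda_\Omega m+o(1)$, which contradicts $\lambda_\Omega<\lambda_\Sigma$ unless $m=1$. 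The repair uses exactly the cut-off idea you already employ, applied to $\varphi_n-\varphi_1$ instead of $\varphi_n$, so the gap is fixable, but as written the argument is incomplete at precisely the point you identify as ``the main obstacle''.
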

\begin{proof}
	Since $\Sigma\subset\Omega$, then $\lambda_\Omega\leq\lambda_\Sigma$. If $\lambda_\Omega=\lambda_\Sigma$, the lemma follows repeating verbatim the argument in the proof of Lemma \ref{lsigma}.
	
	If on the contrary $\lambda_\Omega<\lambda_\Sigma$, then we claim that there exists an eigenfunction $\varphi\in H_0^1(\Omega)$ associated to $\lambda_\Omega$. Note that, given this, the statement of the lemma follows by Remark \ref{rem:lomega}. Let us thus prove the claim. Let $(\varphi_n)_n\subset H_0^1(\Omega)$ satisfy $\|\varphi_n\|_2=1$ for every $n\in\N$ and $\|\nabla\varphi_n\|_2^2\to\lambda_\Omega$ as $n\to+\infty$. Then (up to subsequences) there exists $\varphi\in H_0^1(\Omega)$ such that $\varphi_n\rightharpoonup \varphi$ in $H^1(\Omega)$. Let $m:=\|\varphi\|_2^2$. By weak lower semicontinuity, $m\in[0,1]$. If $m=1$, then $\varphi$ is the eigenfunction we seek. We are thus left to rule out the case $m<1$.
	
Note first that $m\neq0$. Indeed, if this is not the case, then in particular  $\|\varphi_n\|_{L^2(\widehat{\Omega})}\to0$ as $n\to+\infty$. By a standard cut--off procedure, one can then  construct $u_n\in H_0^1(\Omega)$ such that $u_n\equiv0$ on $\widehat{\Omega}$, $\|u_n\|_2^2=1+o(1)$ and $\|\nabla u_n\|_2^2\leq\|\nabla \varphi_n\|_2^2+o(1)=\lambda_\Omega+o(1)$ as $n \to \infty$. But this is impossible, since as $u_n$ is supported in $\Sigma$,
\[
\lambda_\Sigma \le \frac{\|\nabla u_n\|_2^2}{\|u_n\|_2^2} \le \frac{\|\nabla \varphi_n\|_2^2+o(1)}{1+o(1)} = \lambda_\Omega +o(1),
\]
contradicting $\lambda_\Omega < \lambda_\Sigma$.

Hence $m\in(0,1)$ and $\varphi\not\equiv 0$ on $\Omega$. By weak convergence, we have $\|\varphi_n-\varphi\|_2^2= 1-\|\varphi\|_2^2+o(1)=1-m+o(1)$. Furthermore, $\varphi_n-\varphi\to0$ strongly in $L^2(\widehat{\Omega})$, so that just as before $\|\nabla(\varphi_n-\varphi)\|_2^2\geq\lambda_\Sigma\|\varphi_n-\varphi\|_2^2+o(1)=\lambda_\Sigma(1-m)+o(1)$ as $n \to \infty$. Hence
\[
\lambda_\Omega =\|\nabla\varphi_n\|_2^2 +o(1)=\|\nabla(\varphi_n-\varphi)\|_2^2+\|\nabla\varphi\|_2^2+o(1)\geq\lambda_\Sigma(1-m)+\lambda_\Omega m+o(1),
\]
contradicting again $\lambda_\Omega < \lambda_\Sigma$. The proof is complete.
\end{proof}

We now show that Assumption \ref{C} is fulfilled, and to this end we first need  to address existence of action ground states.
We have chosen to work with $\Omega$ since it is a first example where one cannot use directly the invariance under translations to restore compactness. It is clear that the general reason behind existence proofs resides in concentration--compactness arguments, of which the results below are no more than an adaptation to our specific setting. Since however the proofs are rather short, we carry them out for completeness. Furthermore, as we are specifically interested in the validity of Assumption \ref{C},  we limit ourselves to deal with action ground states. The same type of argument can be naturally adapted to prove existence of energy ground states, thus showing the validity of Assumption \ref{assE}.

The following preliminary lemmas characterize the behaviour of action minimizing sequences.
\begin{lemma}
\label{weakzero}
Let $(u_n)_n \subset \NL$ be a sequence such that $u_n \rightharpoonup 0$ in $H_0^1(\Omega)$ as $n\to \infty$. Then
\[
\liminf_n J_\lambda(u_n) \ge \JJ^\infty(\lambda).
\]
\end{lemma}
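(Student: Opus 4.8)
The plan is to exploit the fact that the only way a sequence in $\NL$ can converge weakly to $0$ is by letting its $L^p$-mass slide off to infinity along the cylindrical end of $\Omega$, where $\Omega$ coincides with $\Sigma$. Since $\widehat\Omega$ is bounded, on every bounded subset of $\Omega$ the Rellich--Kondrachov theorem forces $u_n\to 0$ strongly in $L^2$ and in $L^p$; hence, up to a vanishing error, $u_n$ is essentially supported far out in $\Sigma$, and there its action cannot drop below $\JJ^\infty(\lambda)$. First I would dispose of trivial cases: if $\liminf_n J_\lambda(u_n)=+\infty$ there is nothing to prove, so I pass to a subsequence along which $J_\lambda(u_n)=\kappa\|u_n\|_p^p$ converges to a finite limit; moreover $u_n\rightharpoonup 0$ already forces $(u_n)_n$ to be bounded in $H_0^1(\Omega)$. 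I may also assume $\lambda>-\lambda_\Omega$, since when $\lambda\le-\lambda_\Sigma$ one has $\JJ^\infty(\lambda)=0$ by Lemma \ref{lsigma} and the inequality is trivial because $J_\lambda\ge 0$ on $\NL$ by \eqref{J tilde}. As $\lambda_\Omega\le\lambda_\Sigma$, the relation $\lambda>-\lambda_\Omega$ yields $\lambda>-\lambda_\Sigma$, which makes the projection onto $\NL^\infty$ used below legitimate.

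Next I fix a ball $B_R\supset\widehat\Omega$ and a cut-off $\chi_R\in C^\infty(\R^N)$ with $\chi_R\equiv0$ on $B_R$, $\chi_R\equiv1$ outside $B_{2R}$, $0\le\chi_R\le1$ and $|\nabla\chi_R|\le C/R$. Since $\chi_R$ vanishes on $\widehat\Omega$ and $\Omega\setminus B_R=\Sigma\setminus B_R$, the function $v_n:=\chi_R u_n$ belongs to $H_0^1(\Sigma)$. The key estimates are that, for this fixed $R$ and as $n\to\infty$,
\[
\|v_n\|_p^p=\|u_n\|_p^p+o(1),\qquad \|v_n\|_2^2=\|u_n\|_2^2+o(1),\qquad \|\nabla v_n\|_2^2\le\|\nabla u_n\|_2^2+o(1).
\]
All three follow from $u_n\to0$ strongly in $L^2(\Omega\cap B_{2R})$ and $L^p(\Omega\cap B_{2R})$ together with $|\nabla\chi_R|\le C/R$: the $L^p$ and $L^2$ identities because $1-\chi_R^p$ and $1-\chi_R^2$ are supported in $B_{2R}$, and the gradient bound because $\nabla v_n=\chi_R\nabla u_n+u_n\nabla\chi_R$, whose cross term and last term are controlled by $\|u_n\|_{L^2(B_{2R})}\|\nabla u_n\|_2$ and $(C/R)^2\|u_n\|_{L^2(B_{2R})}^2$, both $o(1)$ thanks to the $H^1$-bound. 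Using the Nehari identity $\|\nabla u_n\|_2^2+\lambda\|u_n\|_2^2=\|u_n\|_p^p$, these combine into $\|\nabla v_n\|_2^2+\lambda\|v_n\|_2^2\le\|u_n\|_p^p+o(1)$.

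Finally I project $v_n$ onto $\NL^\infty$. Since $\lambda>-\lambda_\Sigma$ and $v_n\ne0$ for $n$ large (because $\|v_n\|_p^p=\|u_n\|_p^p+o(1)$ while $\|u_n\|_p^p\ge\JJ(\lambda)/\kappa>0$ for $\lambda>-\lambda_\Omega$), the numerator above is positive, so $\sigma_\lambda(v_n)v_n\in\NL^\infty$ and, recalling \eqref{sigma} and \eqref{J tilde},
\[
\JJ^\infty(\lambda)\le J_\lambda^\infty(\sigma_\lambda(v_n)v_n)=\kappa\left(\frac{\|\nabla v_n\|_2^2+\lambda\|v_n\|_2^2}{\|v_n\|_p^2}\right)^{\frac{p}{p-2}}\le\kappa\bigl(\|u_n\|_p^p+o(1)\bigr)=J_\lambda(u_n)+o(1),
\]
where the middle inequality uses $\|v_n\|_p^2=(\|u_n\|_p^p+o(1))^{2/p}$ to simplify the ratio. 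Passing to the $\liminf$ in $n$ for this \emph{fixed} $R$ (so that no second limit in $R$ is needed) yields $\JJ^\infty(\lambda)\le\liminf_n J_\lambda(u_n)$, which is the claim.

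I expect the main obstacle to be the bookkeeping of the cut-off error terms, especially the gradient cross term, and the verification that the Nehari rescaling on $\Sigma$ is well defined, i.e. positivity of the numerator and $v_n\ne0$. The conceptual heart, however, is simply that weak convergence to $0$ together with the boundedness of $\widehat\Omega$ localizes all of the $L^p$-mass to the region where $\Omega$ agrees with $\Sigma$; once that is exploited, the comparison with $\JJ^\infty(\lambda)$ is forced.
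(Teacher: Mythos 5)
Your argument is correct and follows essentially the same route as the paper's proof: cut off $u_n$ near the bounded piece $\widehat\Omega$ (where Rellich--Kondrachov kills the $L^2$- and $L^p$-mass), view the truncation $v_n$ as an element of $H_0^1(\Sigma)$ with the same $L^p$-norm and no larger gradient norm up to $o(1)$, and rescale it onto $\NL^\infty$ via $\sigma_\lambda(v_n)\le 1+o(1)$ to compare with $\JJ^\infty(\lambda)$. The only differences are cosmetic (a smooth cut-off with $|\nabla\chi_R|\le C/R$ instead of the paper's Lipschitz radial cut-off, and your explicit reduction to $\lambda>-\lambda_\Omega$, which the paper leaves implicit when it uses that $\|u_n\|_p$ stays away from zero).
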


\begin{proof} Since each $u_n$ is in $H^1_0(\Omega)$, we can view it, whenever necessary, as an element of $H^1_0(\R^N)$, without repeating it every time. Moreover, recall that the sequence $u_n$, being in $\NL$, cannot tend to $0$ strongly in $L^p(\Omega)$.
	
Let $R>0$ be so that $\widehat \Omega \subset B_R$ and define $\phi : [0,+\infty) \to [0,1]$ as
\[
\phi(t) = \begin{cases} 0 & \text{ if } t\in [0,R] \\ 
t-R & \text{ if } t\in [R,R+1] \\
1 & \text{ if } t \ge R+1\,. \end{cases}
\]
Note that $|\phi'(t)| \le 1$ for every $t$.  Since $u_n \rightharpoonup 0$ in $H_0^1(\Omega)$, we also have $u_n \to 0$ in $L^q_{loc}(\Omega)$ for every $q \in [2, 2^*)$.

Therefore, defining  $v_n \in H_0^1(\Sigma)$ as (the restriction to $\Sigma$ of) $\phi(|x|)u_n(x)$, it is straightforward to check that as $n\to \infty$,
\begin{align*}
&\|v_n\|_{L^q(\Sigma)}^q = \|u_n\|_{L^q(\Omega)}^q + o(1)\,, \qquad q \in [2, 2^*),  \\
& \|\nabla v_n\|_{L^2(\Sigma)}^2 \le \|\nabla u_n\|_{L^2(\Omega)}^2  + o(1)\,.
\end{align*}
Then we see that 
\[
\|\nabla v_n\|_{L^2(\Sigma)}^2 + \lambda\|v_n\|_{L^2(\Sigma)}^2 \le \|\nabla u_n\|_{L^2(\Omega)}^2 + \lambda\|u_n\|_{L^2(\Omega)}^2 +o(1)
= \|u_n\|_{L^p(\Omega)}^p +o(1) = \|v_n\|_{L^p(\Sigma)}^p +o(1),
\]
or, as $\|v_n\|_{L^p(\Sigma)}^p$ is bounded away from zero,
\[
\sigma_\lambda(v_n)\le 1+o(1)\,.
\]
Notice now that, as $\sigma_\lambda(v_n) v_n \in \NL^\infty$,
\[
\JJ^\infty(\lambda) \le J_\lambda^\infty(\sigma_\lambda(v_n) v_n) = \kappa \sigma_\lambda(v_n)^p \|v_n\|_{L^p(\Sigma)}^p \le \kappa\|u_n\|_{L^p(\Omega)}^p +o(1) = J_\lambda(u_n) + o(1),
\]
and letting $n\to \infty$ the conclusion follows.
\end{proof}

\begin{lemma}
\label{weakstrong}
Assume that $\JJ(\lambda) < \JJ^\infty(\lambda)$.
Let $(u_n)_n \subset \NL$ be a minimizing sequence for $J_\lambda$  such that $u_n \rightharpoonup u$ in $H_0^1(\Omega)$ and a.e. in $\Omega$ as $n\to \infty$.  If $u \not\equiv 0$, then
\[
u_n \to u \quad\text{in}\quad L^2(\Omega)\qquad \text{as}\quad n \to \infty.
\]
\end{lemma}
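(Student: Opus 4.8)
The plan is to run a concentration--compactness argument in the spirit of Lemma \ref{weakzero}, setting $w_n := u_n - u$, so that $w_n \rightharpoonup 0$ in $H_0^1(\Omega)$. Writing $Q_\lambda(v) := \|\nabla v\|_2^2 + \lambda\|v\|_2^2$ and $G(v) := Q_\lambda(v) - \|v\|_p^p$ (so that $v \in \NL$ is equivalent to $G(v)=0$), I would first record the splittings coming from weak convergence and the Brezis--Lieb lemma,
\[
Q_\lambda(u_n) = Q_\lambda(u) + Q_\lambda(w_n) + o(1),\qquad \|u_n\|_p^p = \|u\|_p^p + \|w_n\|_p^p + o(1),
\]
whose difference, since $u_n\in\NL$ gives $G(u_n)=0$, yields the fundamental balance $G(u) + G(w_n) = o(1)$. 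Note that necessarily $\lambda > -\lambda_\Omega$: otherwise $\JJ(\lambda)=0$ would force $\|u_n\|_p\to 0$ and hence $u\equiv 0$ by the a.e.\ convergence, against the hypothesis. Thus $Q_\lambda$ is equivalent to the $H_0^1$ norm, $(u_n)_n$ is bounded, and after passing to a subsequence I may assume $\|w_n\|_p^p\to\ell\ge 0$, $Q_\lambda(w_n)\to q$ and $\sigma_\lambda(w_n)\to t_\infty$, all finite.

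The heart of the matter is to prove $\ell=0$, and this is exactly where the hypothesis $\JJ(\lambda)<\JJ^\infty(\lambda)$ is used. Arguing by contradiction, suppose $\ell>0$. From $J_\lambda(u_n)=\kappa\|u_n\|_p^p\to\JJ(\lambda)$ and the Brezis--Lieb splitting one gets the level identity $\JJ(\lambda)=\kappa(\|u\|_p^p+\ell)$. I would then project both bubbles onto their Nehari manifolds. Since $\sigma_\lambda(u)u\in\NL$, we have $\kappa\,\sigma_\lambda(u)^p\|u\|_p^p\ge\JJ(\lambda)=\kappa(\|u\|_p^p+\ell)>\kappa\|u\|_p^p$, which forces $\sigma_\lambda(u)>1$ and hence $G(u)>0$. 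On the other hand $\sigma_\lambda(w_n)w_n\in\NL$ and $\sigma_\lambda(w_n)w_n\rightharpoonup 0$, so Lemma \ref{weakzero} gives
\[
\kappa\,t_\infty^p\,\ell=\lim_n J_\lambda\!\left(\sigma_\lambda(w_n)w_n\right)\ge\JJ^\infty(\lambda)>\JJ(\lambda)=\kappa(\|u\|_p^p+\ell)>\kappa\ell,
\]
which forces $t_\infty>1$ and hence $\lim_n G(w_n)=\ell\,(t_\infty^{p-2}-1)>0$. But then $G(u)+\lim_n G(w_n)>0$, contradicting the balance $G(u)+G(w_n)=o(1)$. (The degenerate case $q=0$ is ruled out in advance, since $Q_\lambda(w_n)\to 0$ would imply $\|w_n\|_p\to 0$ through \eqref{GN}, against $\ell>0$.) Therefore $\ell=0$.

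Once $\|w_n\|_p\to 0$, the conclusion follows quickly. The balance becomes $G(u)+Q_\lambda(w_n)=o(1)$ with $Q_\lambda(w_n)\ge 0$, so $G(u)\le 0$; while the identity $\JJ(\lambda)=\kappa\|u\|_p^p$ together with $\sigma_\lambda(u)u\in\NL$ gives $\sigma_\lambda(u)\ge 1$, i.e.\ $G(u)\ge 0$. Hence $G(u)=0$, so $u\in\NL$ is itself an action ground state and $Q_\lambda(w_n)\to 0$. Since $Q_\lambda(w_n)\ge(\lambda+\lambda_\Omega)\|w_n\|_2^2$ with $\lambda+\lambda_\Omega>0$, we conclude $w_n\to 0$ in $H_0^1(\Omega)$, in particular $u_n\to u$ in $L^2(\Omega)$, as claimed.

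The main obstacle, and the only place that genuinely requires the strict inequality $\JJ(\lambda)<\JJ^\infty(\lambda)$, is the control of the escaping part $w_n$: because $w_n$ does not lie on $\NL$, Lemma \ref{weakzero} cannot be applied to it directly, so one must first rescale it by $\sigma_\lambda(w_n)$, and then organize the two projection inequalities so that \emph{both} the surviving bubble $u$ and the rescaled escaping bubble are forced to satisfy $G>0$, which is incompatible with the exact balance $G(u)+G(w_n)=o(1)$. Arranging the level comparisons so that these two inequalities push in the same contradictory direction is the delicate bookkeeping step.
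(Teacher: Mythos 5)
Your proof is correct, and while it is built from the same two ingredients as the paper's argument (the Brezis--Lieb splitting of the Nehari identity and Lemma \ref{weakzero} applied to the rescaled escaping part $\sigma_\lambda(u_n-u)(u_n-u)$), it closes the contradiction differently. The paper assumes $\liminf_n\|u_n-u\|_2^2>0$, shows $\sigma_\lambda(u)\ge 1$ by weak lower semicontinuity, uses the splitting of the quotient defining $\lambda$ to deduce $\sigma_\lambda(u_n-u)\le 1+o(1)$, and then concatenates the level estimates to reach $\JJ(\lambda)\ge\JJ^\infty(\lambda)$, i.e.\ the contradiction lives at the level of action values. You instead contradict $\lim_n\|u_n-u\|_p^p=\ell>0$ by showing that \emph{both} bubbles would have to sit strictly on the ``sublinear side'' of the Nehari manifold ($G(u)>0$ and $\lim_n G(u_n-u)>0$, where $G(v)=\|\nabla v\|_2^2+\lambda\|v\|_2^2-\|v\|_p^p$), which is incompatible with the exact additivity $G(u)+G(u_n-u)=o(1)$ inherited from $G(u_n)=0$; the strict inequality $\JJ(\lambda)<\JJ^\infty(\lambda)$ enters only to force $t_\infty>1$ for the escaping bubble. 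Your route buys a bit more in the last paragraph: you obtain $G(u)=0$, hence that $u$ is itself an action ground state, and full $H^1_0$ convergence of $u_n$ to $u$ (via $Q_\lambda(u_n-u)\to 0$ and $\lambda>-\lambda_\Omega$), facts the paper only recovers later inside the proof of Theorem \ref{exaction}. The one place where you lean on something not in the lemma's statement is the claim that necessarily $\lambda>-\lambda_\Omega$: for $\lambda=-\lambda_\Omega$ this uses $\JJ(-\lambda_\Omega)=0$, which holds for the domain of Section \ref{subsec:unbound} by the lemma preceding this one (and is in any case the only regime in which the result is applied), so this is harmless but worth flagging explicitly.
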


\begin{proof} Assume by contradiction that 
\begin{equation}
\label{contr}
\liminf_n \|u_n - u\|_2^2 >0.
\end{equation}
Since $u_n\rightharpoonup u$ in $L^p(\Omega)$, we first see that
\[
\kappa \|u\|_p^p \le \kappa\liminf_n  \|u_n\|_p^p = \lim_n J_\lambda(u_n) = \JJ(\lambda) \le J_\lambda(\sigma_\lambda(u) u) = \kappa\sigma_\lambda(u)^p \|u\|_p^p,
\]
which shows (as $u\not \equiv 0$) that $\sigma_\lambda(u) \ge 1$.

Now by the Brezis--Lieb Lemma \cite{BL} we can write, as $n \to \infty$,
\begin{align*}
\lambda &= \frac{\|u_n\|_p^p -\|\nabla u_n\|_2^2}{\|u_n\|_2^2} \\
&= \frac{\|u_n-u\|_p^p -\|\nabla u_n-\nabla u\|_2^2 + \|u\|_p^p -\|\nabla u\|_2^2 +o(1)}{\|u_n-u\|_2^2 + \|u\|_2^2 + o(1)} =: \frac{a_n+b+o(1)}{c_n + d+o(1)}.
\end{align*}
Notice that $d \ne 0$ and that $b/d \le \lambda$, since $\sigma_\lambda(u) \ge 1$.
Therefore
\[
\lambda c_n + \lambda d +o(1) = a_n + b + o(1) \le a_n + \lambda d +o(1),
\]
namely $\lambda c_n \le a_n +o(1)$, which reads
\[
\sigma_\lambda(u_n-u) \le 1 +o(1)
\]
as $n \to \infty$ (notice that $\|u_n - u\|_p^p$ cannot tend to zero by $\lambda c_n \le a_n +o(1)$ combined with \eqref{contr}).

We now define $v_n = \sigma(u_n-u) (u_n-u) \in \NL$ for every $n$ and we notice that $v_n \rightharpoonup 0$ in $H_0^1(\Omega)$. By Lemma \ref{weakzero},
\[
\liminf_n J_\lambda(v_n) \ge \JJ^\infty(\lambda),
\]
so that (using $\sigma(u_n-u)  \le 1 +o(1)$) 
\begin{align*}
\JJ(\lambda) &= \kappa\lim_n  \|u_n\|_p^p = \kappa \lim_n \left(\|u_n-u\|_p^p + \|u\|_p^p\right) \\
& \ge \kappa \lim_n \sigma(u_n-u) ^p \|u_n-u\|_p^p + \kappa\|u\|_p^p \ge  \kappa \lim_n \|v_n\|_p^p = \lim_n J_\lambda(v_n) \ge \JJ^\infty(\lambda),
\end{align*}
which contradicts the assumption and concludes the proof.
\end{proof}
The next theorem provides existence for action ground states on $\Omega$.
\begin{theorem}
\label{exaction}
For every $\lambda \in (-\lambda_\Omega, +\infty)$  there exists an action ground state in $\NL$. 
\end{theorem}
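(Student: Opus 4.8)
The plan is to exploit the elementary comparison $\JJ(\lambda)\le \JJ^\infty(\lambda)$, which holds because $H_0^1(\Sigma)\subset H_0^1(\Omega)$ forces $\NL^\infty\subset\NL$, so that the infimum over the larger Nehari manifold cannot be larger. I would then split into two exhaustive cases according to whether this inequality is strict or an equality, and show that a minimizer exists in either case, without ever having to decide which alternative actually occurs.

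The equality case $\JJ(\lambda)=\JJ^\infty(\lambda)$ is immediate: by Theorem \ref{cyl} there is an action ground state $w\in\NL^\infty$ on the cylinder, and since $w$ is supported in $\Sigma\subset\Omega$ it may be read as an element of $\NL$ with $J_\lambda(w)=J_\lambda^\infty(w)=\JJ^\infty(\lambda)=\JJ(\lambda)$. Thus $w$ itself is already an action ground state on $\Omega$.

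The substantive case is the strict inequality $\JJ(\lambda)<\JJ^\infty(\lambda)$. Here I would take a minimizing sequence $(u_n)_n\subset\NL$ for $J_\lambda$. Because $J_\lambda(u_n)=\kappa\|u_n\|_p^p$ is bounded and, as $\lambda>-\lambda_\Omega$, the Nehari identity together with the Poincar\'e inequality controls $\|\nabla u_n\|_2$, the sequence is bounded in $H_0^1(\Omega)$; up to a subsequence $u_n\rightharpoonup u$ in $H_0^1(\Omega)$ and a.e.\ in $\Omega$. If $u\equiv0$, then Lemma \ref{weakzero} would give $\JJ(\lambda)=\lim_n J_\lambda(u_n)\ge\JJ^\infty(\lambda)$, contradicting strictness; hence $u\not\equiv0$. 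Lemma \ref{weakstrong} then yields $u_n\to u$ in $L^2(\Omega)$, and interpolating between this $L^2$ convergence and the uniform $H_0^1$ (hence $L^{2^*}$) bound upgrades it to $u_n\to u$ in $L^p(\Omega)$. Consequently $J_\lambda(u_n)=\kappa\|u_n\|_p^p\to\kappa\|u\|_p^p=\JJ(\lambda)$.

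It remains to check that $u$ lies on the Nehari manifold at the right level, which I regard as the only delicate bookkeeping. On one hand, since $u_n\to u$ in $L^2$ and $L^p$ while $\|\nabla u\|_2^2\le\liminf_n\|\nabla u_n\|_2^2$, passing to the limit in $\|\nabla u_n\|_2^2+\lambda\|u_n\|_2^2=\|u_n\|_p^p$ gives $\|\nabla u\|_2^2+\lambda\|u\|_2^2\le\|u\|_p^p$, i.e.\ $\sigma_\lambda(u)\le1$ (the quantity $\sigma_\lambda(u)$ is well defined and positive because $\lambda>-\lambda_\Omega$ and $u\not\equiv0$). On the other hand $\sigma_\lambda(u)u\in\NL$, so $\kappa\sigma_\lambda(u)^p\|u\|_p^p=J_\lambda(\sigma_\lambda(u)u)\ge\JJ(\lambda)=\kappa\|u\|_p^p$, forcing $\sigma_\lambda(u)\ge1$. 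Hence $\sigma_\lambda(u)=1$, so $u\in\NL$ and $J_\lambda(u)=\kappa\|u\|_p^p=\JJ(\lambda)$, i.e.\ $u$ is an action ground state. The main obstacle is precisely this strict case, and within it the crux is already supplied by Lemmas \ref{weakzero}--\ref{weakstrong}: once vanishing weak limits are excluded and strong $L^2$ compactness is secured, the passage to the limit and the pinching $\sigma_\lambda(u)=1$ become routine.
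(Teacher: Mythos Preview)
Your argument is correct. In the substantive case $\JJ(\lambda)<\JJ^\infty(\lambda)$ you follow exactly the paper's proof: boundedness of the minimizing sequence, exclusion of the zero weak limit via Lemma~\ref{weakzero}, strong $L^2$ (hence $L^p$) compactness via Lemma~\ref{weakstrong}, and the pinching $\sigma_\lambda(u)=1$.

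The only structural difference is how you treat the alternative $\JJ(\lambda)=\JJ^\infty(\lambda)$. You dispose of it by observing that the cylinder ground state (from Theorem~\ref{cyl}, which applies since $\lambda>-\lambda_\Omega\geq-\lambda_\Sigma$) would already realize the infimum on $\NL$. The paper instead \emph{proves} that this equality is impossible: if it held, that very cylinder ground state would be an action ground state on $\Omega$, hence a solution of \eqref{nlseq} on $\Omega$ vanishing identically on the nonempty open set $\widehat\Omega\setminus\Sigma$, contradicting unique continuation. Your dichotomy is logically sufficient for the theorem as stated, and slightly more economical; the paper's route yields the extra information $\JJ(\lambda)<\JJ^\infty(\lambda)$ for all $\lambda>-\lambda_\Omega$, which is reused later (e.g.\ in the proof of Proposition~\ref{Cholds}) to rule out vanishing weak limits.
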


\begin{proof} We first note that $\lambda_\Omega \le \lambda_\Sigma = \lambda_1(\DD)$, simply because $\Sigma \subset \Omega$ and by Lemma \ref{bottom}. Next we observe that for every $\lambda \in (-\lambda_\Omega, +\infty)$, there results
\begin{equation}
\label{ineq}
\JJ(\lambda) < \JJ^\infty(\lambda).
\end{equation}
Indeed, the weak inequality is trivial by the inclusion $\Sigma \subset \Omega$. If we had equality, then an action ground state in $\Sigma$ (provided by Theorem \ref{cyl}, recalling that $\lambda_\Omega \le \lambda_1(\DD)$) would be an action ground state in $\Omega$, which is impossible since it would vanish in $\Omega \setminus \Sigma$.

Let $(u_n)_n \subset \NL$ be a minimizing sequence for $J_\lambda$. As such, $(u_n)_n$ is bounded in $H^1_0(\Omega)$, and we can assume that (up to subsequences)
\[
u_n \rightharpoonup u \text{ in }H^1(\Omega), \quad u_n \to u  \text{ in $L^q_{loc}(\Omega)$ for every $q \in [2,2^*)$ and a.e. in $\Omega$}.
\]
Now, $u$ cannot vanish identically since in this case by Lemma \ref{weakzero}
\[
\JJ(\lambda) = \lim_n J_\lambda(u_n) \ge \JJ^\infty(\lambda),
\]
contradicting \eqref{ineq}. Hence, by Lemma \ref{weakstrong}, $u_n \to u$ strongly in $L^2(\Omega)$. By \eqref{GN} we also have $u_n \to u$ strongly in $L^p(\Omega)$. Hence
\[
\|\nabla u\|_2^2+\lambda \|u\|_2^2 \le \liminf_n \left(\|\nabla u_n\|_2^2 +\lambda \|u_n\|_2^2\right) = \liminf_n \|u_n\|_p^p = \|u\|_p^p,
\]
showing that $\sigma_\lambda(u)\le 1$. Now $\sigma_\lambda(u) <1$ is impossible, since if it were so,
\[
\JJ(\lambda) \le J_\lambda(\sigma_\lambda(u) u) = k \sigma_\lambda(u)^p \|u\|_p^p = k \sigma_\lambda(u)^p \lim_n \|u_n\|_p^p = \sigma_\lambda(u)^p \JJ(\lambda) < \JJ(\lambda),
\]
which is false. Hence it must be $\sigma_\lambda(u) = 1$, that is $u\in \NL$ which, coupled with
\[
J_\lambda(u) = \kappa \|u\|_p^p = \kappa \lim_n \|u_n\|_p^p = \JJ(\lambda),
\]
shows that $u$ is the required ground state.
\end{proof}
\begin{proposition}
\label{Cholds} 
Assumption \ref{C} holds. 
\end{proposition}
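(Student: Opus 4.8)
The plan is to establish the stronger conclusion $\mu\in Q(\lambda)$, by extracting from the family of action ground states realizing the masses $\mu_n$ a strongly convergent subsequence whose limit is an action ground state of frequency $\lambda$ and mass $2\mu$. Concretely, given $\lambda_n\to\lambda\in(-\lambda_\Omega,+\infty)$ and $\mu_n\to\mu$ with $\mu_n\in Q(\lambda_n)$, I would fix for each $n$ an action ground state $u_n\in\NN_{\lambda_n}$ with $\tfrac12\|u_n\|_2^2=\mu_n$, so that $J_{\lambda_n}(u_n)=\kappa\|u_n\|_p^p=\JJ(\lambda_n)$. Since $\JJ$ is continuous on $\R$ (by the identity $\JJ(-\lambda_\Omega)=0$ proved above and Remark \ref{rem:lomega}), $\JJ(\lambda_n)\to\JJ(\lambda)$, so $\|u_n\|_p$ is bounded; moreover, as $\lambda_n$ is eventually bounded away from $-\lambda_\Omega$, the Nehari identity $\|u_n\|_p^p=\|\nabla u_n\|_2^2+\lambda_n\|u_n\|_2^2$ combined with $\|\nabla u_n\|_2^2\ge\lambda_\Omega\|u_n\|_2^2$ yields $\|\nabla u_n\|_2^2\le C\|u_n\|_p^p$, hence boundedness of $(u_n)_n$ in $H_0^1(\Omega)$. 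I would then pass to a subsequence with $u_n\rightharpoonup u$ in $H_0^1(\Omega)$, $u_n\to u$ in $L^q_{loc}(\Omega)$ for every $q\in[2,2^*)$ and a.e. in $\Omega$.

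The next step is to rule out $u\equiv0$. The only obstacle here is that $u_n$ lives on $\NN_{\lambda_n}$ and not on $\NL$, so Lemma \ref{weakzero} cannot be invoked verbatim; I would bridge this by rescaling onto $\NL$. Setting $w_n:=\sigma_\lambda(u_n)u_n\in\NL$ and using the Nehari identity at level $\lambda_n$ together with the uniform bound $\|u_n\|_2^2/\|u_n\|_p^p\le(\lambda_n+\lambda_\Omega)^{-1}$, one finds $\sigma_\lambda(u_n)^{p-2}=1+(\lambda-\lambda_n)\|u_n\|_2^2/\|u_n\|_p^p=1+o(1)$, hence $\sigma_\lambda(u_n)\to1$. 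If $u$ vanished, then $w_n\rightharpoonup0$ in $H_0^1(\Omega)$ and Lemma \ref{weakzero} would give $\liminf_n J_\lambda(w_n)\ge\JJ^\infty(\lambda)$; but $J_\lambda(w_n)=\sigma_\lambda(u_n)^pJ_{\lambda_n}(u_n)=(1+o(1))\JJ(\lambda_n)\to\JJ(\lambda)$, contradicting the strict gap $\JJ(\lambda)<\JJ^\infty(\lambda)$ of \eqref{ineq}. Therefore $u\not\equiv0$.

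With $u\neq0$ in hand, I would upgrade the convergence to strong $L^2$ convergence by reproducing the Brezis--Lieb argument of Lemma \ref{weakstrong}, now carrying the frequency $\lambda_n$ in place of the fixed $\lambda$. Since $\lambda_n\to\lambda$ and the $L^2$, $L^p$ and gradient norms are all bounded, every occurrence of $\lambda_n$ may be replaced by $\lambda$ up to an $o(1)$ error; the splitting then produces $v_n:=\sigma_\lambda(u_n-u)(u_n-u)\in\NL$ with $v_n\rightharpoonup0$ and $\sigma_\lambda(u_n-u)\le1+o(1)$, and Lemma \ref{weakzero} combined with $\kappa\|u_n\|_p^p\to\JJ(\lambda)<\JJ^\infty(\lambda)$ forces $u_n\to u$ in $L^2(\Omega)$ (otherwise the very same chain of inequalities would yield $\JJ(\lambda)\ge\JJ^\infty(\lambda)$). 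By \eqref{GN} this also gives $u_n\to u$ in $L^p(\Omega)$.

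It then remains to identify the limit, exactly as in the proof of Theorem \ref{exaction}: weak lower semicontinuity (and $\lambda_n\to\lambda$) give $\sigma_\lambda(u)\le1$, while $\sigma_\lambda(u)<1$ is excluded because it would yield $\JJ(\lambda)\le J_\lambda(\sigma_\lambda(u)u)=\sigma_\lambda(u)^p\kappa\|u\|_p^p=\sigma_\lambda(u)^p\JJ(\lambda)<\JJ(\lambda)$. Hence $\sigma_\lambda(u)=1$, i.e. $u\in\NL$, and $J_\lambda(u)=\kappa\|u\|_p^p=\lim_n\kappa\|u_n\|_p^p=\JJ(\lambda)$, so $u$ is an action ground state of frequency $\lambda$; since $u_n\to u$ in $L^2(\Omega)$, its mass is $\tfrac12\|u\|_2^2=\lim_n\mu_n=\mu$, whence $\mu\in Q(\lambda)\subseteq\overline{Q(\lambda)}$, which is Assumption \ref{C}. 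The main obstacle throughout is precisely the $n$--dependence of the frequency: the concentration--compactness Lemmas \ref{weakzero}--\ref{weakstrong}, stated for a single frequency, must be accessed by rescaling onto $\NL$ (legitimate because $\sigma_\lambda(u_n)\to1$) and by absorbing the $(\lambda-\lambda_n)$ terms into $o(1)$, which is justified by the continuity of $\JJ$ and by the strict inequality \eqref{ineq}.
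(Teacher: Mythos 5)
Your proposal is correct and follows essentially the same route as the paper: rescale the ground states $u_n\in\mathcal{N}_{\lambda_n}$ onto $\NL$ via $\sigma_\lambda(u_n)=1+o(1)$, use the continuity of $\JJ$ to see that $\sigma_\lambda(u_n)u_n$ is a minimizing sequence for $J_\lambda$, rule out vanishing by Lemma \ref{weakzero} and the strict gap \eqref{ineq}, obtain strong $L^2$ (hence $L^p$) convergence via Lemma \ref{weakstrong}, and identify the limit as in Theorem \ref{exaction}. The only cosmetic difference is that you propose to rerun the Brezis--Lieb argument with the varying frequency $\lambda_n$, whereas the paper applies Lemma \ref{weakstrong} verbatim to the already-rescaled minimizing sequence in $\NL$; both are legitimate.
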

\begin{proof} 
The proof is analogous to that of Proposition \ref{compact}, the only difference being that we cannot rely on Sobolev embeddings to obtain strong compactness in $L^p(\Omega)$. Fix $\lambda >-\lambda_\Omega$, let $\lambda_n \to \lambda$ as $n \to \infty$ and $\mu_n \in Q(\lambda_n)$ be such that $\mu_n \to \mu$. As before, take an action ground state $u_n \in \NN_{\lambda_n}$ with $\|u_n\|_2^2 =2 \mu_n$ for every $n$, and let $u\in H_0^1(\Omega)$ be its weak limit in $H_0^1(\Omega)$ as $n\to+\infty$, so that $u_n\to u$ in $L_{loc}^q(\Omega)$, for every $q \in [2, 2^*)$. Since
\[
\sigma_\lambda(u_n) = \left(1 +(\lambda-\lambda_n)\frac{\|u_n\|_2^2}{\|u_n\|_p^p} \right)^{\frac1{p-2}} =1+o(1)
\]
as $n \to \infty$, by the continuity of $\JJ$ we have
\[
J_\lambda(\sigma_\lambda(u_n) u_n) = (1+o(1)) \kappa \|u_n\|_p^p= (1+o(1))J_{\lambda_n}(u_n) = (1+o(1))\JJ(\lambda_n) \to \JJ(\lambda),
\]
i.e. $\sigma_\lambda(u_n) u_n \in \NL$ is a minimizing sequence for $J_\lambda$.  Also, $\sigma_\lambda(u_n) u_n \rightharpoonup u$ in $H_0^1(\Omega)$. By Lemma \ref{weakzero}, $u$ cannot vanish identically (since we would have $\JJ(\lambda) \ge \JJ^\infty(\lambda)$), and by Lemma \ref{weakstrong}, $\sigma_\lambda(u_n) u_n$, and hence $u_n$, converges strongly to $u$ in $L^2(\Omega)$, and then in $L^p(\Omega)$ by \eqref{GN}. As in the proof of Theorem \ref{exaction}, this shows that $u$ is an action ground state in $\NL$ and, arguing as in the last part of the proof of Proposition \ref{compact}, we conclude.
\end{proof} 

\begin{remark}
\label{rem:C}
Note that, both for bounded domains and for the unbounded set above, one can easily refine the previous arguments to show that the set of action ground states in $\NL$ is strongly compact in $H_0^1(\Omega)$. It is clear that compactness for every $\lambda$ guarantees the validity of Assumption \ref{C}. However, there are cases where one can obtain Assumption \ref{C} even though the set of action ground states is not compact. For example, if $\Omega = \R^N$, then the set of action ground states in $\NL$ is the family of solitons, which is not compact. Nevertheless, it is immediate to see (and well known) that Assumption \ref{C}  holds. Another example along these lines is the cylinder $\Sigma$.
	
\end{remark}

\section*{Acknowledgements}
The work has been partially supported by the MIUR project ``Dipartimenti di Eccellenza 2018--2022" (CUP E11G18000350001) and by the INDAM-GNAMPA project 2020 ``Modelli differenziali alle derivate parziali per fenomeni di interazione".

\end{document}